\theoremstyle{plain}
\newtheorem{theorem}{Theorem}[section]
\newtheorem{corollary}[theorem]{Corollary}
\newtheorem{proposition}[theorem]{Proposition}
\newtheorem{lemma}[theorem]{Lemma}
\theoremstyle{definition}
\newtheorem{definition}[theorem]{Definition}
\newtheorem{remark}[theorem]{Remark}
\numberwithin{equation}{section}
\def \zero {{\boldsymbol 0}}
\def \ee {{\boldsymbol e}}
\def \Ck {C_\ell}
\def \R {\mathbb{R}}
\def \LL {\mathscr{L}}
\def \OO {\mathcal{O}}
\def \RR {\mathcal{R}}
\def \QQ {\mathcal{Q}}
\def \PP {\mathcal{P}}
\def \aa {\mathbf{a}}
\def \bb {\mathbf{b}}
\def \cc {\mathbf{c}}
\def \hh {\mathbf{h}}
\def \rr {{\tilde{r}}}
\def \zz {{\tilde{z}}}
\def \tt {{\tilde{t}}}
\def \xx {\tilde{x}}
\def \vv {\tilde{v}}
\begin{document}
\title[Sharp boundary regularity for hypoelliptic kinetic equations]
{Sharp boundary regularity properties for hypoelliptic kinetic equations}	
\author{Yuzhe Zhu} 
\date{September 1, 2025}
\thanks{The author thanks Cl\'ement Mouhot and Luis Silvestre for helpful discussions.}

\begin{abstract}
We establish sharp boundary regularity results for solutions to kinetic Fokker-Planck equations under prescribed inflow boundary conditions, providing precise quantification of the boundary hypoelliptic regularization effect. For equations with rough coefficients, we characterize the behaviours for solutions on grazing and incoming boundaries. In particular, in the absence of influxes and sources, an explicit exponential infinite-order vanishing estimate is derived near incoming boundaries. When the coefficients are regular, we obtained the optimal H\"older regularity on grazing boundaries and general Schauder-type estimates away from them. 
\end{abstract}
\maketitle

\hypersetup{bookmarksdepth=2}
\setcounter{tocdepth}{2}
\tableofcontents

\section{Introduction}
We study the kinetic Fokker-Planck equation in nondivergence form, given by
\begin{align}\label{KFP}
\begin{aligned}
\partial_tf+v\cdot\nabla_xf=A:D^2_v f+B\cdot\nabla_vf+S. 
\end{aligned}
\end{align}
The unknown $f=f(z)$ is defined for $z:=(t,x,v)\in(T_0,T]\times\Omega\times\R^d$ with $T_0<T$ and a bounded domain $\Omega\subset\R^d$. The equation is supplemented with the prescribed inflow boundary condition
\begin{align*}
f|_{\Sigma_-}=f_b, 
\end{align*}
where the incoming boundary set $\Sigma_-$ is defined below and $f_b=f_b(z)$ is a given function. The coefficients of the equation consist of a $d\times d$ real symmetric matrix $A=A(z)$ and a $d$-dimensional vector $B=B(z)$, and the source term $S=S(z)$ is a given scalar function. We assume the existence of constants $\Lambda>\lambda>0$ such that for any $z\in(T_0,T]\times\Omega\times\R^d$, 
\begin{align}\label{Elliptic}
\left\{\begin{aligned}
\ &\lambda I_d\le A(z)\le\Lambda I_d, \\
\ &\!|B(z)|\le\Lambda\left(1+|v|^2\right), \\
\ &\;\! S(z) {\ \, \rm essentially\ bounded}. 
\end{aligned}\right. 
\end{align}	
Let $n_x\in\R^d$ denote the unit outward normal vector at $x \in \partial\Omega$. We decompose the phase boundary 
\begin{align*}
\Sigma:=(T_0,T]\times\partial\Omega\times\R^d
\end{align*}
of $(T_0,T]\times\Omega\times\R^d$ into three subsets, namely, 
\begin{align*}
\begin{alignedat}{2}
{\rm Outgoing\ boundary}&\quad \Sigma_+&\,:=\,&\left\{(t,x,v)\in\Sigma :\, n_x\cdot v>0\right\},\\
{\rm Incoming\ boundary}&\quad \Sigma_-&\,:=\,&\left\{(t,x,v)\in\Sigma :\, n_x\cdot v<0\right\},\\
{\rm Grazing\ boundary}&\quad \Sigma_0&\,:=\,&\left\{(t,x,v)\in\Sigma :\, n_x\cdot v=0\right\}.
\end{alignedat}
\end{align*}

For convenience in stating the main results, we define the local domain $G_r(z_0)$ by restricting $Q_r(z_0)$ to the spatial domain $\Omega$,
\begin{align*}
G_r(z_0):=\{(t,x,v)\in Q_r(z_0):\,x\in\overline{\Omega}\}, 
\end{align*}
where $Q_r(z_0)$ is the kinetic cylinder centered at $z_0=(t_0,x_0,v_0)\in\R^{1+2d}$ with radius $r>0$,
\begin{align*}
Q_r(z_{0}):=\left\{ (t,x,v): \, t_{0} - r^{2} < t \le t_{0}, \, |x - x_{0} - (t - t_{0})v_{0}| < r^{3},\, |v - v_{0}| < r\right\}.
\end{align*}

\subsection{Main results}
Our objective is to analyse the boundary regularity of solutions to the hypoelliptic kinetic equation of the form \eqref{KFP}, aiming for sharp results. This equation features a combination of transport that mixes the variables $x,v$ with diffusion acting only in $v$, formulated in non-divergence form, under the prescribed inflow boundary condition. Given its structure, we work with classical solutions, meaning the solution $f$, for which $(\partial_t+v\cdot\nabla_x)f$ and $D_v^2f$ are continuous in the interior, that satisfies \eqref{KFP} pointwise and matches the prescribed boundary data continuously. 

The regularity of solutions is intimately linked to the regularity of the coefficients and the boundary information. In the context of equations with merely bounded and measurable coefficients, we establish H\"older regularity for solutions on the grazing and incoming boundaries, $\Sigma_0\cup\Sigma_-$. 

\begin{theorem}[H\"older estimate on $\Sigma_0\cup\Sigma_-$]\label{rough-holder}
Let $\partial\Omega\in C^{1,1}$, $z_0\in\Sigma_0\cup\Sigma_-$, and $f_b\in C^{\alpha_b}(\Sigma_-)$ for some $\alpha_b\in(0,1)$. Assume that $f$ is a solution of \eqref{KFP} in $G_2(z_0)$ subject to \eqref{Elliptic} with $f|_{\Sigma_-}=f_b$. Then, there exists $\alpha\in(0,1)$ depending only on $d,\alpha_b,\lambda,\Lambda,\Omega$ such that for any $z\in G_1(z_0)$, 
\begin{align*}
|f(z_0)-f(z)|\le C|z_0-z|^\alpha\big(\|f_b\|_{C^{\alpha_b}(Q_2(z_0)\cap\Sigma_-)} + \|f\|_{L^\infty(G_2(z_0))}+ \|S\|_{L^\infty(G_2(z_0))}\big), 
\end{align*}
where the constant $C>0$ depends only on $d,\alpha_b,\lambda,\Lambda,\Omega$ and $|z_0|$. 
\end{theorem}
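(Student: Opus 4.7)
\emph{Proof proposal.} The plan is to establish a quantitative oscillation decay on kinetic cylinders centered at $z_0$ and iterate dyadically. Using $\partial\Omega\in C^{1,1}$, I would first straighten the boundary by a local $C^{1,1}$ diffeomorphism, reducing to the half-space model $\Omega=\{x_1>0\}$ with $n_{x_0}=-e_1$; under this change of variables \eqref{KFP} retains its form with coefficients satisfying \eqref{Elliptic} up to constants depending on $\Omega$ and $|z_0|$ (the $|v|^2$ growth of $B$ contributes the $|z_0|$-dependence once the flattening is localised around $v_0$).

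For the strict inflow case $z_0\in\Sigma_-$ (so $v_{0,1}<0$), I would compare $f$ with barriers built from $f_b$ and $f_b(z_0)$. Because $v$ is strictly incoming, every backward characteristic starting from $G_r(z_0)$ crosses $\Sigma_-$ within time $O(r)$ at a point where $f=f_b(z_0)+O(r^{\alpha_b})$. Combined with interior hypoelliptic bounds to absorb the diffusion and the source, this yields the Hölder estimate directly, without needing the full oscillation iteration.

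The grazing case $z_0\in\Sigma_0$ (so $v_{0,1}=0$) is the substance of the proof: characteristics are tangent to $\partial\Omega$ and cannot be expected to reach $\Sigma_-$ quickly by transport alone. I would instead establish
\[
\underset{G_{\rho r}(z_0)}{\mathrm{osc}}\,f \;\le\; \theta\,\underset{G_r(z_0)}{\mathrm{osc}}\,f \;+\; Cr^{\alpha_b}\bigl(\|f_b\|_{C^{\alpha_b}}+\|f\|_\infty+\|S\|_\infty\bigr)
\]
for fixed $\rho,\theta\in(0,1)$. The mechanism is a De Giorgi--type measure-to-pointwise estimate for subsolutions of \eqref{KFP} with inflow boundary: although $v_0$ is tangential, a uniformly positive fraction of the velocities in $Q_r(z_0)$ satisfy $v_1\le -cr$ and are therefore strictly incoming, so $f_b$ is prescribed on a substantial part of $\Sigma_-\cap Q_r(z_0)$. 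The hypoelliptic mixing between tangential transport and velocity diffusion on the time scale $r^2$ propagates this boundary information into the bulk, producing the contraction $\theta<1$ whenever the oscillation of $f_b$ across $\Sigma_-\cap Q_r(z_0)$ is small relative to that of $f$ on $G_r(z_0)$. Iterating along a dyadic sequence of cylinders $G_{\rho^k r}(z_0)$ then yields the stated Hölder estimate, with $\alpha$ determined jointly by $\alpha_b$ and $\theta$.

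The main obstacle is the boundary De Giorgi lemma at grazing points. The kinetic cylinder and the half-space interact in a scale-anisotropic way when $v_{0,1}=0$: the $x$-window of size $r^3$, the $v$-window of size $r$, and the tangential characteristic displacement of comparable size $r^3$ conspire to make the effective boundary contribution subtle. Quantifying that the good set, where $f$ is comparable to $f_b(z_0)$, occupies a uniform measure fraction of $G_r(z_0)$ --- independent of $r$ and of the tangential direction of $v_0$ along $\partial\Omega$ --- is the crux of the argument, and most likely requires a covering lemma compatible with both the Galilean kinetic scaling and the half-space geometry.
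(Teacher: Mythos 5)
Your boundary flattening step and the decision to organise the argument around a dyadic oscillation decay both match the paper. The key structural gap is in your mechanism for the grazing case: you propose a ``De Giorgi--type measure-to-pointwise estimate for subsolutions of \eqref{KFP},'' but the equation is in \emph{nondivergence} form, and no such estimate is available in this setting. The paper itself flags this explicitly in the introduction --- the lack of a hypoelliptic analogue of Krylov--Safonov theory is precisely why Theorem~\ref{rough-holder} is stated only on $\Sigma_0\cup\Sigma_-$ and not on $\Sigma_+$ or in the interior. The measure-to-pointwise machinery you invoke (De Giorgi / Moser) lives in the divergence-form world \cite{GIMV,Sil,Zhu}; for a nondivergence operator with merely bounded measurable $A$, the only quantitative tool at hand is the maximum principle, so one must produce an explicit supersolution.

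This is exactly what the paper does. In place of a measure estimate, the proof constructs a barrier $\varphi(\rho-\hh t)$ (Lemmas~\ref{hypodist1}, \ref{hypodist2}, \ref{barrier-g}) built from a quasi-distance $\rho$ adapted to the kinetic geometry near the half-space boundary. The supersolution inequality $\LL(\varphi(\rho-\hh t))\gtrsim \rr^{-2/3}$ in the region $\PP$ is obtained by combining two effects: the transport coercivity $\eta_d(\aa X_d-\bb V_d)\gtrsim\aa\rr|\vv_d|$ (Lemma~\ref{hypodist2}\ref{vr1}), which degenerates as $|\vv_d|\to 0$, and the ellipticity of $A$, which enters through a quadratic form $Q(X_d,V_d)$ that is positive definite in $\PP$ provided the exponent $m$ in $\varphi(\rho)\sim\rho^{-m}$ is chosen large depending on $\lambda$. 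This combination is what replaces your ``hypoelliptic mixing propagates boundary information into the bulk'' heuristic, and it is the step your proposal leaves as a gap rather than closes. Feeding the barrier into the maximum principle (Lemma~\ref{max-su}) gives the oscillation decay of Proposition~\ref{osc-g}, and iteration yields the theorem.

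Two smaller points. First, the paper does not split into $\Sigma_-$ and $\Sigma_0$ cases the way you do: Proposition~\ref{osc-g} centres the barrier at an auxiliary inflow point with $\vv_d\approx -\rr^{1/3}$ and verifies that the grazing point lies inside the barrier region, thereby treating both cases in one sweep. The stronger Lipschitz bound of Proposition~\ref{phase-prop} at strictly incoming points is independent of $\lambda$, but is not what is needed for the Hölder theorem. Second, your backward-characteristics argument for the strict inflow case is plausible in spirit but, as stated, does not control the dispersion introduced by the velocity diffusion along the trajectory; the paper handles this again via a barrier, and you would need a comparable quantitative tool rather than ``interior hypoelliptic bounds to absorb the diffusion.''
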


In the absence of influxes, we obtain a gradient estimate for solutions to \eqref{KFP} on the incoming boundary $\Sigma_-$. When the source term also vanishes, we further derive an explicit exponential estimate demonstrating infinite-order vanishing behaviour for solutions as they approach $\Sigma_-$ from the inside. A distinguishing feature of these asymptotic estimates is their independence from the lower bound of the ellipticity constant $\lambda$ for the diffusion matrix $A$ in \eqref{Elliptic}. 

\begin{theorem}[Asymptotic estimates on $\Sigma_-$]\label{rough-asmp}
Let $\partial\Omega\in C^{1,1}$ and $z_0=(t_0,x_0,v_0)\in\Sigma_-$. Assume that $f$ is a solution of \eqref{KFP} subject to \eqref{Elliptic} in $G_2(z_0)$ with $f|_{\Sigma_-}=0$.
\begin{enumerate}[label=(\roman*), leftmargin=*]
\item\label{rough-asmp1}(Gradient estimate) For any $z\in G_1(z_0)$, we have 
\begin{align*}
|f(z)|\le \frac{C_\star|z-z_0|}{\,|v_0\cdot n_{x_0}|^3} \left(\|f\|_{L^\infty(G_2(z_0))}+ \|S\|_{L^\infty(G_2(z_0))}\right). 
\end{align*}

\item\label{rough-asmp2}(Infinite-order vanishing)
If additionally $S=0$ in $G_2(z_0)$, then for any $x\in\R^d$ such that $(t_0,x,v_0)\in G_1(z_0)$ and $0<{\rm distance\;\!}(x,\partial\Omega)\le |v_0\cdot n_{x_0}|^3$, we have
\begin{align*}
|f(t_0,x,v_0)|\le\|f\|_{L^\infty(G_2(z_0))}\;\!\exp\bigg(1-\frac{\left|n_{x_0}\cdot v_0\right|^3}{C_\star\,{\rm distance\;\!}(x,\partial\Omega)}\bigg). 
\end{align*}
\end{enumerate}
Here the constant $C_\star>0$ depends only on $d,\Lambda,\Omega$ and $|z_0|$, independent of $\lambda$. 
\end{theorem}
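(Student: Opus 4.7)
The plan is to establish both estimates via explicit comparison barriers tailored to the strict incoming direction of $v_0$, i.e., the positive transit speed $a := |v_0 \cdot n_{x_0}|$. The $\lambda$-independence will come from choosing barriers depending only on $(t,x)$—never on $v$—so that $A : D_v^2 \Phi$ vanishes and the supersolution status is decided purely by the transport operator $\partial_t + v\cdot\nabla_x$, whose sign for $v$ near $v_0$ is controlled by $a$. As a preliminary step I would flatten $\partial\Omega$ locally by a $C^{1,1}$ diffeomorphism, reducing to $\Omega = \{x_d > 0\}$ with $n_{x_0} = -e_d$ and $v_{0,d} = a > 0$. Under the associated linear transformation of $v$, the equation preserves its form with modified bounded coefficients still obeying \eqref{Elliptic}.

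\textbf{Part (i).} For the gradient estimate, I would use a linear-growth barrier
\[
\Phi_1(z) = K\big(\|f\|_{L^\infty} + \|S\|_{L^\infty}\big) \bigg[\frac{x_d}{a^3} + \frac{M_1(t_0-t)}{a^2} + \frac{M_2|v-v_0|^2}{a^2}\bigg]
\]
on a sub-region $\mathcal R \subset G_2(z_0) \cap \{v_d \ge a/2\}$. Direct computation yields $\LL[x_d/a^3] = v_d/a^3 \ge 1/(2a^2)$, while the time- and velocity-quadratic terms contribute amounts bounded in terms of $\Lambda$ only. Balancing $K, M_1, M_2$ makes $\Phi_1$ a classical supersolution with $\LL\Phi_1 \ge K\|S\|_{L^\infty}$, while simultaneously $\Phi_1 \ge K\|f\|_{L^\infty}$ on each non-incoming face of $\partial\mathcal R$ (time-bottom, $x$-lateral, and the velocity face $\{v_d = a/2\}$); on $\Sigma_-\cap\mathcal R$ one has $\Phi_1 \ge 0 = f$ automatically. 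The comparison principle then yields $|f| \le \Phi_1$, and the Euclidean bounds $x_d\le|z-z_0|$, $|t-t_0|\le|z-z_0|$, $|v-v_0|^2\le|z-z_0|$ on $G_1(z_0)$ give $\Phi_1(z) \lesssim |z-z_0|\,a^{-3}(\|f\|_{L^\infty} + \|S\|_{L^\infty})$, which is the claim.

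\textbf{Part (ii).} With $S \equiv 0$, I would use the exponential barrier
\[
\Phi_2(z) = \|f\|_{L^\infty} \exp\!\bigg(1 - \frac{\alpha}{x_d + \beta(t_0-t)}\bigg), \qquad \alpha = \frac{a^3}{C_\star},\ \beta \in (0, a/2).
\]
Since $\Phi_2$ depends only on $(t,x_d)$, one checks directly
\[
\LL \Phi_2 = \frac{\alpha(v_d - \beta)}{(x_d + \beta(t_0 - t))^2}\,\Phi_2 \ge 0 \quad \text{on}\ \{v_d \ge \beta\},
\]
with no trace of $A$ or $B$. In a local region $\mathcal R' \subset G_2(z_0) \cap \{v_d \ge \beta\}$, the boundary checks reduce to: $\Phi_2 = 0 = f$ on $\Sigma_-\cap\mathcal R'$; $\Phi_2 \ge \|f\|_{L^\infty}$ on the time-bottom and outer $x$-face once the radius is chosen so $\beta r^2 \ge \alpha$; and on the velocity face $\{v_d = \beta\}$, where $\Phi_2$ is not automatically large, I would use Part (i) to control $|f|$. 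The comparison principle then gives $|f| \le \Phi_2$ throughout $\mathcal R'$; evaluating at $(t_0, x, v_0)$ with $\text{dist}(x,\partial\Omega) = x_d$ yields the stated exponential vanishing.

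\textbf{Main obstacle.} The principal technical difficulty will be the velocity face of the localization region: since the barriers are $v$-independent, they cannot by themselves dominate $\|f\|_{L^\infty}$ on $\{v_d = a/2\}$ or $\{v_d = \beta\}$ when $x_d$ is small. Resolving this while preserving $\lambda$-independence requires either a bootstrap combining Parts (i) and (ii)—using the gradient estimate as input to the exponential barrier's velocity-face check—or adding to $\Phi_2$ a controlled $v$-correction, for instance a quadratic cutoff in $|v-v_0|^2$ whose Hessian contributes only $O(\Lambda)$ and can be absorbed into the positive transport gain. Tracking how the $C^{1,1}$ flattening contributes the $\Omega$-dependence to the final constant $C_\star$ is routine but adds bookkeeping.
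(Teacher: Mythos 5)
Your high-level strategy — flatten the boundary, build barriers whose supersolution status is controlled by the transport operator so the estimate never sees $\lambda$, and localize near $z_0$ with a comparison argument — is the same philosophy as the paper's (Lemmas~\ref{hypodist1}–\ref{barrier}, Propositions~\ref{phase-prop} and \ref{barrier-exp}). But the specific barriers you propose leave two real gaps, of which the second is fatal as sketched.

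For part (i), the barrier $\Phi_1$ has no mechanism to control the tangential variables $x'$ and $v'$. On the lateral boundary of the localization cylinder one can have $|x'|\approx R^3$ while $x_d$, $|t|$ and $|v-v_0|$ are all tiny, and at such points $\Phi_1$ is small; the assertion ``$\Phi_1 \ge K\|f\|_{L^\infty}$ on the $x$-lateral face'' is therefore not justified by the barrier you wrote down. The paper's quasi-distance $\rho$ in Lemma~\ref{hypodist1} contains the terms $\aa|\kappa x'|^2+\cc|v'|^2$ precisely to dominate these faces. This gap is in principle fixable by adding weighted $|x'|^2$, $|v'|^2$ terms (and noting the theorem is trivial once $|z-z_0|\gtrsim a^3$), but it must be done, and the bookkeeping is essentially what the paper's quadratic form $\rho^2$ packages.

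For part (ii), the velocity face is a genuine obstruction and neither of the remedies you sketch closes it. The bootstrap with part~(i) loses: on $\{|v-v_0|\approx a/2\}$, part~(i) yields $|f|\lesssim (a/2)/a^3\cdot\|f\| = \|f\|/a^2$, which is weaker than the trivial $L^\infty$ bound once $a$ is small. The quadratic $v$-correction $M_2|v-v_0|^2\|f\|$ requires $M_2\gtrsim a^{-2}$ to dominate $\|f\|$ on the velocity face; but $\LL[M_2|v-v_0|^2]=-M_2\bigl(2\operatorname{tr}A+2B\cdot(v-v_0)\bigr)$ produces a constant loss of order $M_2\Lambda\gtrsim\Lambda/a^2$ that the transport gain of your $v$-independent $\Phi_2$ — which vanishes as $\Phi_2\to 0$ near the boundary — cannot absorb. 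Compensating instead with an added linear-in-$x_d$ gain destroys the exponential smallness you are after, reducing the bound to the gradient estimate of part (i). The paper's resolution is the ODE $\tau^2\Phi''-\Theta\,\Phi'=0$ of Lemma~\ref{ode}: the barrier $\Phi\circ\rho_t^2$ \emph{is} $v$-dependent and incurs a $\Lambda$-weighted Hessian loss, but the ODE is tuned so that this loss exactly cancels the transport gain to leading order (Lemma~\ref{barrier-ss}) while $\Phi$ still decays like $\exp(-\Theta/(8\rho_0^2))$. Meanwhile the level set $\{\rho_t=3\rho_0\}$ serves as the entire outer boundary of the localization region, so there is no separate velocity face to control: the cross-term $-2\bb X_d^t V_d$ and the shift by $\xi_d$ are precisely what keep $|X_d^t|\ge\rr$ throughout and couple the $x_d$ and $v_d$ directions. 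Your $v$-independent ansatz sidesteps the Hessian loss but thereby loses velocity-face control, and the two requirements cannot be decoupled the way the proposal hopes.
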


Achieving higher-order regularity for solutions requires a certain amount of regularity in the coefficients and sources, as well as the boundary information. A standard technique for simplifying boundary value problems involves reducing suitably regular prescribed boundary data to zero influx by subtraction and subsequent modification of the source term. We focus on this regime to show the applicability of general Schauder-type estimates to the hypoelliptic kinetic equation \eqref{KFP} away from the grazing set $\Sigma_0$. For clarity, the precise definition of kinetic H\"older spaces $\Ck^{k+\alpha}$ is provided in Subsection~\ref{k-holder} below. 

\begin{theorem}[Higher-order regularity away from $\Sigma_0$]\label{regular-Schauder}
Assume that $z_0\in\R^{1+2d}$ satisfies $G_{2r}(z_0)\cap\Sigma_0=\emptyset$ for some $r\in(0,1]$. Let $f$ be a solution of \eqref{KFP} subject to \eqref{Elliptic} in $G_{2r}(z_0)$ with $f|_{\Sigma_-}=0$. 
\begin{enumerate}[label=(\roman*), leftmargin=*]
\item\label{regular-Schauder-h}(Schauder estimate) If $\partial\Omega\in\Ck^{6+k+\alpha}$ and $A,B,S\in\Ck^{k+\alpha}(G_{2r}(z_0))$ for some $k\in\mathbb{N}$ and $\alpha\in(0,1)$, then there exists some constant $C_k>0$ depending only on $d,\lambda,\Lambda,\Omega,|z_0|$, $k,\alpha$ and $\Ck^{k+\alpha}(G_{2r}(z_0))$-norms of the coefficients $A,B$ such that
\begin{align*}
r^{k+2+\alpha}\,[f]_{\Ck^{k+2+\alpha}(G_r(z_0))}
\le C_k\big(\|f\|_{L^\infty(G_{2r}(z_0))} + \|S\|_{\Ck^{k+\alpha}(G_{2r}(z_0))}\big).
\end{align*}

\item\label{regular-Schauder-CN}(Cordes-Nirenberg estimate) Let $\partial\Omega\in C^{1,1}$ and $\alpha\in(0,1)$. There exist some constants $\varrho,R_0\in(0,1]$ depending only on $d,\lambda,\Lambda,\Omega,\alpha$ such that if $A$ satisfies 
\begin{align*}
\sup\nolimits_{\{z,z':\,z\in G_{R_0}(z')\subset G_{2r}(z_0)\}}|A(z)-A(z')|\le \varrho, 
\end{align*}
then for some $C>0$ depending only on $d,\lambda,\Lambda,\Omega,|z_0|,\alpha$, we have 
\begin{align*}
r^{1+\alpha}\,[f]_{\Ck^{1+\alpha}(G_r(z_0))}
\le C\big( \|f\|_{L^\infty(G_{2r}(z_0))} + \|S\|_{L^\infty(G_{2r}(z_0))}\big).
\end{align*}
\end{enumerate}
\end{theorem}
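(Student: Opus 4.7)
The plan is to reduce both estimates to established interior / half-space kinetic Schauder and Cordes--Nirenberg estimates via a localization and boundary flattening argument. Since $G_{2r}(z_0)\cap\Sigma_0=\emptyset$ and the map $x\mapsto n_x$ is continuous on $\partial\Omega$, the quantity $n_x\cdot v$ keeps a definite sign throughout the portion of $\Sigma$ that lies in $G_{2r}(z_0)$; so exactly one of the following holds: (a) $G_{2r}(z_0)$ lies strictly in the interior, (b) the boundary piece is entirely outgoing, (c) the boundary piece is entirely incoming. Case (a) follows from the standard interior kinetic Schauder estimate for \eqref{KFP}. For (b) and (c) I would straighten $\partial\Omega$ and then appeal to either the interior estimate (after an extension) or to the half-space version with zero incoming data.

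The first step is the change of variables. I would choose a local diffeomorphism $\Phi$ mapping $\Omega$ to a half-space $\{\xx_1>0\}$ and lift it to the phase space by the Galilean-respecting map $(t,x,v)\mapsto(t,\Phi(x),D\Phi(x)\,v)$. Under this lift, the transport operator $\partial_t+v\cdot\nabla_x$ transforms into $\partial_t+\vv\cdot\nabla_{\xx}$ plus a drift of the form $(D^2\Phi\cdot\vv\otimes\vv)\cdot\nabla_{\vv}$, while the principal diffusion becomes $(D\Phi)A(D\Phi)^T:D^2_{\vv}$, which remains uniformly elliptic with the same constants up to a harmless factor. The new $A$ and $B$ inherit kinetic H\"older regularity from the old ones, but the drift contribution involves $D^2\Phi$ acting on $\vv\otimes\vv$, which has kinetic order $2$ in $v$ and therefore costs $2\cdot 3=6$ orders of regularity of $\partial\Omega$ measured in $x$; this matches the threshold $C_\ell^{6+k+\alpha}$ in the statement (and $C^{1,1}$ for part (ii), which is essentially two $x$-derivatives).

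Next I would treat the two boundary cases. In the outgoing case $n_{x_0}\cdot v_0>0$, no boundary condition is imposed by the characteristics, so I would extend $f$ across $\{\xx_1=0\}$ by solving the Cauchy problem in the negative $\xx_1$ direction with even extensions of $A,B,S$; this reduces the situation to the interior estimate on a full kinetic cylinder $Q_r$. In the incoming case $n_{x_0}\cdot v_0<0$, the hypothesis $f|_{\Sigma_-}=0$ is preserved under the lift, so I would apply the kinetic half-space Schauder estimate with zero boundary data on $\{\xx_1=0,\;\vv_1>0\}$ (the natural incoming face after flattening). For part (ii), after the same flattening, I would run the standard perturbative Cordes--Nirenberg iteration: freeze $A$ at $z_0$ to obtain a model operator with constant matrix; compare $f$ with the solution $h$ of the frozen problem on each scale $G_\rho$, using the available $C_\ell^{1+\alpha}$ regularity of $h$; iterate over dyadic kinetic scales, with the smallness hypothesis $|A(z)-A(z')|\le\varrho$ absorbing the perturbative error in a geometric sum.

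The main obstacle will be the bookkeeping attached to the boundary flattening. The kinetic scaling assigns weight $1$ to $v$-directions and weight $3$ to $x$-directions, and the lifted diffeomorphism mixes the two; tracking the precise orders of derivatives of $\Phi$ that appear in the flattened coefficients, and checking that the incoming face in the flattened coordinates is indeed compatible with the half-space model used in the kinetic Schauder and Cordes--Nirenberg theorems, is where the technical work concentrates. In particular, one must verify that the $6$ in $C_\ell^{6+k+\alpha}$ is both necessary (to cover the $v^{\otimes 2}$ factor in the transformed drift at the top-order level $k+2+\alpha$) and sufficient (by direct computation of the transformed coefficients' kinetic norms). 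Once this is in place, the two estimates follow from their flat analogues with constants absorbing the norms of $\Phi$ and $D\Phi^{-1}$, which depend only on $\Omega$ and $|z_0|$.
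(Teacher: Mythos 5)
Your proposal correctly identifies the boundary-flattening reduction, the role of the $v^{\otimes 2}\!:\!D^2\mathscr{P}$ term in dictating the $\Ck^{6+k+\alpha}$ regularity threshold for $\partial\Omega$, and the perturbative dyadic iteration for part~(ii); this matches the broad architecture of Propositions~\ref{schauder-bdy} and \ref{CN-bdy}. However, there are two genuine gaps. The ``kinetic half-space Schauder estimate with zero incoming data'' that you invoke as a black box does not pre-exist in the literature --- it is precisely what the theorem establishes. The missing ingredient is the gradient estimate for the frozen constant-coefficient model $\LL_0$ on the half-space with zero inflow data (Lemma~\ref{phase-grad2}, which rests on Corollary~\ref{phase-grad} and ultimately on the barrier Proposition~\ref{phase-prop} from Section~\ref{sec-rough}). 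Your phrase ``the available $\Ck^{1+\alpha}$ regularity of $h$'' presupposes exactly this, and without it the Safonov-type comparison has nothing to compare against. The iteration also uses the Giaquinta--Giusti absorption lemma (Lemma~\ref{technicallemma}) and a patching lemma (Lemma~\ref{patching}) to combine boundary and interior estimates, neither of which appears in your outline.

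Second, the treatment of the outgoing boundary via ``solving the Cauchy problem in the negative normal direction with even extensions of $A,B,S$'' is not well-defined for a kinetic Fokker--Planck equation: the spatial normal direction is not time-like, since the equation is degenerate in $x$ (no $D_x^2$ term) and first-order only along the characteristic field $\partial_t+v\cdot\nabla_x$. There is no Cauchy problem posed in $y_d$, and the natural reflection $(y_d,w_d)\mapsto(-y_d,-w_d)$ corresponds to specular --- not inflow --- boundary conditions, so neither route extends $f$ while preserving the equation. The paper's approach is simpler and avoids any extension: the kinetic maximum principle (Lemma~\ref{max-su}) is formulated on the kinetic boundary $\partial_{\rm kin} G$, which by construction excludes the outgoing face $\{(1,v)\cdot n_{t,x}\ge 0\}$, so Bernstein's method and the interior gradient estimates of \cite{IM} apply verbatim at outgoing boundary points, with no extension or reflection required.
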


The boundary regularization effect is fundamentally limited near the grazing set $\Sigma_0$, where the hypoelliptic mechanism, governed by the interplay between transport and diffusion, diminishes. Despite this, we prove the following optimal $\Ck^\frac{1}{2}$ H\"older regularity everywhere. Here the kinetic H\"older regularity, characterized by the exponent $\frac{1}{2}$, is locally equivalent to $C_{t,\;\!x,\;\!v}^{\frac{1}{4},\frac{1}{6},\frac{1}{2}}$ under the standard notion of anisotropic H\"older spaces. 

\begin{theorem}[Optimal H\"older regularity]\label{regular-optimal}
Let $\partial\Omega\in C^{1,1}$, $z_0\in\R^{1+2d}$, and $A\in\Ck^\alpha(G_2(z_0))$ for some $\alpha\in(0,1)$. Assume that $f$ is a solution of \eqref{KFP} subject to \eqref{Elliptic} in $G_2(z_0)$ with $f|_{\Sigma_-}=0$. Then, we have
\begin{align*}
[f]_{\Ck^{1/2}(G_1(z_0))}\le C\big(\|f\|_{L^\infty(G_2(z_0))}+\|S\|_{L^\infty(G_2(z_0))}\big), 
\end{align*}
where the constant $C>0$ depends only on $d,\lambda,\Lambda,\Omega,|z_0|,\alpha$ and $[A]_{\Ck^\alpha(G_2(z_0))}$.    
\end{theorem}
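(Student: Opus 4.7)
The plan is to establish a sharp pointwise bound at the grazing set and then promote it to the full seminorm estimate using the Cordes--Nirenberg estimate of Theorem~\ref{regular-Schauder}\ref{regular-Schauder-CN}. The first, and main, ingredient is
\begin{align*}
|f(z)-f(z_g)|\le C\,d_\ell(z,z_g)^{1/2}\,M,\qquad M:=\|f\|_{L^\infty(G_2(z_0))}+\|S\|_{L^\infty(G_2(z_0))},
\end{align*}
for every $z_g\in\Sigma_0\cap\overline{Q_1(z_0)}$ and every $z\in G_1(z_0)$ close to $z_g$, where $d_\ell$ is the kinetic quasi-distance underlying $\Ck^\alpha$ and $f(z_g)$ is the continuous extension from the interior supplied by Theorem~\ref{rough-holder}. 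Feeding this pointwise bound into Theorem~\ref{regular-Schauder}\ref{regular-Schauder-CN} as the $L^\infty$ input on cylinders of radius $\sim d_\ell(z,\Sigma_0)$ (with a $\Ck^\alpha$-sub-scale refinement of $A$'s oscillation) and invoking standard interpolation yields a weighted gradient estimate: for $z$ with $\delta:=d_\ell(z,\Sigma_0)>0$, the kinetic gradient of $f$ at $z$ is bounded by $C\delta^{-1/2}M$. The $\Ck^{1/2}$ seminorm is then obtained by a standard splitting applied to any $z_1,z_2\in G_1(z_0)$ with $\rho=d_\ell(z_1,z_2)$: if $d_\ell(z_i,\Sigma_0)\le 2\rho$ for some $i$, a grazing point $z_g$ within kinetic distance $\le 3\rho$ of both $z_i$ and the pointwise bound handle $|f(z_1)-f(z_2)|$; otherwise integrating the gradient bound along a kinetic path joining $z_1,z_2$ that stays at distance $\ge\rho$ from $\Sigma_0$ gives $|f(z_1)-f(z_2)|\le C\rho\cdot\rho^{-1/2}M$.

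\textbf{Barrier construction.} Fix $z_g\in\Sigma_0$. After straightening $\partial\Omega$ via a $C^{1,1}$-diffeomorphism and conjugating by a Galilean translation placing $z_g$ at the origin with $\Omega=\{x_1>0\}$ locally and $v_g=(0,v_g')$, the grazing locus becomes $\{x_1=0,\,v_1=O(|x'|)\}$ and the local $\Sigma_-$ is $\{x_1=0,\,v_1>O(|x'|)\}$. The candidate upper barrier for $f-f(z_g)$ is the linear combination
\begin{align*}
\phi(z):=M_1\bigl(-v_1+c_1x_1^{1/3}\bigr)_+^{1/2}+M_2\exp\!\Big(1-c_2\tfrac{(v_1)_+^3}{x_1}\Big)+M_3\,d_\ell(z,z_g)^2,
\end{align*}
where the first term encodes the anisotropic kinetic distance to $\Sigma_0$ and produces the $\tfrac12$-exponent in the outgoing-interior region ($v_1<0$), the second implements the infinite-order vanishing from Theorem~\ref{rough-asmp}\ref{rough-asmp2} to dominate $f$ in the incoming-interior region ($v_1>0$) near $\Sigma_-$, and the quadratic summand absorbs the source together with curvature corrections from the flattening. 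For $c_1,c_2$ sufficiently large and $M_i\sim M$, the function $\phi$ is verified to be a supersolution of the kinetic Fokker--Planck operator and to majorise $f-f(z_g)$ on the parabolic-kinetic boundary of a small cylinder around the origin. A symmetric choice furnishes the lower barrier, and the comparison principle for classical solutions yields the pointwise estimate.

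\textbf{Main obstacle.} The crux is verifying the supersolution property of $\phi$ across the non-smooth interface $\{-v_1+c_1x_1^{1/3}=0\}$. On the outgoing-interior side, the $v$-diffusion of the first term yields a singular negative contribution $-A_{11}\partial_{v_1}^2[(-v_1+c_1x_1^{1/3})^{1/2}]\sim(-v_1+c_1x_1^{1/3})^{-3/2}$, which must be compensated by the transport counterterm $c_1x_1^{-2/3}v_1(-v_1+c_1x_1^{1/3})^{-1/2}$ generated by $v\cdot\nabla_x$; this balance forces $c_1$ large and reveals why $\tfrac12$ is the critical exponent---any smaller power would flip the sign of the leading term, ruining the barrier, while any larger power is admissible only because the combination $x_1\sim r^3$, $v_1\sim r$ of the kinetic scaling precisely matches the quadratic tangency of characteristics at a grazing point in a $C^{1,1}$ domain. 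On the incoming-interior side the exponential tail of Theorem~\ref{rough-asmp}\ref{rough-asmp2} carries the estimate, and matching across the interface reduces to a boundedness/continuity check. The $C^{1,1}$-regularity of $\partial\Omega$ keeps the curvature corrections at quadratic order, absorbable by the last summand. With the barrier and pointwise bound in hand, the Cordes--Nirenberg gradient estimate and the splitting argument outlined above conclude the proof.
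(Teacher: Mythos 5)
Your high-level architecture matches the paper's: establish a pointwise $\Ck^{1/2}$ estimate at the grazing set via a barrier, then combine it with the Cordes--Nirenberg estimate (Proposition~\ref{CN-bdy}) and a patching/iteration scheme to conclude. The fatal gap is in the barrier.

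Your first barrier term $\phi_1:=(-v_1+c_1 x_1^{1/3})_+^{1/2}$ does not satisfy the supersolution inequality $\LL\phi_1\ge 0$ anywhere near the boundary in the deep outgoing region, and no choice of $c_1$ rescues it. Writing $w:=-v_1+c_1x_1^{1/3}>0$, a direct computation gives
\begin{align*}
\LL_0\phi_1
= \frac{c_1^2}{6}\,x_1^{-1/3}\,w^{-1/2}
\;-\;\frac{c_1}{6}\,x_1^{-2/3}\,w^{1/2}
\;+\;\frac{1}{4}\,w^{-3/2}
\;+\;(\text{lower order}),
\end{align*}
and the second term is singular and \emph{negative} as $x_1\to 0^+$. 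To dominate it one would need $w\lesssim x_1^{1/3}$, i.e.\ $|v_1|\lesssim x_1^{1/3}$; but the whole point of the barrier is to control the region of the kinetic cylinder where $|v_1|\sim r$ and $x_1\sim\epsilon\ll r^3$ (the $\RR_+$ region of Figure~\ref{img-graze}), where $w\approx|v_1|$ and $\LL_0\phi_1\sim -\tfrac{c_1}{6}\epsilon^{-2/3}|v_1|^{1/2}\to-\infty$. Increasing $c_1$ makes this \emph{worse}, not better, and the exponential and quadratic summands contribute only bounded amounts and cannot compensate an unbounded negative term. So the comparison principle cannot be invoked, and the pointwise $\Ck^{1/2}$ bound --- on which everything else in your proposal rests --- is not established. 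The paper avoids this by choosing $\psi$ to be an \emph{exact stationary solution} of $(v\cdot\nabla_x-\Delta_v)\psi=0$, built from Tricomi's confluent hypergeometric function (Lemma~\ref{phi-lemma}, citing~\cite{GJW}); the singular transport and diffusion contributions of the ansatz $\psi=(-x_d)^{1/6}\Upsilon(-v_d^3/(9x_d))$ cancel identically because $\Upsilon$ solves the confluent hypergeometric ODE \eqref{ode-Tricomi}, and then $\Psi=\psi-2v_d-|v_d|^2-t$ gives $\LL_0\Psi=1$ cleanly (Lemma~\ref{phi-barrier}). That exact cancellation is the essential insight, and it is precisely what a piecewise ansatz of the form you propose cannot reproduce.

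Two secondary issues: the constant in the exponential piece must be \emph{small}, not large --- the leading term in $\LL_0\!\left[e^{-c_2 v_1^3/x_1}\right]$ is proportional to $c_2(1-9c_2)\,v_1^4/x_1^2$, which is nonnegative only for $c_2\le 1/9$; and you apply the barrier directly for variable $A$, whereas the small-oscillation terms it generates are only controllable after the dyadic freezing-of-coefficients scheme (approximating $f$ by solutions $f_k$ of the constant-coefficient problem at scale $2^{-k}$ and telescoping, as in Proposition~\ref{sharp-holder}), which your outline gestures at but does not actually carry out.
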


\subsection{Background and comments}
\subsubsection{Diffusive kinetic equations}
The evolution of a particle system at the mesoscopic level can be described by a distribution function $f(t,x,v)$ over time $t$ in the phase space $\Omega\times\R^d$, which encodes the macroscopic variable $x\in\Omega\subset\R^d$ for position and the microscopic variable $v\in\R^d$ for velocity. The governing kinetic equations typically involve two components: particle transport, where free streaming under Hamiltonian dynamics is captured by the operator $\partial_t+v\cdot\nabla_x$, and particle interactions, modelled by a collision operator. In the presence of long-range interactions, collisions are dominated by small-angle deflections, which induce small changes in particle velocity, and cumulatively, result in a diffusion effect in velocity space. This motivates kinetic models of the form \eqref{KFP}, featuring a second-order elliptic operator in the velocity variable on the right-hand side. Physically refined models that account for long-range interactions include the Landau equation, which exhibits similar diffusion in velocity driven by Coulomb interactions, and the non-cutoff Boltzmann equation, whose collision kernel retains angular singularities and induces an integral diffusion in velocity. One may refer to \cite{Villani-2006,MV2015,Mouhot-2018} for further discussions. 

\subsubsection{Interior hypoelliptic regularity}
An aspect of understanding the physical behaviours of such systems lies in the regularity properties of solutions to the diffusive kinetic equations. Despite its diffusion degeneracy in the space variable, a primary regularization mechanism, known as {\em hypoellipticity}, emerges from the combination of velocity-space diffusion by the collision operator and phase-space mixing by the transport operator. The theory of hypoelliptic second-order operators developed in \cite{Ho} reveals a foundational structure underlying this phenomenon. Specifically for \eqref{KFP}, the commutator identity $[\nabla_v,\partial_t+v\cdot\nabla_x]=\nabla_x$ illustrates how derivatives in the velocity space, where diffusion occurs, can generate derivatives in the space variable through the transport dynamics. This mechanism leads to a smoothing effect in $x$ despite the lack of direct diffusion in that direction. It turns out that if the coefficients and the source of \eqref{KFP} lie in $C^\infty$, then the solution is $C^\infty$ on the interior of the domain. 

Recent advances in the study of hypoelliptic regularity for a broad class of kinetic equations, whether linear or nonlinear, have yielded results ranging from H\"older continuity to $C^\infty$ smoothness. Broadly speaking, these developments show that any bounded, rapidly decaying solutions to the nonlinear collisional models, among which are the Landau and non-cutoff Boltzmann equations, are smooth in the absence of boundaries; see \cite{HS,IS}. The regularity theory of \eqref{KFP} plays a crucial role, as its estimates directly inform and carry over to the analysis of the nonlinear equations; see \cite{GIMV,IM}. For extended overviews, one may consult \cite{MV2015,Mouhot-2018,Silvestre-problems}. 

\subsubsection{Boundary effects}
While hypoellipticity ensures interior smoothness of solutions to the kinetic equations, understanding the behaviours near the boundary remains subtle. A fundamental aspect is that the transport nature $v\cdot\nabla_x$ induces velocity directions that either point into the domain (incoming part $\Sigma_-$) or out of the domain (outgoing part $\Sigma_+$) at the space boundary. Well-posedness of kinetic equations requires prescribing suitable boundary conditions for the incoming particle flux, which vary depending on how particles interact with the boundary. Perfectly elastic bounces give specular reflection, thermal re-emission leads to diffuse reflection, and more general mixtures may incorporate absorption, prescribed fluxes or Maxwell-type accommodation. A more detailed account can be found in \cite{Cerc}. 

The key analytical obstacle to boundary regularity arises from the grazing characteristics (grazing part $\Sigma_0$), where particle velocities are tangent to the boundary: $n_x\cdot v=0$. At such points, the transport field $v\cdot\nabla_x$ carries no normal component, meaning particles interact with the boundary in a way that does not immediately push into or out of the domain. This loss of normal transport often impedes the gain of regularity up to the boundary to a level comparable to that typically achieved for interior regularity; see also \cite{HJV,Zhu} for related discussions. 

\subsubsection{H\"older and asymptotic estimates near the boundary}
We investigate in this work the sharp regularity properties for a class of hypoelliptic kinetic equations in nondivergence form, subject to prescribed inflow boundary conditions. We first characterize boundary H\"older regularity and the asymptotic behaviours of solution to \eqref{KFP} with bounded and measurable coefficients in Theorem~\ref{rough-holder} and Theorem~\ref{rough-asmp}, respectively. Their proofs, given in Section~\ref{sec-rough}, are based on the construction of barriers tailored to the kinetic geometry associated with \eqref{KFP}. This approach exploits the transport part of the equation to compensate for diffusion degeneracy and is loosely inspired by hypocoercivity techniques for diffusive kinetic equations (see \cite{Villani-2006,IM}). Moreover, it furnishes a gradient-estimate framework that serves as a foundation for deriving higher-order regularity in Section~\ref{sec-regular}. 

For the corresponding hypoelliptic kinetic equations in divergence form with rough coefficients, the global H\"older regularity has been established in \cite{Sil,Zhu}. Asymptotic behaviour at the incoming boundary similar to that described in part~\ref{rough-asmp2} of Theorem~\ref{rough-asmp}, in the setting without influxes and sources, was studied in \cite{Sil} for divergence form equations and in \cite{HLW} for equations with constant coefficients. Notably, this infinite-order vanishing estimate stands in marked contrast to the classical Hopf lemma for diffusion equations, and is more aptly compared to the asymptotic behaviour near the initial time of solutions to parabolic equations with zero initial data; one may refer to \cite{Aronson} for the exponential infinite-order vanishing phenomenon near $t=0$ in the fundamental solutions. 

We emphasize that Theorem~\ref{rough-holder} does not address the regularity of solutions near the outgoing boundary. This omission is due to the intrinsic difficulty in obtaining interior H\"older estimates in the nondivergence form setting of \eqref{KFP}. In particular, a hypoelliptic analogue of the Krylov-Safonov theory remains largely undeveloped; see \cite{Silvestre-problems} for a comprehensive discussion. 

\subsubsection{Higher-order regularity results}
In Section~\ref{sec-regular}, we consider the case where the coefficients in \eqref{KFP} are regular. We establish higher-order regularity for solutions to \eqref{KFP} away from the grazing set, as presented in Theorem~\ref{regular-Schauder}. This finding is consistent with the general Schauder theory for elliptic equations (see \cite{GT}). A case of particular interest arises when the leading coefficients, namely the entries of the diffusion matrix $A$ in \eqref{KFP}, possess only small oscillations at small scales, which are not necessarily continuous. Under this small-oscillation assumption, we obtain $\Ck^{1+\alpha}$ estimates for solutions to \eqref{KFP} for every $\alpha\in(0,1)$, as stated in part~\ref{regular-Schauder-CN} of Theorem~\ref{regular-Schauder}, in the vein of the classical Cordes-Nirenberg estimates (see for instance \cite{Caf1}). 

The regularity of solutions to \eqref{KFP} up to the incoming boundary $\Sigma_-$ follows from the gradient estimates established in Section~\ref{sec-rough}, upon recognizing that a perturbative argument reduces the study to the constant-coefficients scenario. At the outgoing boundary $\Sigma_+$, the regularity issues turn out to be essentially equivalent to those encountered in the interior of the domain; see for instance \cite{Zhu}. Intuitively, since particles exiting the domain do not require any prescribed conditions, the regularity of solutions near $\Sigma_+$ is governed by the interior mechanisms of the equation. As a result, the higher-order regularity of solutions to \eqref{KFP} with regular coefficients and boundary information is ensured throughout the bounded domain, with the notable exception of the grazing set $\Sigma_0$. 

\subsubsection{Optimal H\"older regularity near the grazing boundary}
Theorem~\ref{regular-optimal} addresses the optimal $\Ck^{1/2}$ kinetic H\"older regularity for solutions to \eqref{KFP} up to the grazing set, under the assumption that only the leading coefficients are H\"older continuous. The sharpness of the $\Ck^{1/2}$ regularity is illustrated by the counterexamples constructed in \cite{GJW}, which demonstrate that even for the one-dimensional stationary model with constant coefficients, solutions may fail to exhibit any higher-order regularity. 

On the constructive side, the explicit example from \cite{GJW}, which we analyse further in Section~\ref{sec-regular}, serves as a key ingredient in constructing a suitable barrier. This kind of barrier function enables us to derive matching regularity bounds for \eqref{KFP} by precisely capturing its boundary behaviours dictated by a simplified model scenario. One may also find a different aspect of related asymptotic features in the study of sharp time decay estimates for the constant-coefficient equation in \cite{HLW}. 

\subsubsection{Further remarks}
While our analysis is carried out at the linear level, it directly informs the nonlinear setting. Based on the global a priori H\"older estimates developed over the course of \cite{GIMV,Sil,Zhu}, Theorem~\ref{regular-optimal} shows that the $\Ck^{1/2}$ kinetic H\"older space characterizes the optimal regularity class for solutions to the nonlinear Landau equation with prescribed inflow boundary conditions. It also follows from the boundary Schauder estimates in Theorem~\ref{regular-Schauder} that $\Ck^{5/2}$ regularity up to the boundary can be attained by such solutions away from the grazing set.

We conclude by noting that the mathematical treatment of the regularity problem for various nonlocal reflection boundary conditions, including general diffuse reflection, has been found, in substance, to be analogous to that for prescribed inflow boundary conditions; see \cite{Zhu}. In addition, the optimal boundary regularity in the case of specular reflection was recently studied in \cite{ROW}, offering another perspective on the boundary hypoelliptic regularization effect. 

\subsection{Organization of the paper}
Section~\ref{sec-pre} contains the preliminaries, including the basic notation, the notions of kinetic H\"older spaces, and the boundary flattening procedure. We establish Theorem~\ref{rough-holder} and Theorem~\ref{rough-asmp} in Section~\ref{sec-rough}, which concern the boundary H\"older regularity and asymptotic estimates, respectively. Higher-order regularity away from grazing boundaries, as stated in Theorem~\ref{regular-Schauder}, and the optimal H\"older regularity near grazing boundaries, given in Theorem~\ref{regular-optimal}, are proved in Section~\ref{sec-regular}. 

\section{Preliminaries}\label{sec-pre}
This section is devoted to introducing the basic notation and kinetic H\"older spaces, as well as outlining the boundary flattening argument.

\subsection{Notation}
For clarity and reference, the main notation is enumerated below. Further notions concerning kinetic H\"older spaces are introduced in the next subsection. 
\begin{itemize}[leftmargin=*]
\item For $z=(t,x,v)\in\R\times\R^d\times\R^d$, the \emph{kinetic scaling} $S_r$ with $r>0$ is defined by 
\begin{align*}
S_r(z):= (r^2t,\,r^3x,\,rv). 
\end{align*}
\item The \emph{Galilean group operation} of $z=(t,x,v)$ with respect to a base point $z_0=(t_0,x_0,v_0)\in\R\times\R^d\times\R^d$ is given by
\begin{align*}
z_0\circ z:=(t+t_0,\,x+x_0+tv_0,\,v+v_0).
\end{align*}
Under this group operation, the inverse of $z$ takes the form 
\begin{align*}
z^{-1}:=(-t,-x+tv,-v).
\end{align*}
\item The \emph{kinetic cylinder} centered at $z_0=(t_0,x_0,v_0)$ with radius $r>0$ is defined as
\begin{align*}
Q_r(z_{0}):=&\left\{ z_{0} \circ S_r(z): \, z \in (-1,0]\times B_1(0)\times B_1(0)\right\}. 
\end{align*}
\item We adopt the standard multi-index notation. For a multi-index $m=(m_1,\ldots,m_N)\in\mathbb{N}^N$, 
\begin{align*}
|m|:=\sum\nolimits_{i=1}^Nm_i,\quad\  m!:=\prod\nolimits_{i=1}^Nm_i!. 
\end{align*}
For $u=(u_1,\ldots,u_N)\in\R^N$, we write 
\begin{align*}
u^m:=\prod\nolimits_{i=1}^Nu_i^{m_i},\quad\ D_u^m:=\prod\nolimits_{i=1}^N\partial_{u_i}^{m_i}. 
\end{align*}
\item We consider the spatially restricted cylinder over the domain $\Omega$, defined by
\begin{align*}
G_r(z_0):=\{(t,x,v)\in Q_r(z_0):\,x\in\overline{\Omega}\}. 
\end{align*}
When the cylinder is centered at the origin of $\R^{1+2d}$, we simply write $G_r$. 
\item The $d\times d$ identity matrix is denoted by $I_d$. 
\item For $i\in\{1,\ldots,d\}$, let $\ee_i$ denote the standard orthonormal basis of $\R^d$. 
\item We write $\R_+=[0,\infty)$, $\R_-=(-\infty,0]$, and define the half-space $\mathbb{H}_-^d=\R^{d-1}\times\R_-$. 
\item The zero vectors in $\R^d$ and $\R^{d-1}$ are denoted by $\zero$ and $\zero'$, respectively. 
\item Given a vector $u=(u_1,\ldots,u_d)\in\R^d$, we write $u'=(u_1,\ldots,u_{d-1})$ for its first $d-1$ components. 
\item In the half-space setting, we define the phase domain and its boundary by
\begin{align*}
\OO:=\mathbb{H}_-^d\times\R^d,\quad \Gamma:=\partial\OO=\R^{d-1}\times\{0\}\times\R^d.
\end{align*} 
The associated time-dependent sets are given by  
\begin{align*}
\OO_T:=(-\infty,0]\times\OO,\quad \Sigma:=(-\infty,0]\times\Gamma. 
\end{align*} 
We further specify the outgoing, incoming, and grazing subsets of the boundary, along with their time-dependent counterparts, as 
\begin{align*}
\begin{alignedat}{2}
\Gamma_\pm&=\left\{(x,v)\in\Gamma:\,x_d=0,\ \,\pm v_d>0\right\},\quad &\Sigma_\pm&=(-\infty,0]\times\Gamma_\pm,\\
\Gamma_0&=\left\{(x,v)\in\Gamma:\,x_d=0,\ \, v_d=0\right\},\quad &\Sigma_0&=(-\infty,0]\times\Gamma_0.
\end{alignedat}
\end{align*}
\item Given coefficients satisfying \eqref{Elliptic}, we abbreviate the hypoelliptic operator 
\begin{align*}
\LL:=\partial_t+v\cdot\nabla_x- A:D^2_v-B\cdot\nabla_v.
\end{align*}
For the purpose of illustrating the constant-coefficient setting, we consider
\begin{align*}
\LL_0:=\partial_t+v\cdot\nabla_x- \Delta_v.
\end{align*}
\item We adopt the bracket convention $\langle\cdot\rangle:=(1+|\cdot|^2)^{1/2}$. 
\item To account for the possible velocity-dependent growth of the bound on the coefficient $B$ (see \eqref{Elliptic} and \eqref{Elliptic-v}), we introduce the shorthand $C_B:=C\langle v_0\rangle^2$, where $C\ge 1$ is a constant selected so that $C_B\ge\|B\|_{L^\infty(G_1(z_0))}$ for a prescribed point $z_0=(t_0,x_0,v_0)$ which serves as the center of a cylinder. 
\item A quasi-distance function $\rho:\PP\to\R_+$, defined for $(x,v)\in\PP\subset\R^d\times\R^d$, is specified in Lemma~\ref{hypodist1}. The associated time-dependent function $\rho_t(x,v)=\rho(x^t,v)$, for $(t,x,v)\in\PP_T$, is introduced in \S~\ref{evolu} as the composition of $\rho$ with the spatial translation $x\mapsto x^t$ in its argument. 
\item Let $\Omega_{t,x}\subset\R\times\R^d$ and $\Omega_v\subset\R^d$ be domains with Lipschitz boundaries, and consider $G:=\Omega_{t,x}\times\Omega_v$. The \emph{kinetic boundary} of $G$ is defined as
\begin{align*}
\partial_{\rm kin} G:=\{(t,x,v)\in\partial\Omega_{t,x}\times\Omega_v:(1,v)\cdot n_{t,x}<0\}\cup(\Omega_{t,x}\times\partial\Omega_v), 
\end{align*}
where $n_{t,x}\in\R^{1+d}$ denotes the unit outward normal vector at $(t,x)\in\partial\Omega_{t,x}$. 
\item We write $Y\lesssim Z$ if $Y\le CZ$ for some constant $C>0$ independent of the parameters of interest in a given context. The implicit constant $C$ is understood to depend only on the parameters specified in the corresponding statement, such as $d,\lambda,\Lambda,\Omega$. The notation $Y\approx Z$ indicates that both $Y\lesssim Z$ and $Z\lesssim Y$ hold. 
\end{itemize}

\subsection{Kinetic H\"older spaces}\label{k-holder}
We present the framework of kinetic H\"older spaces and the associated kinetic geometry, tailored to the kinetic scaling and the Galilean group operation. This provides the natural language for formulating regularity estimates for solutions to kinetic hypoelliptic equations of type~\eqref{KFP}. Comprehensive expositions, as well as rigorous justifications, can be found in \cite{IM,IS-Schauder}. 

\subsubsection{Kinetic H\"older continuity}
For a monomial $m(z)=z^l$ with $z=(t,x,v)\in\R^{1+2d}$, written in multi-index notation as
\begin{align*}
m(t,x,v)=t^{l_t}x^{l_x}v^{l_v},\quad l=(l_t,l_x,l_v)\in\mathbb{N}\times\mathbb{N}^d\times\mathbb{N}^d,
\end{align*}
its \emph{kinetic degree} is defined by
\begin{align*}
\deg_{\rm kin}(m):=|l|_{\rm kin}:=2l_t +3|l_x|+|l_v|.
\end{align*}
Every polynomial $p\in\R[t,x,v]$ can be uniquely expressed as a linear combination of such monomials, and its kinetic degree $\deg_{\rm kin}(p)$ is defined as the maximum among their degrees. 

\begin{definition}
Let $\beta>0$ and let $G\subset\R\times\R^d\times\R^d$ be an open set. A function $f:G\to\R$ is said to be \emph{$\Ck^\beta$-continuous} at a point $z_0\in G$, if there exists a polynomial $p_0\in\R[t,x,v]$ with $\deg_{\rm kin}(p_0)<\beta$ and a constant $C>0$ such that for any $r>0$,  
\begin{align}\label{holdercondition}
\|f-p_0\|_{L^\infty(Q_r(z_0)\cap G)}\le C\;\!r^\beta. 
\end{align}
We say $f\in \Ck^\beta(G)$ if $f$ is $\Ck^\beta$ at every $z_0\in G$. The smallest admissible constant $C$ in \eqref{holdercondition} is denoted by $[f]_{\Ck^\beta(G)}$, called the $\Ck^\beta$ semi-norm. The corresponding norm is
\begin{align*}
\|f\|_{\Ck^\beta(G)}:=\|f\|_{L^\infty(G)}+[f]_{\Ck^\alpha(G)}. 
\end{align*}
\end{definition}

\subsubsection{Kinetic gauge}
Define the \emph{kinetic gauge} $\|\cdot\|$ for $z=(t,x,v)\in\R^{1+2d}$ by  
\begin{align*}
\|z\|:=\max\big\{|t|^{1/2},|x|^{1/3},|v|\big\}. 
\end{align*}   
With this notation, the H\"older condition \eqref{holdercondition} is equivalent to 
\begin{align*}
|f(z)-p_0(z)|\le C\;\!\|z_0^{-1}\circ z\|^\beta {\quad\rm for\ all\ }z\in G. 
\end{align*}
It should be observed that the kinetic gauge is homogeneous under the kinetic scaling, satisfies the triangle inequality with respect to the Galilean group operation, and is stable under the group inversion, namely, for any $z,z_0\in\R^{1+2d}$ and $r>0$, 
\begin{align*}
&\|S_r(z)\|=r\;\!\|z\|,\\
&\|z_0\circ z\|\le \|z_0\|+\|z\|,\\
&\,2^{-1/3}\;\!\|z\|\le\|z^{-1}\|\le 2^{1/3}\;\!\|z\|.
\end{align*} 

\subsubsection{Differentiation and polynomial expansion}\label{d-poly}
Similarly to monomials, we assign a kinetic order to differential operators. Consider the left-invariant homogeneous differential operator
\begin{align*}
D^l:=(\partial_t+v\cdot\nabla_x)^{l_t}D_x^{l_x}D_v^{l_v},\quad l=(l_t,l_x,l_v)\in\mathbb{N}\times\mathbb{N}^d\times\mathbb{N}^d. 
\end{align*}
Its \emph{kinetic order} is also regarded as $|l|_{\rm kin}=2l_t +3|l_x|+|l_v|$, and it satisfies the left-invariant property that for any smooth function $f:\R^{1+2d}\to\R$ and any fixed $z_0\in\R^{1+2d}$, 
\begin{align*}
D^l[f(z_0\circ z)]=(D^l f)(z_0\circ z). 
\end{align*}
In particular, this shows that the structure of the equation~\eqref{KFP} is left-invariant under the Galilean group operation. 

The regularity of a function can be characterized either directly in the kinetic H\"older spaces introduced earlier, or equivalently through the regularity of its derivatives. This consistency means that $f\in \Ck^\beta$ if and only if $D^lf\in\Ck^{\beta-|l|_{\rm kin}}$ for every $l$ with $|l|_{\rm kin}<\beta$. 

Note that the polynomial $p_0(z)$ appearing in \eqref{holdercondition} can be obtained from the truncated Taylor expansion of $f$ around $z_0$, which takes the form 
\begin{align*}
T_{z_0,\beta}[f](z):= \sum\nolimits_{|l|_{\rm kin}<\beta} \frac{D^lf(z_0)}{l!}\,\big(z_0^{-1}\circ z\big)^l. 
\end{align*}
Applying the left-invariant operator $D^l$ to the polynomial expansion of $f$ commutes with the expansion up to a degree shift, that is, 
\begin{align*}
D^lT_{z_0,\beta}[f]=T_{z_0,\beta-|l|_{\rm kin}}[D^lf]. 
\end{align*}

\subsection{Boundary flattening procedure}\label{BFP}
All of the analysis to be presented in the subsequent sections is carried out essentially in the context of the half-space problem. The reduction to the half-space setting is achieved through a boundary flattening procedure described in this subsection, which localizes the analysis near the boundary and transforms the original domain into a simpler geometry where the boundary becomes flat and coincides with a coordinate hyperplane. In these new coordinates, \eqref{KFP} is reformulated with modified coefficients, while its essential structural features, namely the transport and velocity diffusion, are preserved. 

Let $z_0=(t_0,x_0,v_0)\in\Sigma$. Suppose that $\partial\Omega$ is of class $C^{k,\alpha}$ for some $k\ge1$ and $\alpha\in(0,1]$. Then, near $x_0$, the boundary $\partial\Omega$ can be flattened by a $C^{k,\alpha}$-diffeomorphism constructed from a $C^{k,\alpha}$-function $\mathscr{P}:\R^{d-1}\to\R$ and a constant $L\in(0,1]$, given by the map 
\begin{align*}
P:B_L(x_0)\cap\overline{\Omega}\ & \to\ \mathbb{H}_-^d=\R^{d-1}\times\R_-, \\
x=(x',x_d)\ & \mapsto\ y=(y',y_d)=(x',x_d-\mathscr{P}(x')),
\end{align*}
so that $P|_{\partial\Omega}\cdot\ee_d=0$, and $|P'|+|P'^{-1}|$ is bounded in $B_L(x_0)$. Here $P'$ denotes the Jacobian matrix of $P$, which can be explicitly expressed as $P'=\left(\begin{smallmatrix}I_{d-1} & 0\\-D\mathscr{P} & 1\end{smallmatrix}\right)$. Consider the phase-domain transformation $\mathcal{E}$ defined on a neighborhood $\mathcal{U}$ of $z_0$,
\begin{align*}
\mathcal{E}:(t,x,v)\ \mapsto\ (t,y,w):=(t,P(x),P'(x)v). 
\end{align*}
Given that $f:\mathcal{U}\to\R$, we define the function $\hat{f}:\mathcal{E}(\mathcal{U})\to\R$ by
\begin{align*}
\hat{f}(t,y,w):=f\circ\mathcal{E}^{-1}(t,y,w)=f(t,x,v) . 
\end{align*}
We observe that the transport term transforms as 
\begin{align*}
\partial_tf+v\cdot\nabla_xf = \partial_t\hat{f}+P'v\cdot\nabla_y\hat{f}+\sum\nolimits_{i,j=1}^d v_i\;\!v_j\;\!\partial_{x_i}\partial_{x_j}P\cdot\nabla_v\hat{f}. 
\end{align*}
Since $P$ is independent of $v$, the diffusion and drift terms transform as  
\begin{align*}
A:D_v^2f+B\cdot\nabla_vf = (P'AP'^T):D_w^2\hat{f}+P'B\cdot\nabla_w\hat{f}. 
\end{align*}
In the new coordinate system $(t,y,w)$, the incoming, outgoing, and grazing parts of the phase boundary are mapped to 
\begin{align*}
\mathcal{E}(\Sigma_\pm\cap\mathcal{U})&=\{y_d=0,\,\pm w_d>0\}\cap\mathcal{E}(\mathcal{U}),\\
\mathcal{E}(\Sigma_0\cap\mathcal{U})&=\{y_d=0,\,w_d=0\}\cap\mathcal{E}(\mathcal{U}). 
\end{align*}
In other words, the boundary flattening procedure reconfigures the problem into a half-space geometry, providing a convenient framework for the subsequent analysis.  

\begin{lemma}\label{flatten}
Let $\partial\Omega\in C^{1,1}$, $z_0=(t_0,x_0,v_0)\in\Sigma$ and $f_b:\Sigma_-\to\R$. The phase-domain transformation $\mathcal{E}$, defined on some neighbourhood $\mathcal{U}$ of $z_0$, ensures that if $f$ satisfies \eqref{KFP} subject to \eqref{Elliptic} in $\mathcal{U}$ and $f=f_b$ on $\Sigma_-\cap\mathcal{U}$, then $\hat{f}=f\circ\mathcal{E}^{-1}$ satisfies 
\begin{align*}
\left\{\begin{aligned}
\ &\partial_t\hat{f}+w\cdot\nabla_y\hat{f}
=\hat{A}:D_w^2\hat{f} +\hat{B}\cdot\nabla_w\hat{f}+\hat{S} {\quad\rm in\ }\mathcal{E}(\mathcal{U}), \\
\ &\,\hat{f}= f_b\circ\mathcal{E}^{-1}{\quad\rm on\ } \mathcal{E}(\Sigma_-\cap\mathcal{U}). \\
\end{aligned}\right. 
\end{align*}	
Here the new coefficients and the new source term defined by
\begin{align*}
\hat{A}\circ\mathcal{E}=P'AP'^T,\quad 
\hat{B}\circ\mathcal{E}=P'B-v^{\otimes2}\!:\!D^2P,\quad 
\hat{S}\circ\mathcal{E}=S. 
\end{align*}
also satisfies \eqref{Elliptic}, in the sense that there is some constant $C_\Omega\ge1$ depending only on $d$ and $\Omega$ such that over the region $\mathcal{E}(\mathcal{U})$, 
\begin{align}\label{Elliptic-v}
\left\{\begin{aligned}
\ &C_\Omega^{-1}\lambda I_d\le \hat{A}\le C_\Omega\,\Lambda I_d, \\
\ &|\hat{B}|\le C_\Omega\,\Lambda\langle v_0\rangle^2.
\end{aligned}\right. 
\end{align}
In addition, if $A\in\Ck^\alpha$ for some $\alpha\in(0,1)$, then $\hat{A}\in\Ck^\alpha$ as well. If $\partial\Omega\in\Ck^{6+\beta}$ and $A,B,S\in\Ck^\beta$ for some $\beta>0$, then we have $\hat{A},\hat{B},\hat{S}\in\Ck^\beta$. 
\end{lemma}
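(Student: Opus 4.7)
My plan is to construct a local boundary-flattening map, carry out a chain-rule computation that yields $\hat{A},\hat{B},\hat{S}$ in the stated form, and then verify the ellipticity bounds \eqref{Elliptic-v}, the mapping of boundary subsets, and the preservation of kinetic H\"older regularity. The first three steps are essentially direct computations; the main obstacle is the last, which requires carefully converting classical H\"older regularity of the flattening map in $x$ alone into kinetic H\"older regularity of the composite coefficients viewed as functions of $(t,x,v)$.

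Since $\partial\Omega\in C^{1,1}$, after an orthogonal rotation sending $n_{x_0}$ to $-\ee_d$, the boundary is locally a graph $x_d=\mathscr{P}(x')$ of a $C^{1,1}$ function on some ball. The map $P(x)=(x',x_d-\mathscr{P}(x'))$ has lower-triangular Jacobian $P'$ with unit diagonal, so $|P'|$ and $|P'^{-1}|$ are bounded by a constant $C_\Omega$ on some $B_L(x_0)$. For $L$ small enough, $\mathcal{E}(t,x,v)=(t,P(x),P'(x)v)$ is a $C^{1,1}$ diffeomorphism on $\mathcal{U}=(t_0-L^2,t_0]\times B_L(x_0)\times B_L(v_0)$. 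Writing $y=P(x)$, $w=P'(x)v$, and using that $P$ is independent of $v$, the chain rule gives $\nabla_v=P'^T\nabla_w$, whence $A:D_v^2f=(P'AP'^T):D_w^2\hat{f}$ and $B\cdot\nabla_vf=P'B\cdot\nabla_w\hat{f}$, while
\begin{align*}
\partial_tf+v\cdot\nabla_xf=\partial_t\hat{f}+w\cdot\nabla_y\hat{f}+\big[(v\cdot\nabla_x)(P'v)\big]\cdot\nabla_w\hat{f}.
\end{align*}
Direct expansion identifies the bracket as $v^{\otimes 2}\!:\!D^2P$, so moving it to the right-hand side gives $\hat{B}\circ\mathcal{E}=P'B-v^{\otimes 2}\!:\!D^2P$ and yields the equation stated in the lemma. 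The bounds \eqref{Elliptic-v} follow at once from $|P'|,|P'^{-1}|\le C_\Omega$, $\|D^2\mathscr{P}\|_{L^\infty}\lesssim_\Omega 1$, and $|v|\le\langle v_0\rangle$ on $\mathcal{U}$; the identification of $\Sigma_\pm,\Sigma_0$ uses that the outward unit normal in $y$-coordinates is $\ee_d$, while on $\partial\Omega\cap B_L(x_0)$ the original normal $n_x$ is a positive multiple of the last column of $P'^T$, so $\mathrm{sgn}(n_x\cdot v)=\mathrm{sgn}(w_d)$.

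For the regularity preservation, the key step I would establish first is the metric equivalence $\|\mathcal{E}(z_1)^{-1}\circ\mathcal{E}(z_2)\|\approx\|z_1^{-1}\circ z_2\|$ uniformly on $\mathcal{U}$, expressing that $\mathcal{E}$ respects the Galilean group structure and the kinetic gauge up to bounded distortion. Granted this, the $\Ck^\alpha$ claim is immediate: $P'$ is $C^{0,1}$ in $x$ alone, so as a function of $(t,x,v)$ it lies in $\Ck^3$ and a fortiori in $\Ck^\alpha$, and the product rule for kinetic H\"older spaces gives $\hat{A}=P'AP'^T\circ\mathcal{E}^{-1}\in\Ck^\alpha$. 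For the higher-order claim, the hypothesis $\partial\Omega\in\Ck^{6+\beta}$ translates to $\mathscr{P}\in C^{2+\beta/3}$ in the classical sense --- one kinetic unit in the $x$-only direction counts as three classical units --- whence $D^2P\in\Ck^\beta$ as a function of $(t,x,v)$. Combined with $P'\in\Ck^{3+\beta}$ and the hypotheses $A,B,S\in\Ck^\beta$, the product and composition rules of \cite{IM,IS-Schauder} yield $\hat{A},\hat{B},\hat{S}\in\Ck^\beta$. The bulk of the remaining work will be the careful verification of the kinetic composition rule for $\mathcal{E}^{-1}$, which can be handled through the polynomial-approximation definition of $\Ck^\beta$ combined with the metric equivalence above.
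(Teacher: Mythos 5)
Your identification of $P$, $\mathcal{E}$, the chain-rule computation yielding $\hat A = P'AP'^T$, $\hat B = P'B - v^{\otimes 2}\!:\!D^2P$, the ellipticity bounds, and the boundary-set mapping all reproduce exactly the computations the paper presents in Subsection~\ref{BFP}; the paper gives no further explicit proof of the lemma, so up to this point your argument is the paper's own. The metric equivalence $\|\mathcal{E}(z_1)^{-1}\circ\mathcal{E}(z_2)\|\approx\|z_1^{-1}\circ z_2\|$ you propose as the backbone of the regularity transfer is a reasonable and correct idea, and it does deliver the $\Ck^\alpha$ preservation for $\alpha\in(0,1)$.

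There are, however, two inaccuracies in the regularity part, one harmless and one substantive. The harmless one: you write that $P'\in C^{0,1}$ in $x$ alone implies $P'\in\Ck^3$. This is false. A Lipschitz function $g=g(x)$ lies in $\Ck^2$ but not, in general, in $\Ck^3$: $|x-x_0|$ is only controlled by $\|z_0^{-1}\circ z\|^2\langle v_0\rangle$, not $\|z_0^{-1}\circ z\|^3$, because transport shifts position at rate $\|z\|^2$. Concretely, for $g(x)=|x_1|$ at $z_0=(0,0,\ee_1)$ and the path $z=(t,t\ee_1,\ee_1)$ the gauge is $|t|^{1/2}$, yet $g-g(0)-a_1t$ has a one-sided linear term, so the remainder is not $O(|t|^{3/2})$ for any $a_1$. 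Since you only need $\Ck^\alpha$ with $\alpha<1$, this over-claim does no damage.

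The substantive gap is in the higher-order $\Ck^\beta$ claim. You translate $\partial\Omega\in\Ck^{6+\beta}$ to $\mathscr{P}\in C^{2+\beta/3}$ using the rate ``one kinetic unit in $x$ $\leftrightarrow$ three classical units,'' and then assert $D^2P\in\Ck^\beta$. But that rate is correct only in the restriction direction (fixing $t$ and $v$). In the direction you actually need --- embedding a classical H\"older function of $x$ alone back into the kinetic scale --- the exponent degrades by a factor $2$, not $3$: $g(x)\in C^{0,\gamma}$ gives $g\in\Ck^{2\gamma}$ and this is sharp, by the same characteristic-path test with $g(x)=|x_1|^\gamma$. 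Hence $D^2\mathscr{P}\in C^{0,\beta/3}$ yields only $D^2P\in\Ck^{2\beta/3}$, which falls short of $\Ck^\beta$. The correct way to close this, and the route that requires no translation at all, is to read the hypothesis $\mathscr{P}\in\Ck^{6+\beta}$ intrinsically and apply the kinetic differentiation rule from \S~\ref{d-poly}: $D_x^2$ has kinetic order $6$, so $D_x^2\mathscr{P}\in\Ck^{(6+\beta)-6}=\Ck^\beta$ directly. Detouring through classical H\"older spaces is what introduces the loss. The same warning applies to your claim $P'\in\Ck^{3+\beta}$: use $D_x\mathscr{P}\in\Ck^{(6+\beta)-3}=\Ck^{3+\beta}$ by the differentiation rule rather than a classical-to-kinetic embedding. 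Finally, you correctly flag that for $\beta\ge1$ the composition with $\mathcal{E}^{-1}$ needs more than the metric equivalence --- the approximating polynomials must be shown to transform consistently --- and that would indeed need to be carried out carefully before the proof is complete.
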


\begin{remark}\label{remark-reduction}
Without loss of generality, we may further assume that $\mathcal{E}(z_0)=(t_0,y_0,w_0)$ satisfies $t_0=0$, $y_0=\zero$ and $w_0=(\zero',w_0\cdot\ee_d)$. Indeed, for general $\mathcal{E}(z_0)=(t_0,y_0,w_0)$ with $w_0=(w_0',w_0\cdot\ee_d)\in\R^{d-1}\times\R$, we consider the base point $\tilde{z}_0:=(t_0,y_0,w_0',0)$. By means of the Galilean transformation, the function $\hat{f}(\tilde{z}_0\circ\hat{z})$, with respect to the variable $\hat{z}=\tilde{z}_0^{-1}\circ\mathcal{E}(z)\in\tilde{z}_0^{-1}\circ\mathcal{E}(\mathcal{U})$, satisfies the same type of equation as $\hat{f}$. Moreover, we have 
\begin{align*}
\tilde{z}_0^{-1}\circ\mathcal{E}(z_0)=(0,\zero,\zero',w_0\cdot\ee_d), 
\end{align*}
which also ensures that the boundary condition is preserved. 

This technique, simplifying the analysis, is standard in the study of interior regularity; see for instance \cite{GIMV}. In contrast, one cannot impose $w_0\cdot\ee_d=0$ unless $w_0$ lies on the grazing set, as applying the transformation to eliminate the $d$-th component of $w_0$ would distort the boundary condition. 
\end{remark}

\section{Rough coefficients case}\label{sec-rough}
In this section, we prove Theorems \ref{rough-holder} and \ref{rough-asmp} by constructing tailored barriers that capture the boundary behaviour of solutions to \eqref{KFP} with rough coefficients near the boundary. In the light of Lemma~\ref{flatten}, to analyse local properties of solutions to \eqref{KFP}, it suffices to consider the half-space case where, locally, $\Omega=\mathbb{H}_-^d=\{x\in\R^d:x_d\le0\}$. Accordingly, throughout this section we work in the phase domain $\OO=\mathbb{H}_-^d\times\R^d$ with boundary $\Gamma=\R^{d-1}\times\{0\}\times\R^d$. In view of Remark~\ref{remark-reduction}, we henceforth restrict our attention to estimates in a neighbourhood of the boundary point $\zz=(\tt,\xx,\vv)=(0,\zero,\zero',\vv_d)\in\R^{1+2d}$ for some $\vv_d\in\R$. 

\subsection{Setup for the quasi-distance function}
The following lemma characterizes a quasi-distance function, whose structure is reminiscent of ideas arising in the development of hypocoercivity theory for diffusive kinetic equations (see \cite{Villani-2006,IM}).

\begin{lemma}\label{hypodist1}
Let $(\xx,\vv)=(\zero,\zero',\vv_d)\in\Gamma$, and let the constants $\rr,\kappa,\aa,\bb,\cc>0$ such that $\sqrt{\aa\cc}\ge4\bb$ and $\aa\ge4\cc$. The following assertions are valid.
\begin{enumerate}[label=(\roman*), leftmargin=*]
\item\label{hdist1} 
A point $(\xi,\eta)\not\in\OO$ can be defined by setting $\xi=(\zero',\xi_d)$ and $\eta=(\zero',\eta_d)$ such that 
\begin{align}\label{xieta}
\left\{\begin{aligned}
\, &\bb\;\!\xi_d=\cc\;\!(\eta_d-\vv_d)\\
\, &\,\xi_d=\frac{\sqrt{\aa\cc}\;\!\rr}{\sqrt{\aa\cc-\bb^2}} \in \left[\;\!\rr,\frac{4}{3}\rr\;\!\right]. 
\end{aligned}\right. 
\end{align}	
\item\label{hdist2} The function $\rho:\R^d\times\R^d\to\R_+$, defined by
\begin{align*}
\rho(x,v):=\sqrt{\aa|\kappa x'|^2+\cc|v'|^2+\aa X_d^2-2\bb X_dV_d+\cc V_d^2}, 
\end{align*}
satisfies 
\begin{align*}
\left\{\begin{aligned}
\, &\rho(\xx,\vv)=\sqrt{\aa}\;\!\rr,\\
\, &\!\big\{\rho(x,v)<\sqrt{\aa}\;\!\rr\big\}\cap\OO=\emptyset, 
\end{aligned}\right. 
\end{align*}	
where $x'$ and $v'$ denote the first $d-1$ components of $x$ and $v$, respectively, so that $x=(x',x_d)$ and $v=(v',v_d)$, and 
\begin{align*}
(X_d,V_d):=(x_d-\xi_d,v_d-\eta_d). 
\end{align*}
\item\label{hdist3} Over the region 
\begin{align*}
\PP:=\big\{\sqrt{\aa}\;\!\rr\le \rho(x,v)\le3\sqrt{\aa}\;\!\rr\big\}\cap\OO, 
\end{align*}
the ranges of $|x'|,|v'|,|X_d|,|V_d|$ satisfy 
\begin{align}\label{range}
\left\{\begin{aligned}
\, &|X_d|\ge\rr,\\
\, &\max\{\kappa\;\!|x'|,|X_d|\}\le 4\;\!\rr,\\
\, &\max\{|v'|,|V_d|\}\le \sqrt{\frac{12\aa}{\cc}}\;\!\rr. 
\end{aligned}\right. 
\end{align}	
Here the construction of  the region $\PP$ is illustrated in Figure~\ref{img-in}. 
\end{enumerate}
\end{lemma}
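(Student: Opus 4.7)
The plan is to reduce all three parts to elementary manipulations of the positive definite binary quadratic form $Q(X,V):=\aa X^2-2\bb XV+\cc V^2$ that sits inside $\rho^2$. Positivity of $Q$ is exactly what the hypothesis $\sqrt{\aa\cc}\ge4\bb$ encodes; quantitatively, it gives $\aa\cc-\bb^2\ge\tfrac{15}{16}\aa\cc$, which is the only inequality needed to control every numerical constant in the statement. For part \ref{hdist1}, I would simply solve \eqref{xieta} in order: the second equation determines $\xi_d$ explicitly, and the first then forces $\eta_d=\vv_d+\bb\xi_d/\cc$. The range $\xi_d\in[\rr,\tfrac{4}{3}\rr]$ follows from $1\le\aa\cc/(\aa\cc-\bb^2)\le\tfrac{16}{15}<(4/3)^2$. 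Since $\xi_d>0$, the point $(\xi,\eta)$ lies outside $\OO=\mathbb{H}_-^d\times\R^d$, giving the first assertion.

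For part \ref{hdist2}, the identity $\rho(\xx,\vv)=\sqrt{\aa}\,\rr$ is a substitution: using $\xx=(\zero',0)$ and $\vv=(\zero',\vv_d)$ I get $\xx'=\vv'=\zero'$, $X_d=-\xi_d$, and (by the first line of \eqref{xieta}) $V_d=-\bb\xi_d/\cc$, whence $Q(-\xi_d,-\bb\xi_d/\cc)=(\aa\cc-\bb^2)\xi_d^2/\cc=\aa\rr^2$. The exclusion $\{\rho<\sqrt{\aa}\,\rr\}\cap\OO=\emptyset$ amounts to minimizing $\rho^2$ over $\OO$. Dropping the nonnegative $\aa\kappa^2|x'|^2$ and $\cc|v'|^2$ contributions, and noting that $x_d\le 0$ means $X_d\le-\xi_d$, I would complete the square in $V$ to write $Q(X,V)=\cc(V-\bb X/\cc)^2+(\aa-\bb^2/\cc)X^2$. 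The inner minimum over $V$ leaves $(\aa\cc-\bb^2)X^2/\cc$, which is increasing in $|X|$, so it is minimized on $\{X\le-\xi_d\}$ at the endpoint $X=-\xi_d$ and returns exactly $\aa\rr^2$.

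Part \ref{hdist3} is then bookkeeping. The lower bound $|X_d|\ge\rr$ is immediate from $x_d\le0$ combined with $\xi_d\ge\rr$ from part \ref{hdist1}. All upper bounds come from $\rho^2\le9\aa\rr^2$: the terms $\aa\kappa^2|x'|^2$ and $\cc|v'|^2$ individually sit below $\rho^2$, yielding $\kappa|x'|\le3\rr\le 4\rr$ and $|v'|\le3\sqrt{\aa/\cc}\,\rr\le\sqrt{12\aa/\cc}\,\rr$. For $X_d$ and $V_d$, completing the square for $Q$ in each variable and invoking $\bb^2\le\aa\cc/16$ give $\tfrac{15\aa}{16}X_d^2\le Q(X_d,V_d)\le\rho^2$ and $\tfrac{15\cc}{16}V_d^2\le\rho^2$, hence $|X_d|\le\sqrt{48/5}\,\rr<4\rr$ and $|V_d|\le\sqrt{48\aa/(5\cc)}\,\rr<\sqrt{12\aa/\cc}\,\rr$.

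I do not expect any real obstacle: each step is a finite-dimensional quadratic calculation, and the constants have been chosen with comfortable slack (the second hypothesis $\aa\ge 4\cc$ is not even used in this lemma and presumably enters only in the subsequent barrier arguments of Section~\ref{sec-rough}). The only mildly delicate check is $\xi_d\le\tfrac{4}{3}\rr$, but $4/\sqrt{15}\approx 1.03$ settles it. The geometric content, which will justify the design of $\rho$, is that the sub-level set $\{\rho<\sqrt{\aa}\,\rr\}$ is tangent to $\partial\OO$ at $(\xx,\vv)$ from the exterior — precisely the configuration the ensuing barrier construction is meant to exploit.
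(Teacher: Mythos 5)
Your proof is correct, and it differs from the paper's in two clean ways. First, where you take the two equations in \eqref{xieta} as definitions and then verify $\rho(\xx,\vv)=\sqrt{\aa}\;\!\rr$ by substitution and check the exclusion by completing the square in $V$, the paper instead \emph{derives} $(\xi_d,\eta_d)$ by observing that over the sublevel set $\{\rho\le\sqrt{\aa}\;\!\rr\}$ the quantity $|X_d|$ is maximized precisely where $\bb X_d=\cc V_d$ (the tangency of the ellipse with a vertical line), which gives parts~\ref{hdist1}, \ref{hdist2} and the lower bound in~\ref{hdist3} in one stroke. Second, for the upper bounds on $|X_d|$ and $|V_d|$ you complete the square in each variable separately and use only $\bb^2\le\aa\cc/16$, whereas the paper bounds $X_d^2+V_d^2\le\rho^2/\lambda_0$ via the smaller eigenvalue $\lambda_0$ of $\left(\begin{smallmatrix}\aa&-\bb\\-\bb&\cc\end{smallmatrix}\right)$ and uses $\aa\ge4\cc$ to get $\lambda_0\ge\tfrac{3\cc}{4}$. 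So your remark that $\aa\ge4\cc$ is not needed for this lemma is correct for your argument but is not quite a fair description of the paper's proof, which does invoke it; what you have in fact shown is that the hypothesis is dispensable here and, as a small bonus, your constants are slightly sharper ($|V_d|\le\sqrt{48\aa/(5\cc)}\;\!\rr$ rather than $\sqrt{12\aa/\cc}\;\!\rr$). Both routes are elementary quadratic bookkeeping; the paper's eigenvalue phrasing packages the two upper bounds together, while yours is more explicit about which hypothesis drives which estimate.
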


\begin{figure}
\includegraphics[width=9.35cm]{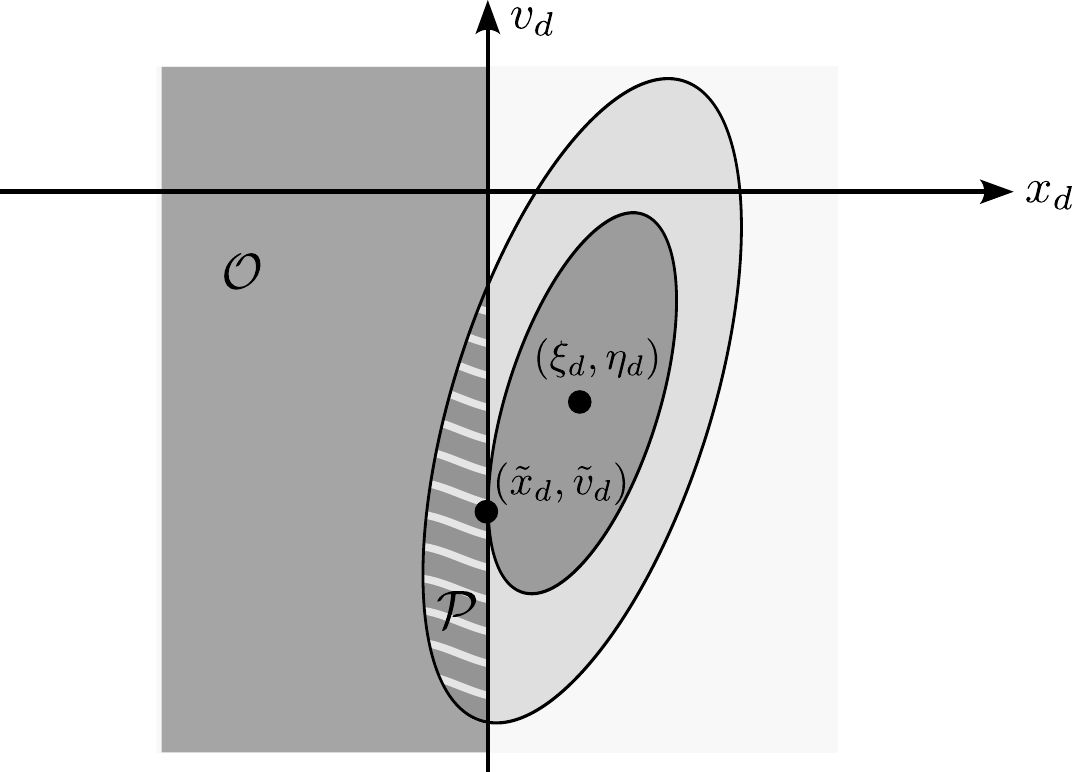}\\\vspace{-0.15cm}
\caption{A section of the constructed region $\PP$.}
\label{img-in}
\end{figure}

\begin{proof}
Parts \ref{hdist1} and \ref{hdist2} can be analysed together. Given $(\xx,\vv)\in\Gamma$ and $\rr,\kappa,\aa,\bb,\cc>0$, over the region $\left\{\rho(x,v)\le\sqrt{\aa}\;\!\rr\right\}$, the quantity $|X_d|$ attains its maximum $\rr_0$ when $\rho(x,v)=\sqrt{\aa}\;\!\rr$ and $\bb X_d=\cc V_d$, that is, $\bb\;\!\xi_d=\cc(\eta_d-\vv_d)$. In this case, we have 
\begin{align*}
\rho(\xx,\vv)=\sqrt{\aa}\;\!\rr,\quad\ 
\xi_d=\rr_0,\quad\ \eta_d-\vv_d=\frac{\bb}{\cc}\;\!\rr_0. 
\end{align*}
These relations allow us to determine $\xi_d$. Furthermore, provided that $\sqrt{\aa\cc}\ge4\bb$, we get  
\begin{align*}
\rr^2\le \rr_0^2 =\xi_d^2
= \frac{\aa\cc\;\!\rr^2}{\aa\cc-\bb^2} \le \frac{16\rr^2}{15}
<\frac{16\rr^2}{9}.
\end{align*}
Taking square roots on both sides implies range of $\xi_d$ in \eqref{xieta}, as well as the lower bound of $|X_d|$ in \eqref{range}. 

For part~\ref{hdist3}, it now suffices to establish the upper bounds in \eqref{range}. We see from the same argument as above that the quantity $|X_d|$ attains its maximum $3\;\!\rr_0$ over $\left\{\rho(x,v)\le3\sqrt{\aa}\;\!\rr\right\}$. Hence, 
\begin{align*}
|X_d|\in\big[\;\!\rr_0,3\;\!\rr_0\big]\subset\big[\;\!\rr,4\;\!\rr\;\!\big] {\quad\rm in\ }\PP. 
\end{align*} 
As for the range of $|V_d|$ over $\big\{\rho(x,v)\le3\sqrt{\aa}\;\!\rr\big\}$, we notice that the smaller eigenvalue $\lambda_0$ of the matrix $\left(\begin{smallmatrix}\aa &-\bb\\-\bb &\cc\end{smallmatrix}\right)$ satisfies 
\begin{align*} 
\lambda_0=\frac{\aa+\cc-\sqrt{(\aa-\cc)^2+4\bb^2}}{2}
=\frac{2(\aa\cc-\bb^2)}{\aa+\cc+\sqrt{(\aa-\cc)^2+4\bb^2}}
\ge \frac{15\aa\cc}{16(\aa+\cc)} 
\ge \frac{3\cc}{4}, 
\end{align*} 
where we used the assumptions $\sqrt{\aa\cc}\ge4\bb$ and $\aa\ge4\cc$ in deriving the last two inequalities above, respectively. It follows that  
\begin{align*}
X_d^2+V_d^2 \le \frac{\rho^2(x,v)}{\lambda_0} \le \frac{9\aa\;\!\rr^2}{\lambda_0}
< \frac{12\aa\;\!\rr^2}{\cc} {\quad\rm in\ }\PP. 
\end{align*}
Finally, since 
\begin{align*}
\aa X_d^2-2\bb X_dV_d+\cc V_d^2\ge0, 
\end{align*}
we derive the ranges of $|x'|$ and $|v'|$ in $\PP$ as claimed.  
\end{proof}

Near the incoming boundary $\Gamma_-=\{(x,v)\in\Gamma:x_d=0,\,v_d<0\}$, the transport term of \eqref{KFP} yields a form of coercivity, as will be demonstrated in subsequent subsections. The following lemma illustrates this effect by providing geometric estimates within $\PP$ relative to its distance from the grazing set $\Gamma_0=\{(x,v)\in\Gamma:x_d=0,\,v_d=0\}$. 

\begin{lemma}\label{hypodist2}
Let $(\xx,\vv)=(\zero,\zero',\vv_d)\in\Gamma_-$, and let the constants $\rr,\kappa,\aa,\bb,\cc>0$ such that $\sqrt{\aa\cc}\ge8\bb$ and $\aa\ge4\cc$. Define the points $\xi,\eta,X,V\in\R^d$, the function $\rho(x, v)$, and the region $\PP$ as in Lemma~\ref{hypodist1}. 

\begin{enumerate}[label=(\roman*), leftmargin=*]
\item\label{vr1} If there holds 
\begin{align}\label{vr}
|\vv_d|\ge \frac{2\bb\;\!\rr}{\cc}, 
\end{align}
then we have $|\eta_d|\ge\frac{9}{20}|\vv_d|$, and
\begin{align*}
\eta_d\left(\aa X_d-\bb V_d\right)\ge\frac{\aa\;\!\rr|\vv_d|}{4} {\quad\rm in\ } \PP. 
\end{align*}

\item\label{vr2} If the following stronger condition holds, namely, 
\begin{align}\label{vrs}
|\vv_d|\ge 8\sqrt{\frac{\aa}{\cc}}\;\!\rr, 
\end{align}
then we have $|v_d|\ge\frac{1}{2}|\vv_d|$, and 
\begin{align*}
v_d\left(\aa X_d-\bb V_d\right)
\ge\frac{\aa\;\!\rr|\vv_d|}{4} {\quad\rm in\ } \PP. 
\end{align*} 
\end{enumerate}
\end{lemma}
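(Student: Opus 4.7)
\smallskip

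The plan is a direct computation exploiting the closed-form expressions from Lemma~\ref{hypodist1}\ref{hdist1}: $\xi_d=\sqrt{\aa\cc/(\aa\cc-\bb^2)}\;\!\rr$ and $\eta_d=\vv_d+\bb\xi_d/\cc$, together with the ranges from Lemma~\ref{hypodist1}\ref{hdist3}. The strengthened hypothesis $\sqrt{\aa\cc}\ge 8\bb$ (in place of $\ge 4\bb$) forces $\bb^2\le\aa\cc/64$, which upgrades the bound on $\xi_d$ to $\xi_d\le (8/\sqrt{63})\;\!\rr$ and will be used twice below---once to control $\bb\xi_d/\cc$ against $|\vv_d|$, and once to push $|\bb V_d|$ below $\aa\rr$. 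The other geometric inputs are $\vv_d<0$ (incoming), $x_d\le 0$ and $\xi_d>0$, which together yield $X_d\le -\rr$ throughout $\PP$.

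For part~\ref{vr1}, rewrite \eqref{vr} as $\bb\rr/\cc\le|\vv_d|/2$ and multiply by $\xi_d/\rr$ to obtain $\bb\xi_d/\cc\le (4/\sqrt{63})|\vv_d|<|\vv_d|$. Combined with $\eta_d=\vv_d+\bb\xi_d/\cc$ and $\vv_d<0$, this gives $\eta_d<0$ and
\[
|\eta_d|=|\vv_d|-\bb\xi_d/\cc\ge \bigl(1-4/\sqrt{63}\bigr)|\vv_d|\ge \tfrac{9}{20}|\vv_d|,
\]
the last step being equivalent to $121\cdot 63\ge 6400$. Next, $X_d\le-\rr$ yields $\aa X_d\le-\aa\rr$, while the bound $|V_d|\le\sqrt{12\aa/\cc}\;\!\rr$ from Lemma~\ref{hypodist1}\ref{hdist3} combined with $\bb^2\le\aa\cc/64$ gives $|\bb V_d|\le\sqrt{12\aa\bb^2/\cc}\;\!\rr\le(\sqrt{3}/4)\aa\rr$. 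Hence $\aa X_d-\bb V_d\le-(1-\sqrt{3}/4)\aa\rr<0$, and
\[
\eta_d(\aa X_d-\bb V_d)\ge \tfrac{9}{20}\bigl(1-\tfrac{\sqrt{3}}{4}\bigr)\aa\rr|\vv_d|\ge \tfrac{\aa\rr|\vv_d|}{4},
\]
the final inequality amounting to $256\ge 243$.

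For part~\ref{vr2}, first observe that \eqref{vrs} implies \eqref{vr} because $\sqrt{\aa\cc}\ge 8\bb$ gives $2\bb\rr/\cc\le \sqrt{\aa/\cc}\;\!\rr/4$. The refinement uses $\bb/\cc\le\sqrt{\aa/\cc}/8$ and $\xi_d\le 4\rr/3$ to produce $\bb\xi_d/\cc\le\sqrt{\aa/\cc}\;\!\rr/6\le|\vv_d|/48$, whence $|\eta_d|\ge 47|\vv_d|/48$. Since $|V_d|\le 2\sqrt{3}\sqrt{\aa/\cc}\;\!\rr\le (\sqrt{3}/4)|\vv_d|$, the triangle inequality gives
\[
|v_d|\ge |\eta_d|-|V_d|\ge \bigl(\tfrac{47}{48}-\tfrac{\sqrt{3}}{4}\bigr)|\vv_d|\ge \tfrac{|\vv_d|}{2},
\]
and $v_d<0$ since $|\eta_d|>|V_d|$ and $\eta_d<0$. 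The bound on $v_d(\aa X_d-\bb V_d)$ then follows verbatim from the argument of part~\ref{vr1} with $|\eta_d|$ replaced by $|v_d|\ge|\vv_d|/2$, using $(1-\sqrt{3}/4)/2\ge 1/4$.

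The main obstacle is purely arithmetic bookkeeping: the constants $\tfrac{9}{20}$ and $\tfrac{1}{4}$ are tight enough that the enhanced assumption $\sqrt{\aa\cc}\ge 8\bb$ is genuinely needed, both to tighten the bound on $\xi_d$ and to leave room after subtracting $|\bb V_d|$ from $\aa\rr$. No new conceptual input beyond the geometric setup of Lemma~\ref{hypodist1} is required.
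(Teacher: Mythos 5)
Your proof is correct and follows essentially the same route as the paper: the closed-form expression for $\xi_d$, the strengthened hypothesis $\sqrt{\aa\cc}\ge 8\bb$ to bound $\bb\xi_d/\cc$ and $|\bb V_d|$, the identity $\eta_d=\vv_d+\bb\xi_d/\cc$ to lower-bound $|\eta_d|$ (resp.\ the triangle inequality for $|v_d|$), and finally the sign information $X_d\le-\rr$, $\eta_d<0$ (resp.\ $v_d<0$) to turn the products into products of absolute values. Your intermediate constants differ slightly from the paper's (e.g.\ $47/48$ rather than a cruder cushion, and you correctly track $\xi_d\le\tfrac{4}{3}\rr$ in part (ii) where the paper's displayed coefficient $\tfrac18$ appears to silently assume $\xi_d\le\rr$), but these are cosmetic; the argument and the final constants $\tfrac{9}{20}$, $\tfrac12$, $\tfrac14$ match.
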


\begin{proof}
For part~\ref{vr1}, we use the assumptions~\eqref{vr} and $\sqrt{\aa\cc} \ge 8\bb$, along with the second equality in \eqref{xieta} from Lemma~\ref{hypodist1}, to see that
\begin{align*}
|\vv_d| \ge \frac{2\bb\;\!\rr}{\cc}
\ge\frac{\sqrt{63\aa}\bb\;\!\rr}{4\sqrt{\cc\left(\aa\cc-\bb^2\right)}} 
=\frac{\sqrt{63}\bb\;\!\xi_d}{4\cc}
>\frac{20\bb\;\!\xi_d}{11\cc}. 
\end{align*}
Considering the first equality in \eqref{xieta} and the property of $\Gamma_-$, where $|\vv_d| = -\vv_d > 0$, we deduce that 
\begin{align*}
-\eta_d=|\vv_d|-\frac{\bb\;\!\xi_d}{\cc} \ge \frac{9|\vv_d|}{20}. 
\end{align*} 
See also Figure~\ref{img-in} for a visualization. Noticing $X_d=x_d-\xi_d<0$, we have $\eta_dX_d=|\eta_d||X_d|$. Consequently, 
\begin{align*}
\eta_d\left(\aa X_d-\bb V_d\right)
\ge \frac{9|\vv_d|}{20}(\aa|X_d|-\bb|V_d|). 
\end{align*} 
By \eqref{range} of Lemma~\ref{hypodist1} and the assumption $\sqrt{\aa\cc}\ge8\bb$, we find that 
\begin{align}\label{vrs-ab}
\aa|X_d|-\bb|V_d|\ge \aa\;\!\rr -\frac{\sqrt{12\aa}\bb\;\!\rr}{\sqrt{\cc}}
\ge \bigg(1-\frac{\sqrt{3}}{4}\bigg)\aa\;\!\rr
>\frac{5\aa\;\!\rr}{9} {\quad\rm in\ } \PP. 
\end{align} 
Combining the two estimates above yields the lower bound in part~\ref{vr1}.

For part~\ref{vr2}, we see from the definition of $V_d$ and the triangle inequality that 
\begin{align*}
|v_d|=|V_d+\eta_d| \ge|\vv_d|-|\vv_d-\eta_d|-|V_d|. 
\end{align*}
By the fact $|\vv_d-\eta_d|=\bb\;\!\xi_d/\cc$ from \eqref{xieta} and the range of $|V_d|$ from \eqref{range}, we obtain
\begin{align*}
|v_d|\ge|\vv_d|-\frac{\bb}{\cc}\;\!\xi_d-\sqrt{\frac{12\aa}{\cc}}\;\!\rr {\quad\rm in\ } \PP. 
\end{align*} 
Applying $\sqrt{\aa\cc}\ge8\bb$ and the stronger assumption \eqref{vrs}, we derive 
\begin{align*}
|v_d| \ge|\vv_d|-\frac{1}{8}\sqrt{\frac{\aa}{\cc}}\;\!\rr -\sqrt{\frac{12\aa}{\cc}}\;\!\rr
>|\vv_d|-4\sqrt{\frac{\aa}{\cc}}\;\!\rr
\ge \frac{|\vv_d|}{2} {\quad\rm in\ } \PP. 
\end{align*} 
This also means $v_d<0$ and $v_dX_d=|v_d||X_d|$ in $\PP$. It then follows from \eqref{vrs-ab} that 
\begin{align*}
v_d\left(\aa X_d-\bb V_d\right)
\ge \frac{|\vv_d|}{2}(\aa|X_d|-\bb|V_d|) > \frac{\aa\;\!\rr|\vv_d|}{4} {\quad\rm in\ } \PP, 
\end{align*} 
which establishes the desired estimate in part~\ref{vr2}. 
\end{proof}

\subsection{Gradient estimates near the incoming boundary}
One of the aims of this subsection is to establish part~\ref{rough-asmp1} of Theorem~\ref{rough-asmp}, which addresses the boundary gradient estimates at $\Sigma_-$ for solutions of \eqref{KFP}. This will be obtained directly from Proposition~\ref{phase-prop} below.

To this end, we introduce the operator, with the coefficients satisfying \eqref{Elliptic}, in the abbreviated form 
\begin{align*}
\LL=\partial_t+v\cdot\nabla_x- A:D^2_v-B\cdot\nabla_v. 
\end{align*}
In view of the range of velocity variable in $\PP$ given by Lemma~\ref{hypodist1}, provided that $\sqrt{\frac{\aa}{\cc}}\;\!\rr\le\langle v_0\rangle$, we can assume that for any $z\in(-\infty,0]\times\PP$, 
\begin{align*}
|B(z)|\le C_B, 
\end{align*}
where $0\le C_B\le C\langle v_0\rangle^2$ for some constant $C\ge 1$ depending only on $d$ and $\Lambda$. Here the velocity center $v_0$ in essence arises from the boundary flattening procedure; see \eqref{Elliptic-v}. By explicitly tracking this dependence on $v_0$, we are able to obtain estimates with quantitative involvement on the velocity center. 

\subsubsection{Time-shifted quasi-distance function}\label{evolu}
For subsequent reference, we fix the notation that will be employed consistently henceforth. Let $(\xx,\vv)=(\zero,\zero',\vv_d)\in\Gamma_-$, and let the function $\rho(x,v)$ and the region $\PP$ be those specified in Lemma~\ref{hypodist1}, associated with parameters $\rr,\kappa,\aa,\bb,\cc>0$ subject to the constraints 
\begin{align}\label{abc} 
\sqrt{\aa\cc}\ge8\bb,\quad\ \aa\ge4\cc,\quad\  \sqrt{\frac{\aa}{\cc}}\;\!\rr\le\langle v_0\rangle. 
\end{align}

In order to incorporate the evolutionary equation, we fix a constant $\hh\ge0$, and in terms of the spatial translations
\begin{align*}
&x^t:=(x',x_d-\hh\;\!\vv_dt)\in\PP,\\ &X_d^t:=X_d-\hh\;\!\vv_dt,
\end{align*}
we define the associated time-dependent function $\rho_t=\rho_t(x,v)$, for $(t,x,v)\in\PP_T$, as 
\begin{align}\label{rho-t}
\begin{aligned}
\rho_t(x,v)&:=\rho(x^t,v)=\sqrt{\aa|\kappa x'|^2+\cc|v'|^2+\aa |X_d^t|^2-2\bb X_d^t\;\!V_d+\cc |V_d|^2}, \\
\PP_T&:=\left\{(t,x,v):\rho_0\le\rho_t(x,v)\le3\rho_0\right\}\cap\OO_T {\quad\rm for\ \ }
\rho_0:=\sqrt{\aa}\;\!\rr, 
\end{aligned}
\end{align}
where we recall that $\OO_T=(-\infty,0]\times\mathbb{H}_-^d\times\R^d$. In this setting, whenever $(t,x,v)\in\PP_T$, the shifted variable $X_d^t$ plays the same role as $X_d$ in Lemmas~\ref{hypodist1} and \ref{hypodist2}. 

\subsubsection{Framework of barrier estimates}
The next lemma, in which $\Phi$ serves as a general candidate for barrier functions, highlights how the transport structure of \eqref{KFP} gives rise to a coercivity property that is instrumental in establishing regularity. It is worth pointing out that the estimate from the lemma below is independent of the lower bound $\lambda$ appearing in the condition \eqref{Elliptic}.  

\begin{lemma}\label{barrier}
Let $(\xx,\vv)=(\zero,\zero',\vv_d)\in\Gamma_-$, and let $\rho_t$ and $\PP_T$ be defined in \eqref{rho-t} associated with $\hh\in(0,\frac{1}{36}]$ and $\rr,\kappa,\aa,\bb,\cc>0$ satisfying \eqref{vrs} and \eqref{abc}. There exists some constant $C_\Lambda>0$ depending only on $d$ and $\Lambda$ such that, for any function $\Phi:\R_+\to\R_+$ satisfying $\Phi',\Phi''\ge0$, and any $(t,x,v)\in\PP_T$, we have 
\begin{align*}
\LL\left(\Phi\circ\rho_t^2\right) &\ge \frac{1}{4}\aa\;\!\rr\;\!|\vv_d|\;\!\Phi' - C_\Lambda\aa\cc\;\!\rr^2\Phi'' - \Phi'\left(32\kappa\;\!\aa\sqrt{\frac{\aa}{\cc}}\;\!\rr^2 + C_\Lambda\cc +C_B\sqrt{\aa\cc}\;\!\rr \right), 
\end{align*}
where we used the abbreviations $\Phi'=\Phi'(\Phi\circ\rho_t^2)$ and $\Phi''=\Phi''(\Phi\circ\rho_t^2)$. 
\end{lemma}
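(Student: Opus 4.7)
The plan is to apply the chain rule to $\Phi\circ\rho_t^2$, producing a term linear in $\Phi'$ that carries the transport coercivity and a term proportional to $\Phi''$ that arises from the velocity Hessian:
\begin{align*}
\LL(\Phi\circ\rho_t^2) = \Phi'\cdot\LL(\rho_t^2) - \Phi''\cdot A\!:\!(\nabla_v\rho_t^2)^{\otimes 2}.
\end{align*}
Since $\Phi',\Phi''\geq 0$, the task reduces to lower-bounding $\LL(\rho_t^2)$ and upper-bounding $A\!:\!(\nabla_v\rho_t^2)^{\otimes 2}$.

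For the transport, a direct computation using $X_d^t = x_d - \xi_d - \hh\vv_d t$ yields
\begin{align*}
\partial_t\rho_t^2 + v\cdot\nabla_x\rho_t^2 = 2(v_d - \hh\vv_d)(\aa X_d^t - \bb V_d) + 2\aa\kappa^2\,v'\cdot x'.
\end{align*}
Applying Lemma~\ref{hypodist2}\ref{vr2} with $X_d^t$ in place of $X_d$ gives $v_d(\aa X_d^t - \bb V_d)\geq\tfrac{1}{4}\aa\rr|\vv_d|$ in $\PP_T$. To absorb the time-shift correction $-\hh\vv_d(\aa X_d^t - \bb V_d)$, I would invoke the sharp algebraic identity
\begin{align*}
\aa(\aa X^2 - 2\bb XV + \cc V^2) - (\aa X - \bb V)^2 = (\aa\cc-\bb^2)V^2 \geq 0,
\end{align*}
which yields $|\aa X_d^t - \bb V_d|\leq\sqrt{\aa}\,\rho_t\leq 3\aa\rr$ in $\PP_T$. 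Combined with $\hh\leq 1/36$, this preserves the net bound $2(v_d-\hh\vv_d)(\aa X_d^t - \bb V_d)\geq\tfrac{1}{4}\aa\rr|\vv_d|$. The cross term is handled by the range estimates $\kappa|x'|\leq 4\rr$ and $|v'|\leq\sqrt{12\aa/\cc}\,\rr$ from Lemma~\ref{hypodist1}\ref{hdist3}, giving $|2\aa\kappa^2 v'\cdot x'|\leq 32\kappa\aa\sqrt{\aa/\cc}\,\rr^2$.

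For the remaining terms, $D_v^2\rho_t^2 = 2\cc I_d$ is constant, so the diffusion contributes $-A\!:\!D_v^2\rho_t^2\geq -2d\Lambda\cc$. The parallel identity $\cc(\aa X^2-2\bb XV+\cc V^2)-(\cc V-\bb X)^2 = (\aa\cc-\bb^2)X^2\geq 0$ yields $|\cc V_d - \bb X_d^t|\leq\sqrt{\cc}\,\rho_t$, and combined with $\cc|v'|^2\leq\rho_t^2$ this gives $|\nabla_v\rho_t^2|\lesssim\sqrt{\aa\cc}\,\rr$. Consequently the drift is controlled by $|B\cdot\nabla_v\rho_t^2|\lesssim C_B\sqrt{\aa\cc}\,\rr$ and the Hessian correction by $A\!:\!(\nabla_v\rho_t^2)^{\otimes 2}\leq\Lambda|\nabla_v\rho_t^2|^2\lesssim\Lambda\aa\cc\rr^2$. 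Assembling all contributions and choosing $C_\Lambda=C_\Lambda(d,\Lambda)$ large enough to absorb the numerical constants yields the claimed inequality.

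The main obstacle is the absorption step for the transport term: a naive triangle inequality on $|\aa X_d^t - \bb V_d|$ in terms of $|X_d^t|$ and $|V_d|$ separately would be too lossy to preserve the explicit coercivity constant $\tfrac{1}{4}$ after the time-shift correction, forcing a more restrictive bound on $\hh$. The sharper Cauchy-Schwarz-type estimate $|\aa X - \bb V|\leq\sqrt{\aa}\,\rho_t$, which exploits the precise quadratic form structure of $\rho_t^2$ (i.e.\ the off-diagonal interaction between $X_d^t$ and $V_d$), is what makes the threshold $\hh\leq 1/36$ and the clean constant $\tfrac{1}{4}$ simultaneously achievable.
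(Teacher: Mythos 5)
Your proof is correct and mirrors the paper's argument in all essentials: the same chain-rule decomposition (the paper simply writes out the four derivative computations explicitly rather than packaging them as $\LL(\Phi\circ\rho_t^2)=\Phi'\,\LL\rho_t^2-\Phi''\,A\!:\!(\nabla_v\rho_t^2)^{\otimes 2}$), the same appeal to Lemma~\ref{hypodist2}\ref{vr2} for the coercivity $v_d(\aa X_d^t-\bb V_d)\ge\frac{1}{4}\aa\rr|\vv_d|$, and the same control of the cross term, diffusion, and drift via the range estimates \eqref{range}. The one place where you deviate is in absorbing the time-shift correction $-\hh\vv_d(\aa X_d^t-\bb V_d)$: you bound $|\aa X_d^t-\bb V_d|\le\sqrt{\aa}\,\rho_t\le 3\aa\rr$ using the Cauchy--Schwarz identity for the quadratic form, whereas the paper simply invokes \eqref{range} together with $\sqrt{\aa\cc}\ge 8\bb$ to obtain
\begin{align*}
|\aa X_d^t-\bb V_d|\le\aa|X_d^t|+\bb|V_d|\le\Big(4+\tfrac{\sqrt{12}}{8}\Big)\aa\rr<\tfrac{18}{4}\aa\rr,
\end{align*}
whence $\tfrac{1}{2}(\aa\rr-4\hh|\aa X_d^t-\bb V_d|)\ge\tfrac{1-18\hh}{2}\aa\rr\ge\tfrac{1}{4}\aa\rr$ once $\hh\le\tfrac{1}{36}$. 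Your closing remark --- that the naive triangle inequality on $|X_d^t|$, $|V_d|$ separately would be too lossy to preserve the constant $\tfrac{1}{4}$ under $\hh\le\tfrac{1}{36}$ --- is therefore mistaken: the triangle inequality combined with \eqref{range} and the structural hypothesis $\sqrt{\aa\cc}\ge 8\bb$ already suffices, and this is exactly what the paper does. Your sharper Cauchy--Schwarz bound is legitimate and yields a slightly better intermediate constant ($\tfrac{1}{3}$ rather than $\tfrac{1}{4}$), but it is not needed; the threshold $\hh\le\tfrac{1}{36}$ was calibrated so that the cruder bound works.
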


\begin{proof}
Through direct computation, we find 
\begin{align*}
\partial_t\left(\Phi\circ\rho_t^2\right) &= -2\;\!\hh\;\!\vv_d\,\Phi'\big(\aa X_d^t-\bb V_d\big),\\
\nabla_x\left(\Phi\circ\rho_t^2\right) &= 2\;\!\Phi' \big(\kappa^2\aa\;\!x',\;\!\aa X_d^t-\bb V_d\;\!\big),\\
\nabla_v\left(\Phi\circ\rho_t^2\right) &= 2\;\!\Phi' \big(\cc\;\!v',\;\!\cc V_d-\bb X_d^t\;\!\big),\\
D_v^2\left(\Phi\circ\rho_t^2\right) &= 2\;\!\Phi'\cc I_d + 4\;\!\Phi''\big(\cc\;\! v',\;\!\cc V_d-\bb X_d^t\;\!\big)^{\otimes 2}. 
\end{align*}
It follows that 
\begin{align*}
\LL\left(\Phi\circ\rho_t^2\right) =&\,-2\hh\;\!\vv_d\;\!\Phi'(\aa X_d^t-\bb V_d) + 2\;\!\Phi'\big(\kappa^2\aa\;\!v'\cdot x'+v_d\;\!(\aa X_d^t-\bb V_d)\big) \\
&\,-4\;\!\Phi''A:\big(\cc\;\!v',\cc V_d-\bb X_d^t\;\!\big)^{\otimes2}
-2\;\!\Phi'\big(\cc A:I_d+B\cdot(\cc\;\!v',\cc V_d-\bb X_d^t) \big). 
\end{align*}
Under the conditions \eqref{vrs} and $\Phi', \Phi'' \ge 0$, part~\ref{vr2} of Lemma~\ref{hypodist2}, along with the boundedness of $A$ and $B$, implies that, for any $(t,x,v)\in\PP_T$, 
\begin{align*}
\LL\left(\Phi\circ\rho_t^2\right) \ge&\; \frac{1}{2}|\vv_d|(\aa\;\!\rr-4\hh|\bb V_d-\aa X_d^t|)\Phi'- 4\;\!\Phi''\Lambda|(\cc\;\!v',\cc V_d-\bb X_d^t)|^2\\
&-2\;\!\Phi'\big( \kappa^2\aa\;\!|x'||v'| + d\Lambda\;\!\cc +C_B\;\!\cc|v'| + C_B|\cc V_d-\bb X_d^t| \big). 
\end{align*}
Taking into account the ranges of $|x'|,|v'|,|X_d^t|,|V_d|$ specified by \eqref{range} from Lemma~\ref{hypodist1}, along with the assumptions in \eqref{abc}, one finds a constant $C_\Lambda\ge1$ depending only on $d$ and $\Lambda$ such that 
\begin{align*}
\frac{1}{2}|\vv_d|(\aa\;\!\rr-4\hh|\aa X_d^t-\bb V_d|)&\ge \frac{1-18\hh}{2}\;\!\aa\;\!\rr\;\!|\vv_d|,\\
4\Lambda|(\cc\;\!v',\cc V_d-\bb X_d)|^2&\le C_\Lambda\aa\cc\;\!\rr^2,\\
2\left(\kappa^2\aa\;\!|x'||v'| + d\Lambda\;\!\cc +C_B\;\!\cc|v'| + C_B|\cc V_d-\bb X_d| \right)&\le 32\kappa\;\!\aa\sqrt{\frac{\aa}{\cc}}\;\!\rr^2 +C_\Lambda \cc +C_B\sqrt{\aa\cc}\;\!\rr . 
\end{align*}
Gathering the three estimates above, together with the condition $\hh\in(0,\frac{1}{36}]$, leads to the stated result. 
\end{proof}

\subsubsection{Gradient estimates}
With the setup for barrier estimates in place, we proceed to derive gradient estimates for solutions to \eqref{KFP} on the incoming boundary $\Sigma_-$. 

\begin{proposition}\label{phase-prop}
Let $\zz=(\tt,\xx,\vv)=(0,\zero,\zero',\vv_d)\in\Sigma_-$ and $R\in(0,\min\{|\vv_d|,\langle v_0\rangle^{-2}\}]$, and let $f$ be a solution of \eqref{KFP} subject to \eqref{Elliptic} in $G_R(\zz)$ and $f=0$ on $\Sigma_-\cap G_R(\zz)$. Then, there is some constant $C>0$ depending only on $d$ and $\Lambda$ such that, for any $(t,x,v)\in G_R(\zz)$, we have 
\begin{align*}
|f(t,x,v)|\lesssim \left(R^{-3}|\vv_d||t| +R^{-3}|x-\xx|+R^{-1}|v-\vv|\right) \left(\|f\|_{L^\infty(G_R(\zz))} +R^2\|S\|_{L^\infty(G_R(\zz))}\right). 
\end{align*}
\end{proposition}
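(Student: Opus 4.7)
The plan is to prove the gradient bound via a barrier argument built on the quasi-distance function $\rho_t$ of \eqref{rho-t} and the coercivity estimate of Lemma~\ref{barrier}. All the analysis is carried out on the annular region $\PP_T=\{\rho_0\le\rho_t\le 3\rho_0\}\cap\OO_T$ which, after a careful choice of parameters, fits inside $G_R(\zz)\cap\OO_T$ and meets $\Sigma_-$ only at $\zz$, where $f$ vanishes. I fix $\hh=\tfrac{1}{36}$, $\bb=0$, a small $\kappa$, take $\rr$ to be a small absolute multiple of $R$, and select the ratio $\aa/\cc$ so that $\sqrt{\aa/\cc}\,\rr$ is comparable to $|\vv_d|$ (within the range allowed by \eqref{vrs}). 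The hypotheses $R\le|\vv_d|$ and $R\le\langle v_0\rangle^{-2}$ keep this consistent with the remaining constraints in \eqref{abc}, and the cross-section bounds of Lemma~\ref{hypodist1} yield $\PP_T\subset G_R(\zz)\cap\OO_T$.

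Next I take $\Psi(t,x,v):=A(\rho_t^2-\rho_0^2)$ with
\[
A=c\,\biggl(\frac{\|f\|_{L^\infty(G_R(\zz))}}{\aa\rr^2}+\frac{\|S\|_{L^\infty(G_R(\zz))}}{\aa\rr|\vv_d|}\biggr)
\]
for a small absolute constant $c>0$. Lemma~\ref{barrier} applied with $\Phi(s)=s$ (so $\Phi'\equiv 1$, $\Phi''\equiv 0$) yields $\LL(\rho_t^2)\ge\tfrac18\aa\rr|\vv_d|$ in $\PP_T$, once the three error terms $32\kappa\aa\sqrt{\aa/\cc}\,\rr^2$, $C_\Lambda\cc$, and $C_B\sqrt{\aa\cc}\,\rr$ are absorbed into $\tfrac14\aa\rr|\vv_d|$ — the last absorption being exactly where $R\le\langle v_0\rangle^{-2}$ is invoked to dominate the $\langle v_0\rangle^2$-growth hidden in $C_B$. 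Consequently $\LL\Psi\ge\|S\|_{L^\infty}$ in $\PP_T$, while on the outer slab $\{\rho_t=3\rho_0\}$ one has $\Psi\ge 8A\aa\rr^2\ge\|f\|_{L^\infty}$. A direct computation — completing the square in $V_d$ along $\{x_d=0\}$ and using that $-\hh\vv_d t\ge 0$ for $t\le 0$ — shows that $\{\rho_t=\rho_0\}\cap\overline\OO_T=\{\zz\}$, where $f=\Psi=0$; on $\Sigma_-\cap\PP_T$ we have $f=0\le\Psi$. The parabolic-type comparison principle then gives $|f|\le\Psi$ throughout $\PP_T$.

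Converting this into the pointwise inequality requires linearising $\rho_t^2-\rho_0^2$ around $\zz$. Since the partial derivatives in $v$ and in $x'$ vanish there by the tangency construction of Lemma~\ref{hypodist1}, a Taylor expansion yields
\[
\rho_t^2-\rho_0^2=\tfrac{2\xi_d(\aa\cc-\bb^2)}{\cc}\bigl(|x_d|+\hh|\vv_d||t|\bigr)+\aa\kappa^2|x'|^2+\cc|v-\vv|^2+(\text{nonnegative quadratic remainder})
\]
for $(t,x,v)\in\PP_T$ with $x_d\le 0$ and $t\le 0$. Multiplying by $A$, substituting the chosen scales of $\aa,\cc,\rr$, and using $|v-\vv|\le R$ and $|x'|\le R^3\le R$ inside $G_R(\zz)$ to dominate the quadratic contributions by the matching linear terms of the desired bound, one recovers exactly the claimed combination. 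For $(t,x,v)\in G_R(\zz)\setminus\PP_T$, the inequality $\rho_t>3\rho_0$ forces $|x-\xx|\gtrsim R^3$ or $|v-\vv|\gtrsim R$, so the right-hand side of the asserted estimate already exceeds $\|f\|_{L^\infty}$ and the bound is automatic. The main obstacle is the arithmetic balancing in the parameter selection: the coercivity condition \eqref{vrs}, the ellipticity relation $\aa\ge 4\cc$, the inclusion $\PP_T\subset G_R(\zz)$, and the outer-boundary lower bound $A\aa\rr^2\gtrsim\|f\|_\infty$ are all simultaneously tight, and it is precisely the regime $R\le\min\{|\vv_d|,\langle v_0\rangle^{-2}\}$ that makes these four requirements mutually compatible.
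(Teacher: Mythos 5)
Your overall strategy mirrors the paper's: build the barrier from the time-shifted quasi-distance $\rho_t$ of~\eqref{rho-t}, take $\Phi$ linear so that $\Phi''=0$, invoke the coercivity of Lemma~\ref{barrier}, absorb the error terms, apply the maximum principle, and finally linearize $\rho_t^2-\rho_0^2$ at $\zz$ to read off the gradient bound. Setting $\bb=0$ is a legitimate simplification of the paper's $\bb=\tfrac1{16}$ (the coercivity in Lemma~\ref{hypodist2}\ref{vr2} only improves). However, there is a substantive gap in your parameter scaling that breaks the argument.

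You choose $\rr$ ``a small absolute multiple of $R$'' and tune $\aa/\cc$ so that $\sqrt{\aa/\cc}\,\rr\approx|\vv_d|$. Both scales are off. The cylinder $G_R(\zz)$ has spatial radius $R^3$ and velocity radius $R$, and the range bounds~\eqref{range} give $|X_d^t|\lesssim\rr$, $|V_d|\lesssim\sqrt{\aa/\cc}\,\rr$ on $\PP_T$; for the inclusion $\PP_T\subset G_R(\zz)$ you therefore need $\rr\lesssim R^3$ and $\sqrt{\aa/\cc}\,\rr\lesssim R$, which is exactly what the paper arranges via $\aa=\rr^{-2/3}$, $\cc=\tfrac14\rr^{2/3}$, $\rr\approx R^3$. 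With $\rr\approx R$ and $\sqrt{\aa/\cc}\,\rr\approx|\vv_d|$, the region $\PP_T$ sticks out of $G_R(\zz)$ in both $x$ and $v$ whenever $R<1$ or $R<|\vv_d|$. Moreover, the absorption of the error term $C_\Lambda\cc$ into $\tfrac14\aa\rr|\vv_d|$ in Lemma~\ref{barrier} requires $\cc\lesssim\aa\rr|\vv_d|$; under your scaling this reduces to $R\lesssim|\vv_d|^3$, which is strictly stronger than the hypothesis $R\le|\vv_d|$ and can fail badly near grazing. Finally, tracking the powers of $R$ through the linearization with $\rr\approx R$ produces coefficients $R^{-1}$ (not $R^{-3}$) in front of $|t|$ and $|x-\xx|$, and $|\vv_d|^{-1}$ (not $R^{-1}$) in front of $|v-\vv|$, which is not the claimed estimate. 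Two smaller inaccuracies: the prefactor $c$ in your choice of $A$ must be bounded \emph{below} (not ``small'') for both $\Psi\ge\|f\|_{L^\infty}$ on $\{\rho_t=3\rho_0\}$ and $\LL\Psi\ge\|S\|_{L^\infty}$ to hold; and $\PP_T$ does not meet $\Sigma_-$ only at $\zz$ --- what is true (and what the comparison actually uses) is that the inner shell $\{\rho_t=\rho_0\}\cap\overline{\OO_T}=\{\zz\}$, while $\PP_T\cap\Sigma_-$ is a genuine open piece on which you rely on $f=0$ directly.
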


\begin{proof}
First, in the context of Lemma~\ref{barrier}, we choose $\rr,\kappa,\aa,\bb,\cc,\hh>0$ as follows, 
\begin{align}\label{abc-h}
\begin{aligned}
\rr^\frac{1}{3} \le \theta_0\min\big\{|\vv_d|,\langle v_0\rangle^{-2}\big\},\quad\kappa=1,\\
\aa:=\rr^{-\frac{2}{3}},\quad 
\bb:=\frac{1}{16},\quad 
\cc:=\frac{1}{4}\;\!\rr^\frac{2}{3},\quad
\hh:=\frac{1}{36}.
\end{aligned}
\end{align}
It is straightforward to verify that the conditions \eqref{vrs} and \eqref{abc} required by Lemma~\ref{barrier} are satisfied for any $\theta_0\in(0,\frac{1}{16}]$. Recall from Lemma~\ref{hypodist1} and \eqref{rho-t} that
\begin{align*}
\PP_T=\left\{\rho_0\le\rho(x-\vv\;\!t/36,v)\le3\rho_0\right\}\cap\OO_T {\quad\rm for\ \ } \rho_0=\rr^\frac{2}{3}. 
\end{align*}
We may assume that $r=c_1\rr^\frac{1}{3}$ and $R=c_2\rr^\frac{1}{3}$ for some constants $c_1,c_2>0$, taken so that $r\approx R\approx\rr^\frac{1}{3}$ and 
\begin{align*}
G_r(\zz) \subset \QQ:=\big\{-10\;\!\rr^\frac{2}{3}<t\le0,\ \rho_0\le\rho_t(x,v)\le3\rho_0 \big\}\cap\OO_T\subset G_R(\zz). 
\end{align*} 

Next, we consider the barrier function $\Phi\circ\rho_t^2$ defined on $\QQ$, where $\Phi:[\;\!\rho_0^2,\infty)\to\R_+$ is given by  
\begin{align*}
\Phi(\tau):=\rho_0^{-2}\tau-1.
\end{align*}
Applying Lemma~\ref{barrier} and noting that $\Phi'=\rho_0^{-2}=\rr^{-\frac{4}{3}}$ and $\Phi''=0$, along with the parameters $\rr,\kappa,\aa,\bb,\cc,\hh$ chosen in \eqref{abc-h}, yields that, for some constants $C_\Lambda,C_B>0$ with $C_B\lesssim\langle v_0\rangle^2$, 
\begin{align*}
\LL\left(\Phi\circ\rho_t^2\right) \ge \frac{1}{4}|\vv_d|\;\!\rr^{-1} - \rr^{-1}\big(C_\Lambda\;\!\rr^\frac{1}{3}+C_B\;\!\rr^\frac{2}{3} \big) {\quad\rm in\ }\QQ.
\end{align*}
By choosing the constant $\theta_0\in(0,\frac{1}{16}]$ sufficiently small so that 
\begin{align*}
1-16\;\!\theta_0\;\!C_\Lambda&\ge0,\\
1-16\;\!\theta_0\;\!C_B\langle v_0\rangle^{-2}&\ge0, 
\end{align*}
and observing that $\theta_0$ depends only on $d$ and $\Lambda$ owing to $C_B\lesssim\langle v_0\rangle^2$, we deduce 
\begin{align}\label{LL-Phi}
\left\{\begin{aligned}
\, &\LL\left(\Phi\circ\rho_t^2\right) \ge \frac{1}{8}|\vv_d|\;\!\rr^{-1} 
\gtrsim R^{-2}{\quad\rm in\ }\QQ,\\
\, &\;\Phi(9\rho_0^2)=8.
\end{aligned}\right. 
\end{align} 
Besides, by noticing that $\rho_{\tt}(\xx,\vv)=\rho(\xx,\vv)=\rho_0$ and $\Phi(\rho_0^2)=0$, we have
\begin{align*}
\Phi(\rho_t^2(x,v))&\le \Phi'\big[\left|\partial_t(\rho_t^2)\right||t|+\left|D_x(\rho_t^2)\right||x-\xx|+\left|D_v(\rho_t^2)\right||v-\vv|\big]\\
&\le 2\;\!\rr^{-\frac{4}{3}} \big[ (\aa|(x',X_d^t)|+\bb|V_d|)(\hh|\vv_d||t|+|x-\xx|) +(\cc|(v',V_d)|+\bb|X_d^t|)|v-\vv|\big]. 
\end{align*}
Given the ranges from \eqref{range} and choice of parameters $\kappa,\aa,\bb,\cc,\hh$ from \eqref{abc-h}, it follows that
\begin{align}\label{Phi-bound}
\Phi(\rho_t^2(x,v))\lesssim \rr^{-1}|\vv_d||t| +\rr^{-1}|x-\xx| +\rr^{-\frac{1}{3}}|v-\vv|  {\quad\rm in\ }\QQ. 
\end{align}

Let $M_f:=\|f\|_{L^\infty(G_R(\zz))} +  R^2\|S\|_{L^\infty(G_R(\zz))}$. Based on \eqref{KFP} and \eqref{LL-Phi}, we see that 
\begin{align*}
\LL\left(\pm f-C_0M_f\,\Phi\circ\rho_t^2\right)\le0 {\quad\rm in\ }G_r(\zz), 
\end{align*}
where the constant $C_0\ge1$ depends only on $d$ and $\Lambda$. By using the maximum principle (Lemma~\ref{max-su}) and \eqref{Phi-bound}, we obtain  
\begin{align*}
|f| \le C_0M_f\,\Phi\circ\rho_t^2 \lesssim M_fR^{-3}|\vv_d||t| +M_fR^{-3}|x-\xx| + M_fR^{-1}|v-\vv| {\quad\rm in\ }G_r(\zz). 
\end{align*}
This implies the stated result. 
\end{proof}

In view of the boundary flattening procedure described in Subsection~\ref{BFP}, Proposition~\ref{phase-prop} immediately implies part~\ref{rough-asmp1} of Theorem~\ref{rough-asmp}. Moreover, one obtains bounds for derivatives of arbitrary order for solutions to constant-coefficient equations via a standard induction argument, following, for instance, \cite[Corollary~3.3]{IM}, where the interior case was treated. Without loss of generality, we consider 
\begin{align*}
\LL_0:=\partial_t+v\cdot\nabla_x- \Delta_v. 
\end{align*}

\begin{corollary}\label{phase-grad}
Let $\zz\in\Sigma_-$, $R\in(0,\min\{|\vv_d|,1\}]$, and $l=(l_t,l_x,l_v)\in\mathbb{N}^{1+2d}$. Suppose that $f$ satisfies $\LL_0f=0$ in $G_R(\zz)$ and $f=0$ on $\Sigma_-\cap G_R(\zz)$. There is some constant $C_l>0$ depending only on $d,\lambda,\Lambda,|\vv|$ and $|l|$ such that 
\begin{align}\label{phase-grad-est}
\big|\partial_t^{l_t}D_x^{l_x}D_v^{l_v}f(\zz)\big|
\le C_l R^{-|l|_{\rm kin}}\|f\|_{L^\infty(G_R(\zz))} . 
\end{align}
\end{corollary}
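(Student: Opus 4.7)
The plan is strong induction on $N = |l|_{\rm kin}$, with Proposition~\ref{phase-prop} as the workhorse, mirroring the interior scheme of \cite[Corollary~3.3]{IM} but adapted to the incoming boundary. The base case $N = 0$ is immediate, since $f(\zz) = 0$. For the inductive step, I would first reformulate in terms of the left-invariant monomials $D^m := Y^{m_t} D_x^{m_x} D_v^{m_v}$, where $Y := \partial_t + v\cdot\nabla_x$. Since $\partial_t = Y - v\cdot\nabla_x$, iterated expansion rewrites $\partial_t^{l_t}D_x^{l_x}D_v^{l_v}f(\zz)$ as a polynomial combination of $D^m f(\zz)$ with $|m|_{\rm kin} \le N$ and coefficients polynomial in $\vv$, so it suffices to bound such left-invariant derivatives.

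Two structural facts then drive the induction. First, the commutation relations $[\partial_{v_i}, Y] = \partial_{x_i}$ and $[\Delta_v, Y] = 2\nabla_v\cdot\nabla_x$, combined with $\LL_0 f = 0$, allow one to rewrite $\LL_0 D^m f$ as a fixed linear combination of left-invariant derivatives of $f$ of kinetic order at most $|m|_{\rm kin} + 1$ (iteratively trading $Y$ for $\Delta_v$ and absorbing commutator remainders). Second, every left-invariant derivative $D^m f$ vanishes on $\Sigma_- \cap G_R(\zz)$: tangential monomials in $\partial_t, \partial_{x_1}, \ldots, \partial_{x_{d-1}}, \partial_{v_1}, \ldots, \partial_{v_d}$ annihilate $f$ because $f \equiv 0$ on this relatively open subset of $\Sigma_-$, while any occurrence of the normal factor $\partial_{x_d}$ is eliminated via $v_d\partial_{x_d}f = \Delta_v f - \partial_t f - \sum_{i<d}v_i\partial_{x_i}f$ followed by division by $v_d$, which is bounded below in absolute value on $\Sigma_- \cap G_R(\zz)$ thanks to $R \le |\vv_d|$. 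With these in hand, for $|l|_{\rm kin} = N$ one factors $D^l = D \cdot D^{l'}$ with $D \in \{\partial_{v_i}, Y, \partial_{x_i}\}$ of kinetic orders $1, 2, 3$ respectively, sets $g := D^{l'} f$, and applies Proposition~\ref{phase-prop} to $g$ on the smaller cylinder $G_{R/2}(\zz)$. The induction hypothesis together with the interior analogue \cite[Corollary~3.3]{IM} at interior points controls $\|g\|_{L^\infty(G_{R/2}(\zz))} + R^2\|\LL_0 g\|_{L^\infty(G_{R/2}(\zz))} \lesssim R^{-|l'|_{\rm kin}}\|f\|_{L^\infty(G_R(\zz))}$, and the Lipschitz-type bound on $g$ near $\zz$ furnished by Proposition~\ref{phase-prop}, combined with a directional difference quotient in the $D$-direction, yields $|D^l f(\zz)| = |D g(\zz)| \lesssim R^{-N}\|f\|_{L^\infty(G_R(\zz))}$, closing the induction.

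The principal obstacle I foresee is the second structural observation: rigorously propagating the boundary vanishing to arbitrarily high-order left-invariant derivatives requires an iterated elimination of $\partial_{x_d}$ via the equation, with each elimination incurring a division by $v_d$. This is the mechanism producing the dependence of $C_l$ on $|\vv|$ (through accumulating negative powers of $|\vv_d|$) and explains why the hypothesis $R \le |\vv_d|$ is indispensable, as it is precisely what keeps $|v_d|$ uniformly away from zero on the portion of the incoming boundary encountered in the argument. The remaining ingredients — the commutator algebra yielding the first observation, the conversion from ordinary to left-invariant derivatives at the base point, and the uniform formulation of the induction hypothesis over both boundary and interior points of $G_{R/2}(\zz)$ — are routine bookkeeping.
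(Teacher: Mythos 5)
Your proposal is correct in spirit and follows essentially the same route as the paper: iterate Proposition~\ref{phase-prop} through an induction on derivative order, using the commutation relations to produce a controllable source term at each step and the equation to handle time derivatives. The paper organizes this as a nested triple induction (first $|l_x|$, then $|l_v|$, then $l_t$) rather than a single strong induction on $|l|_{\rm kin}$, but that is purely organizational. Your explicit identification that every left-invariant derivative of $f$ vanishes on $\Sigma_-$ — tangential ones trivially, and $\partial_{x_d}^k f$ inductively via $v_d\partial_{x_d}f=\Delta_v f-\partial_t f-v'\cdot\nabla_{x'}f$ and division by $v_d$ — is exactly the mechanism the paper's one-line remark ``each $D_x^{l_x}f$ satisfies the same equation as $f$'' leaves implicit, and you correctly tie it to the $R\le|\vv_d|$ hypothesis and the $|\vv|$-dependence of $C_l$. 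One imprecision worth fixing: the reformulation of $\partial_t^{l_t}D_x^{l_x}D_v^{l_v}$ in terms of left-invariant monomials $D^m$ produces terms with $|m|_{\rm kin}\ge N$, not $\le N$, since $\partial_t=Y-v\cdot\nabla_x$ trades the degree-$2$ operator for a degree-$3$ one plus $Y$; relatedly, a directional difference quotient along the Galilean flow extracted from Proposition~\ref{phase-prop} gives $|Yg(\zz)|\lesssim|\vv_d|R^{-3}M_g$, not $R^{-2}M_g$. The resolution, which the paper adopts and which you partly allude to (``iteratively trading $Y$ for $\Delta_v$''), is to never use a difference quotient in the $Y$ direction at all: each time derivative is eliminated by substituting $\partial_t f=\Delta_v f-v\cdot\nabla_x f$ after the pure $(x,v)$-derivative estimates are in hand, so the induction should be closed by that substitution rather than by a $Y$-direction difference quotient.
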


\begin{proof}
The proof proceeds by nested induction first on $|l_x|$, then on $|l_v|$, and finally on $l_t$. For the base case $l_t=0$, Proposition~\ref{phase-prop} yields \eqref{phase-grad-est} whenever $|l_x|+|l_v|\le1$. Noting that the interior regularity then permits differentiation in $x$ and each $D_x^{l_x}f$ satisfies the same equation as $f$, we know that \eqref{phase-grad-est} extends inductively to every $l_x$ with $l_t=0$ and $|l_v|\le1$. Next assume that \eqref{phase-grad-est} holds for $l_t=0$, arbitrary $l_x$ and $1\le|l_v|\le n$. We observe that $D_x^{l_x}D_v^{l_v}f$ satisfies  
\begin{align*}
\LL_0\big(D_x^{l_x}D_v^{l_v}f\big)=-\sum\nolimits_{i=1}^d \,l_v\cdot\ee_i\, D_x^{l_x+\ee_i} D_v^{l_v-\ee_i}f,
\end{align*}
whose right-hand side is controlled by the induction hypothesis. Proposition~\ref{phase-prop} therefore gives the result for all $l_x$ and $|l_v|=n+1$ with $l_t=0$. Finally, the required bounds for all $l_t$ are obtained from the established estimates for derivatives of $f$ with respect to $x,v$ and the relation that $\partial_tf=\Delta_vf-v\cdot\nabla_xf$, thereby completing the proof. 
\end{proof}

\subsection{Infinite-order vanishing in the absence of influxes and sources}
Our next objective is to establish part~\ref{rough-asmp2} of Theorem~\ref{rough-asmp}, which asserts an infinite-order vanishing estimate with exponential-type decay towards the incoming boundary. 

\begin{lemma}\label{barrier-ss}
Let $(\xx,\vv)=(\zero,\zero',\vv_d)\in\Gamma_-$, and let $\rho_t$ and $\PP_T$ be defined in \eqref{rho-t} associated with $\rr,\kappa,\aa,\bb,\cc,\hh>0$. Consider a function $\Phi:\R_+\to\R_+$ such that $\Phi',\Phi''\ge0$. There exist some constants $\theta_0,C_0>0$ depending only on $d$ and $\Lambda$ such that, if we choose 
\begin{align}\label{r-abc}
\begin{aligned}
\rr\le\theta_0\min\big\{|\vv_d|^3,\langle v_0\rangle^{-6}\big\},\quad\kappa:=\frac{1}{64},\\
\aa:=|\vv_d|^2\;\!\rr^{-1},\quad 
\bb:=|\vv_d|,\quad 
\cc:=64\;\!\rr,\quad
\hh:=\frac{1}{36},
\end{aligned}
\end{align}
then these parameters satisfy the conditions \eqref{vrs} and \eqref{abc}; for any $(t,x,v)\in\PP_T$, we have
\begin{align*}
\LL\left(\Phi\circ\rho_t^2\right)\gtrsim C_0\;\!|\vv_d|^3\,\Phi'\circ\rho_t^2 -|\vv_d|^{-2}\rho_t^4\,\Phi''\circ\rho_t^2. 
\end{align*}
\end{lemma}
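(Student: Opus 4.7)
The plan is to apply Lemma~\ref{barrier} with the prescribed parameter choice \eqref{r-abc} and then convert the resulting inequality into the claimed form.

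First, I would verify the hypotheses of Lemma~\ref{barrier} under \eqref{r-abc}. The identity $\aa\cc = 64|\vv_d|^2 = 64\bb^2$ yields $\sqrt{\aa\cc} = 8\bb$ and $8\sqrt{\aa/\cc}\,\rr = |\vv_d|$, so both condition \eqref{vrs} and the first inequality of \eqref{abc} are saturated. The remaining conditions $\aa \ge 4\cc$ and $\sqrt{\aa/\cc}\,\rr \le \langle v_0\rangle$ reduce, respectively, to $\rr \le |\vv_d|/16$ and $|\vv_d| \le 8\langle v_0\rangle$. The latter is automatic, and the former follows from the interpolated bound
\[
\rr = \rr^{2/3}\cdot \rr^{1/3} \le (\theta_0|\vv_d|^3)^{2/3}(\theta_0\langle v_0\rangle^{-6})^{1/3} = \theta_0|\vv_d|^2\langle v_0\rangle^{-2} \le \theta_0|\vv_d|
\]
(using $|\vv_d|\le\langle v_0\rangle$), for any $\theta_0 \le 1/16$. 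The choice $\hh = 1/36$ also lies at the boundary of the admissible range in Lemma~\ref{barrier}.

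Next I would substitute \eqref{r-abc} directly into the conclusion of Lemma~\ref{barrier}. The five coefficients collapse to $\tfrac{1}{4}\aa\rr|\vv_d| = \tfrac{1}{4}|\vv_d|^3$, $32\kappa\aa\sqrt{\aa/\cc}\rr^2 = \tfrac{1}{16}|\vv_d|^3$, $C_\Lambda\aa\cc\rr^2 = 64C_\Lambda|\vv_d|^2\rr^2$, $C_\Lambda\cc = 64C_\Lambda\rr$, and $C_B\sqrt{\aa\cc}\rr = 8C_B|\vv_d|\rr$. Merging the two $|\vv_d|^3$-contributions leaves a net coefficient $\tfrac{3}{16}|\vv_d|^3$ on $\Phi'$, and the task reduces to absorbing the residual error terms $C_\Lambda\rr$ and $C_B|\vv_d|\rr$ into a small portion of this principal part. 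The first error satisfies $C_\Lambda\rr \le \theta_0 C_\Lambda|\vv_d|^3$ directly from $\rr \le \theta_0|\vv_d|^3$. For the second, I invoke $C_B \lesssim \langle v_0\rangle^2$ together with the interpolated bound $\rr \le \theta_0|\vv_d|^2\langle v_0\rangle^{-2}$ obtained above, whence $C_B|\vv_d|\rr \lesssim \theta_0|\vv_d|^3$; taking $\theta_0$ sufficiently small, depending only on $d$ and $\Lambda$, then secures a lower bound of the form $c|\vv_d|^3\Phi'$ on the first-order part. Finally, for the second-order term I observe that throughout $\PP_T$, $\rho_t^2 \ge \rho_0^2 = \aa\rr^2 = |\vv_d|^2\rr$, so $|\vv_d|^2\rr^2 \le |\vv_d|^{-2}\rho_t^4$; since $\Phi'' \ge 0$, this converts $-64C_\Lambda|\vv_d|^2\rr^2\Phi''$ into the stated $-|\vv_d|^{-2}\rho_t^4\Phi''$ form, up to a multiplicative constant absorbed in the $\gtrsim$ symbol.

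The main obstacle I anticipate lies in the bookkeeping of the absorption step, specifically ensuring that the smallness threshold $\theta_0$ depends solely on $d$ and $\Lambda$, independent of $\vv_d$ and $v_0$. The twin smallness constraints in \eqref{r-abc} are precisely tailored to allow the geometric-mean interpolation that balances the growth of the drift coefficient $B$ (through $C_B \lesssim \langle v_0\rangle^2$) against the degeneration at small $|\vv_d|$; without both constraints simultaneously, the $C_B|\vv_d|\rr$-term could not be controlled uniformly by $|\vv_d|^3$.
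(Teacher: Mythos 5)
Your proof is correct and follows essentially the same strategy as the paper: apply Lemma~\ref{barrier} with the prescribed parameters, substitute to simplify each coefficient, absorb the error terms $C_\Lambda\cc$ and $C_B\sqrt{\aa\cc}\;\!\rr$ into a small fraction of the principal $|\vv_d|^3$-term via the twin smallness constraints in \eqref{r-abc}, and finally bound the second-order coefficient using $\rho_t^2\ge\rho_0^2=|\vv_d|^2\rr$. Your interpolation $\rr = \rr^{2/3}\rr^{1/3}\le\theta_0|\vv_d|^2\langle v_0\rangle^{-2}$ is the same mechanism the paper uses in the form $|\vv_d|^2\ge\theta_0^{-2/3}\rr^{2/3}$ and $\rr^{1/3}\le\theta_0^{1/3}\langle v_0\rangle^{-2}$; you also verify the hypotheses of Lemma~\ref{barrier} more explicitly than the paper, which simply asserts they hold for $\theta_0\in(0,\frac{1}{16}]$.
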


\begin{proof}
In view of Lemma~\ref{barrier}, we account for the parameters $\rr,\kappa,\aa,\bb,\cc,\hh$ as defined in \eqref{r-abc}, where the requirements \eqref{vrs} and \eqref{abc} are met for any $\theta_0\in(0,\frac{1}{16}]$. We choose $\theta_0$ to be sufficiently small so that 
\begin{align*}
1-1024\;\!C_\Lambda\;\!\theta_0&\ge0,\\
1-128\;\!\theta_0\;\!C_B\langle v_0\rangle^{-2}&\ge0, 
\end{align*}
where $C_\Lambda$ is given by Lemma~\ref{barrier}, and $\theta_0$ depends only on $d$ and $\Lambda$ as $C_B\lesssim\langle v_0\rangle^2$. Then, 
\begin{align*}
\frac{1}{8}\;\!\aa\;\!\rr|\vv_d| -32\kappa\;\!\aa\sqrt{\frac{\aa}{\cc}}\;\!\rr^2-C_\Lambda\cc
&=\frac{1}{8}\;\!|\vv_d|^3- \frac{1}{16}\;\!|\vv_d|^3 -64\;\!C_\Lambda\;\!\rr\\
&\ge\frac{1}{16}|\vv_d|^3 \big(1-1024\;\!C_\Lambda\;\!\theta_0\big)\ge0.  
\end{align*}
By noticing $|\vv_d|^2\ge\theta_0^{-\frac{2}{3}}\rr^\frac{2}{3}$ and $\rr^\frac{1}{3}\le\theta_0^\frac{1}{3}\langle v_0\rangle^{-2}$, we have 
\begin{align*}
\frac{1}{16}\;\!\aa\;\!\rr|\vv_d|-  C_B\sqrt{\aa\cc}\;\!\rr 
&=\frac{1}{16}|\vv_d|^3-8\;\!C_B\;\!|\vv_d|\;\!\rr \\
&\ge\frac{1}{16}\;\!\theta_0^\frac{1}{3}\;\!|\vv_d|\;\!\rr^\frac{2}{3}\big(\theta_0^{-1}-128\;\!C_B\langle v_0\rangle^{-2}\big)\ge0. 
\end{align*}
Combining the above two estimates with Lemma~\ref{barrier}, we obtain, for any $(t,x,v)\in\PP_T$, 
\begin{align*}
\LL\left(\Phi\circ\rho_t^2\right) &\ge \frac{1}{16}\;\!\aa\;\!\rr\;\!|\vv_d|\,\Phi'\circ\rho_t^2 -C_\Lambda\;\!\aa\cc\;\!\rr^2\,\Phi''\circ\rho_t^2 \\
&= \frac{1}{16}|\vv_d|^3\,\Phi'\circ\rho_t^2 -64\;\!C_\Lambda|\vv_d|^2\;\!\rr^2\,\Phi''\circ\rho_t^2. 
\end{align*}
Recalling that $\rho_t^2\ge\rho_0^2=|\vv_d|^2\;\!\rr$, we establish the result as claimed. 
\end{proof}

The following technical lemma, which will be employed in the proof of Proposition~\ref{barrier-exp} below, characterizes the decay property for solutions to a relevant ordinary differential equation. 

\begin{lemma}\label{ode}
Let $\Theta>0$ be a constant. Consider the function $\phi:\R_+\to\R$ defined by 
\begin{align*}
\phi'(\tau)=\exp\left(-\frac{\Theta}{\tau}\right), \quad
\phi(\tau)=\int_0^\tau\phi'(\tilde{\tau}){\;\!\rm d}\tilde{\tau}. 
\end{align*}
For fixed $\tau_0>0$, define the function $\Phi:[\tau_0,\infty)\to\R_+$ as  
\begin{align*}
\Phi(\tau):=\frac{\phi(\tau)-\phi(\tau_0)}{\phi(9\tau_0)-\phi(\tau_0)}. 
\end{align*} 
Then, $\Phi$ satisfies the ordinary differential equation
\begin{align*}
\tau^2\Phi''(\tau)-\Theta\,\Phi'(\tau)=0, 
\end{align*} 
and for any $\tau\in[0,4\tau_0]$, 
\begin{align*}
\Phi(\tau) \le \Phi(4\tau_0) \le \left(1+\frac{\Theta}{\tau_0}\right) \exp\left(-\frac{\Theta}{8\tau_0}\right). 
\end{align*}
\end{lemma}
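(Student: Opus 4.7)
The plan is to verify the ODE by a short direct differentiation and then obtain the pointwise estimate on $\Phi(4\tau_0)$ by estimating the numerator and denominator of its defining ratio separately, using the explicit form of $\phi'(\tau)=e^{-\Theta/\tau}$.

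For the differential equation, I would begin from $\phi'(\tau)=e^{-\Theta/\tau}$ and differentiate once more to obtain $\phi''(\tau)=(\Theta/\tau^2)\,\phi'(\tau)$. Dividing both sides of this identity by the normalising constant $\phi(9\tau_0)-\phi(\tau_0)$ (which is precisely the factor relating $\phi',\phi''$ to $\Phi',\Phi''$) immediately yields $\tau^2\Phi''(\tau)-\Theta\,\Phi'(\tau)=0$. Since moreover $\Phi'(\tau)>0$ on $[\tau_0,\infty)$, the function $\Phi$ is strictly increasing there with $\Phi(\tau_0)=0$; extending $\Phi$ by zero on $[0,\tau_0)$ if one wishes to cover the full interval, the monotonicity bound $\Phi(\tau)\le\Phi(4\tau_0)$ on $[0,4\tau_0]$ is automatic.

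For the quantitative upper bound, I would exploit the monotonicity of $s\mapsto e^{-\Theta/s}$. For the numerator, bounding by the maximum of the integrand on the integration interval yields
\begin{align*}
\phi(4\tau_0)-\phi(\tau_0)=\int_{\tau_0}^{4\tau_0}e^{-\Theta/s}\,ds\le 3\tau_0\,e^{-\Theta/(4\tau_0)}.
\end{align*}
For the denominator, it suffices to restrict to the subinterval $[8\tau_0,9\tau_0]$ where the minimum of the integrand is $e^{-\Theta/(8\tau_0)}$, giving
\begin{align*}
\phi(9\tau_0)-\phi(\tau_0)\ge\int_{8\tau_0}^{9\tau_0}e^{-\Theta/s}\,ds\ge\tau_0\,e^{-\Theta/(8\tau_0)}.
\end{align*}
Taking the ratio produces the clean estimate $\Phi(4\tau_0)\le 3\,e^{-\Theta/(8\tau_0)}$.

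To reach the stated form with prefactor $(1+\Theta/\tau_0)$, I would split according to the size of $s:=\Theta/\tau_0$. When $s\ge 2$, one has $1+s\ge 3$ and the estimate above directly gives $\Phi(4\tau_0)\le 3\,e^{-s/8}\le(1+s)\,e^{-s/8}$. When $s\le 7$, the elementary computation $\bigl((1+s)e^{-s/8}\bigr)'=\frac{7-s}{8}\,e^{-s/8}\ge 0$ on $[0,7]$ together with the value $1$ at $s=0$ shows $(1+s)\,e^{-s/8}\ge 1$, so the trivial bound $\Phi(4\tau_0)\le\Phi(9\tau_0)=1$ suffices. The two regimes overlap on $s\in[2,7]$, so all $\Theta/\tau_0>0$ are covered. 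The argument is entirely elementary; the only minor obstacle lies in this last bridging step, since neither the integral comparison (which produces the constant $3$) nor the trivial bound $\Phi\le 1$ alone matches the stated prefactor across the full parameter range, and the two must be combined through the case split above.
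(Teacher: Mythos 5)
Your proof is correct, and it takes a genuinely different and somewhat more elementary route than the paper's. Where the paper integrates the identity $(\tau^2\phi'(\tau))'=(1+2\tau)\phi'(\tau)$ from $\tau_0$ to $9\tau_0$ and then invokes the monotonicity of $\phi'$ together with the mean value theorem to compare $\phi'(9\tau_0)$ with $\phi(9\tau_0)-\phi(\tau_0)$, you avoid the ODE entirely for the quantitative step: you simply bound the numerator integral $\int_{\tau_0}^{4\tau_0} e^{-\Theta/s}\,ds$ from above by the rectangle $3\tau_0 e^{-\Theta/(4\tau_0)}$ and the denominator from below by restricting to $[8\tau_0,9\tau_0]$ to get $\tau_0 e^{-\Theta/(8\tau_0)}$, yielding the clean bound $3e^{-\Theta/(8\tau_0)}$. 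The trade-off is exactly the one you identify: the prefactor $3$ does not directly match the stated prefactor $1+\Theta/\tau_0$ over the whole parameter range, so you close the argument with a case split on $s=\Theta/\tau_0$, using the trivial bound $\Phi\le 1$ together with $(1+s)e^{-s/8}\ge 1$ on $[0,7]$ for small $s$ and $1+s\ge 3$ for $s\ge 2$. The paper instead lands on the intermediate expression $\tfrac{3(1+18\tau_0)}{80\tau_0}\exp(-\tfrac{5}{36\tau_0})$ (after normalizing $\Theta=1$) and then asserts, without spelling out the arithmetic, that this implies the stated form; one can check it does, since $5/36>1/8$ and $\tfrac{3}{80\tau_0}+\tfrac{54}{80}\le 1+\tfrac{1}{\tau_0}$, but that step is left implicit. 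Your route makes the final comparison entirely transparent at the cost of an overt case distinction, while the paper's is more in the spirit of exploiting the ODE structure but leaves a small arithmetic verification to the reader. Both are complete and correct.
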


\begin{proof}
We can make the simplification so as to assume that $\Theta=1$; otherwise, consider the rescaled function $\tau\mapsto\phi(\Theta\tau)$. By definition of $\phi$ and $\Phi$, it is straightforward to check that 
\begin{align*}
&\phi(0)=\phi'(0)=\Phi(\tau_0)=\Phi'(\tau_0)=0,\\
&\phi,\ \phi',\ \phi''\ge0 {\quad\rm on\quad}\R_+. 
\end{align*}
Moreover, both $\phi$ and $\Phi$ solve the aforementioned ordinary differential equation with $\Theta=1$. Next, we observe that for $\tau\in[0,9\tau_0]$, 
\begin{align*}
(\tau^2\phi'(\tau))'=\tau^2\phi''(\tau) +2\tau\phi'(\tau)
=(1+2\tau)\phi'(\tau)\le(1+18\tau_0)\phi'(\tau). 
\end{align*}
Integrating both sides from $\tau_0$ to $9\tau_0$ yields
\begin{align*}
(9\tau_0)^2\phi'(9\tau_0)-\tau_0^2\phi'(\tau_0) \le(1+18\tau_0)\left[\phi(9\tau_0)-\phi(\tau_0)\right], 
\end{align*}
which, by the monotonicity of $\phi'$, implies 
\begin{align*}
\phi(9\tau_0)-\phi(\tau_0) \ge \frac{80\tau_0^2}{1+18\tau_0}\phi'(9\tau_0). 
\end{align*}
Additionally, by the mean value theorem and the monotonicity of $\phi'$ again, we obtain
\begin{align*}
0\le \phi(4\tau_0)-\phi(\tau_0)\le 3\tau_0\phi'(4\tau_0).
\end{align*}
Combining the above two estimates, we find
\begin{align*}
\frac{\phi(4\tau_0)-\phi(\tau_0)}{\phi(9\tau_0)-\phi(\tau_0)}
\le \frac{3(1+18\tau_0)}{80\tau_0} \frac{\phi'(4\tau_0)}{\phi'(9\tau_0)}
= \frac{3(1+18\tau_0)}{80\tau_0} \exp\left(-\frac{5}{36\tau_0}\right). 
\end{align*} 
This is sufficient to conclude the asserted upper bound for $\Phi(4\tau_0)$. 
\end{proof}

\begin{proposition}\label{barrier-exp}
Let $\zz=(\tt,\xx,\vv)=(0,\zero,\zero',\vv_d)\in\Sigma_-$, and let $f$ satisfy \eqref{KFP} subject to \eqref{Elliptic} with $S=0$ in $G_1(\zz)$ and $f=0$ on $\Sigma_-\cap G_1(\zz)$. There exists some constant $c_\Lambda\in(0,1)$ depending only on $d$ and $\Lambda$ such that for any $0\le-x_d\le|\vv_d|^3$, we have
\begin{align}\label{ssexp}
|f(0,\zero',x_d,\zero',\vv_d)|\le\|f\|_{L^\infty(G_1(\zz))}\exp\left(1-\frac{c_\Lambda\min\{|\vv_d|^3,1\}}{\langle v_0\rangle^6|x_d|}\right).  
\end{align}
\end{proposition}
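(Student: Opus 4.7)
The plan is to combine the supersolution estimate of Lemma~\ref{barrier-ss} with the explicit profile $\Phi$ from Lemma~\ref{ode}, and then to run a comparison argument on the shell $\PP_T$.

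First, I will fix the parameters $\rr,\kappa,\aa,\bb,\cc,\hh$ as in~\eqref{r-abc}, keeping $\rr$ free for the moment, and set $\tau_0:=\rho_0^2=|\vv_d|^2\rr$ and $\Theta:=C_0|\vv_d|^5$, with $C_0$ the constant from Lemma~\ref{barrier-ss}. I then take $\Phi:[\tau_0,\infty)\to\R_+$ to be the function constructed in Lemma~\ref{ode} with these values of $\tau_0$ and $\Theta$, normalized so that $\Phi(\tau_0)=0$ and $\Phi(9\tau_0)=1$. Since $\Phi$ solves $\tau^2\Phi''(\tau)=\Theta\Phi'(\tau)$, substituting into Lemma~\ref{barrier-ss} causes the two terms on its right-hand side to cancel exactly, yielding $\LL(\Phi\circ\rho_t^2)\ge 0$ throughout $\PP_T$.

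Next, I will compare $\pm f$ with $\|f\|_{L^\infty(G_1(\zz))}\,\Phi\circ\rho_t^2$ on a suitably time-truncated piece of $\PP_T$ contained in $G_1(\zz)$. The three pieces of the kinetic boundary are straightforward to handle: $f$ vanishes on $\Sigma_-\cap G_1(\zz)$; the inner level set $\{\rho_t=\rho_0\}$ collapses to the single point $\zz$ in $\overline{\OO_T}$, by the geometry of Lemma~\ref{hypodist1}~\ref{hdist2}; and on the outer level set $\{\rho_t=3\rho_0\}$ the normalization gives $\Phi\circ\rho_t^2=1$. The maximum principle of Lemma~\ref{max-su} therefore yields $|f|\le\|f\|_{L^\infty(G_1(\zz))}\,\Phi\circ\rho_t^2$ on $\PP_T\cap G_1(\zz)$.

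Finally, I will specialize to the target point $z_\star:=(0,\zero',x_d,\zero',\vv_d)$ and tune $\rr$. Using $\xi_d\in[\rr,4\rr/3]$ from~\eqref{xieta} together with $V_d(z_\star)=-\bb\xi_d/\cc$ and the parameter choices in~\eqref{r-abc}, a short quadratic computation reduces $\rho^2(z_\star)$ to the form $|\vv_d|^2\rr+|\vv_d|^2|x_d|^2/\rr+O(|\vv_d|^2|x_d|)$. Setting $\rr=|x_d|$ (which is admissible whenever $|x_d|\le\theta_0\min\{|\vv_d|^3,\langle v_0\rangle^{-6}\}$) places $z_\star$ squarely inside the shell with $\rho^2(z_\star)\le 4\tau_0$, and Lemma~\ref{ode} then delivers
\begin{align*}
\Phi\circ\rho^2(z_\star)\le\Big(1+\frac{C_0|\vv_d|^3}{|x_d|}\Big)\exp\Big(-\frac{C_0|\vv_d|^3}{8|x_d|}\Big).
\end{align*}
Absorbing the polynomial prefactor into the exponential in the regime where $|\vv_d|^3/|x_d|$ is large, and observing that~\eqref{ssexp} is automatic in the complementary regime once $c_\Lambda$ is chosen small enough (because then its right-hand side exceeds $\|f\|_{L^\infty(G_1(\zz))}$), yields the stated inequality. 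The factor $\min\{|\vv_d|^3,1\}/\langle v_0\rangle^6$ appearing inside the exponent is precisely what is needed to accommodate both regimes: when $|x_d|$ overshoots the admissibility threshold $\theta_0\langle v_0\rangle^{-6}$, one simply falls back on the trivial estimate.

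I expect the main technical hurdle to be the quadratic check that $\rho^2(z_\star)\le 4\tau_0$ under the tuning $\rr=|x_d|$; the requisite cancellation hinges on the precise cross-term coefficient $\bb=|\vv_d|$ forced by~\eqref{xieta}, which is exactly the feature of the hypocoercivity-flavoured weight $\rho_t$ that encodes the sharp exponential rate of decay toward the incoming boundary.
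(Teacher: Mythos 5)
Your proposal follows essentially the same route as the paper: take $\Phi$ from Lemma~\ref{ode} with $\Theta=C_0|\vv_d|^5$ and $\tau_0=|\vv_d|^2\rr$ so that Lemma~\ref{barrier-ss} gives $\LL(\Phi\circ\rho_t^2)\ge0$, compare on the shell $\PP_T$ using the maximum principle (with $f=0$ on $\Sigma_-$, $\Phi\circ\rho_t^2=1$ on the outer shell, and the inner shell degenerating to the single point $\zz$), and then tune $\rr\approx|x_d|$ via Lemma~\ref{ode}, dispatching the complementary regime trivially. One place to be careful: writing $\rho^2(z_\star)=|\vv_d|^2\rr+|\vv_d|^2 x_d^2/\rr+O(|\vv_d|^2|x_d|)$ makes the third term look negligible, but its leading coefficient is $2$ (from $-2|\vv_d|^2 x_d$ in expanding $\aa(x_d-\xi_d)^2$), so with $\rr=|x_d|$ all three contributions are comparable and $\rho^2(z_\star)\approx 3.98\,\tau_0$, squeaking under the threshold $4\tau_0$ only because $\xi_d=\frac{8}{\sqrt{63}}\rr$ is very close to $\rr$ (the coarse range $\xi_d\in[\rr,\tfrac43\rr]$ is not tight enough to justify the bound). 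The paper sidesteps this delicate arithmetic by inscribing a product cylinder $\QQ_1$ of radius $c_1\rr$ inside $\{\rho_0\le\rho_t\le 2\rho_0\}\cap\OO_T$ and then setting $\rr=c_1^{-1}|x_d|$, which also records the needed time and velocity containments without extra computation.
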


\begin{proof}
Let $\rr,\kappa,\aa,\bb,\cc,\hh,\theta_0,C_0>0$ be given by Lemma~\ref{barrier-ss}, the function $\rho_t(x,v)$ be defined in \eqref{rho-t}. For some $0<\rr\le\theta_0\min\{|v_d|^3,\langle v_0\rangle^{-6}\}$ to be chosen, we set 
\begin{align*}
\Theta:=C_0|\vv_d|^5 {\quad\rm and\quad }\rho_0^2:=|\vv_d|^2\;\!\rr. 
\end{align*}
Recalling the region $\PP_T$ defined in \eqref{rho-t}, we can assume that, for some constants $c_1,c_2>0$, 
\begin{align*}
\QQ_1\subset \{-10\;\!|\vv_d|\;\!\rr^{-1}<t\le0,\ \rho_0\le\rho_t\le2\rho_0 \}\cap\OO_T \subset
\{-10\;\!|\vv_d|\;\!\rr^{-1}<t\le0\}\cap\PP_T\subset\QQ_2, 
\end{align*}
where $\QQ_i:=(-c_i|\vv_d|^{-1}\rr,0]\times\left(B_{c_i\rr}(\xx)\cap\mathbb{H}_-^d\right)\times B_{c_i|\vv_d|}(\vv)$ for $i=1,2$. 
Define the function 
\begin{align*}
\Phi(\tau):=\frac{\phi(\tau)-\phi(\rho_0^2)}{\phi(9\rho_0^2)-\phi(\rho_0^2)}, 
\end{align*} 
where $\phi=\phi(\tau)$ is given in Lemma~\ref{ode}, ensuring that
\begin{align*}
\tau^2\Phi''(\tau)-\Theta\,\Phi'(\tau)=0. 
\end{align*}
The function $\Phi$ adheres to the upper bound established in Lemma~\ref{ode}, meaning that, for any $\rho_t\le2\rho_0$, we have 
\begin{align*}
\Phi(\rho_t^2)\le \left(1+\frac{\Theta}{\rho_0^2}\right) \exp\left(-\frac{\Theta}{8\rho_0^2}\right)
\le \exp\left(1-\frac{c_0|\vv_d|^3}{\rr}\right),
\end{align*} 
for some $c_0>0$ depending only on $d$ and $\Lambda$. Leveraging Lemma~\ref{barrier-ss} and the ordinary differential equation satisfied by $\Phi$, we obtain, for any $(t,x,v)\in\PP_T$, 
\begin{align*}
\LL\left(\Phi\circ\rho_t^2\right)\gtrsim |\vv_d|^{-2}\left(\Theta\,\Phi'\circ\rho_t^2 -\rho_t^4\,\Phi''\circ\rho_t^2\right)= 0. 
\end{align*}
In addition, we have $\Phi(9\rho_0^2)=1$. Applying the maximum principle (Lemma~\ref{max-su}) to the functions $\pm f-\|f\|_{L^\infty(\QQ_2)}\Phi\circ\rho_t^2$ in the region $(-10|\vv_d|^{-1}\rr,0]\cap\PP_T$ yields that 
\begin{align*}
|f(t,x,v)|\le \|f\|_{L^\infty(\QQ_2)}\;\!\Phi(4\rho_0^2)
\le \|f\|_{L^\infty(\QQ_2)}\exp\left(1-\frac{c_0|\vv_d|^3}{\rr}\right) {\quad\rm in\ }\QQ_1.  
\end{align*}
It remains to verify that \eqref{ssexp} follows. Indeed, for any $|x_d|\le c_1\theta_0\min\{|v_d|^3,\langle v_0\rangle^{-6}\}$, we set $\rr=c_1^{-1}|x_d|$ and then arrive at \eqref{ssexp} for $c_\Lambda\le c_0c_1$. If $c_1\theta_0\min\{|v_d|^3,\langle v_0\rangle^{-6}\}\le|x_d|\le|\vv_d|^3$, then \eqref{ssexp} holds trivially for $c_\Lambda\le \theta_0c_1$. We thus conclude the proof by picking $c_\Lambda:=c_1\min\{c_0,\theta_0\}$. 
\end{proof}

The conclusion of part~\ref{rough-asmp2} of Theorem~\ref{rough-asmp} then follows as an immediate consequence of the preceding proposition together with the boundary flattening procedure presented in Subsection~\ref{BFP}.

\subsection{H\"older regularity near the grazing boundary}
We now aim to prove Theorem~\ref{rough-holder}. In contrast to the $\lambda$-independent estimates at the incoming boundary $\Sigma_-$ involved in the preceding two subsections, the transport effect diminishes as one approaches the grazing set $\Sigma_0$. Geometrically, the condition~\eqref{vrs} in part~\ref{vr2} of Lemma~\ref{hypodist2} is no longer satisfied in this regime; one may compare Figure~\ref{img-in} with Figure~\ref{img-g}. To derive a certain amount of regularity near $\Sigma_0$, it becomes essential to fully exploit the hypoelliptic structure of the operator $\LL=\partial_t+v\cdot\nabla_x- A:D^2_v-B\cdot\nabla_v$, wherein the interplay between the transport term and the ellipticity of the matrix $A$ has a meaningful role. 

Note that, near the grazing boundary, it is not necessary to introduce the time-dependent function $\rho_t$ involving the spatial translation $X_d^t$ set up in \S~\ref{evolu}. Indeed, one may always assume $|\vv_d|\le 1$ in a neighborhood of the grazing boundary $\Gamma_0=\{x_d=0,v_d=0\}$. Let us proceed accordingly. 

\begin{lemma}\label{barrier-g}
Let $(\xx,\vv)=(\zero,\zero',\vv_d)\in\Gamma_-$, and let $\rho$ and $\PP$ be defined in Lemma~\ref{hypodist1} associated with $\rr,\kappa,\aa,\bb,\cc>0$. There exist some constants $C_\Lambda,m\ge1$ and $\theta_0\in(0,\frac{1}{32}]$ depending only on $d,\lambda,\Lambda$ such that, if we choose 
\begin{align}\label{abc-g}
\begin{aligned}
\rr\le\min\big\{|\vv_d|^3,\langle v_0\rangle^{-6}\big\},\quad\kappa:=\frac{\sqrt{\theta_0}}{256},\\
\aa:=\rr^{-\frac{2}{3}},\quad \bb:=\theta_0,\quad 
\cc:=2\theta_0\;\!\rr^{\frac{2}{3}}, \quad \hh:=\theta_0, 
\end{aligned}
\end{align}
then these parameters satisfy the conditions \eqref{vr} and \eqref{abc}; furthermore, we have 
\begin{align*}
\LL\left(\varphi(\rho-\hh t)\right) \gtrsim \rr^{-\frac{2}{3}} {\quad\rm in\ \,}(-\infty,0]\times\PP, 
\end{align*}
where the function $\varphi:[\rho_0,\infty)\to\R_+$, with $\rho_0:=\rr^\frac{2}{3}$, is defined by   
\begin{align*}
\varphi(\rho):=\frac{\rho^{-m}-\rho_0^{-m}}{(3\rho_0)^{-m}-\rho_0^{-m}}. 
\end{align*}
\end{lemma}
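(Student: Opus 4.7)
\medskip
\noindent\textbf{Proof proposal.}

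The plan is to treat $\psi(t,x,v):=\varphi(g)$ with $g:=\rho-\hh t$ as a one-variable composition and reduce everything to sign control of an explicit bracket. First I would verify that the choices in \eqref{abc-g} meet \eqref{vr} and \eqref{abc}: with the stated values one computes $\sqrt{\aa\cc}=\sqrt{2\theta_0}$ against $8\bb=8\theta_0$, giving $\sqrt{\aa\cc}\ge 8\bb$ exactly when $\theta_0\le 1/32$; $\aa/\cc=(2\theta_0\rr^{4/3})^{-1}\ge 4$ and $\sqrt{\aa/\cc}\,\rr=(2\theta_0)^{-1/2}\rr^{1/3}\le\langle v_0\rangle$ follow from $\rr\le\min\{1,\langle v_0\rangle^{-6}\}$ once $\theta_0$ is small; and $2\bb\rr/\cc=\rr^{1/3}\le|\vv_d|$ is just the hypothesis $\rr\le|\vv_d|^3$.

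Next I would differentiate using the explicit expressions $\nabla_v\rho=\nabla_vQ/(2\rho)$ and $D_v^2\rho=\cc I_d/\rho-(\nabla_vQ)^{\otimes 2}/(4\rho^3)$, where $Q=\rho^2$. A direct computation yields
\begin{align*}
\LL\psi=\varphi'(g)\bigl[-\hh+v\cdot\nabla_x\rho-A\!:\!D_v^2\rho-B\cdot\nabla_v\rho\bigr]-\varphi''(g)\,A\!:\!(\nabla_v\rho)^{\otimes 2}.
\end{align*}
The key identity $-\varphi''(g)/\varphi'(g)=(m+1)/g$, valid for this $\varphi$, lets me factor out $\varphi'(g)>0$ and group everything into a single bracket
\begin{align*}
\mathcal{B}:=-\hh+v\cdot\nabla_x\rho-A\!:\!D_v^2\rho-B\cdot\nabla_v\rho+\tfrac{m+1}{g}\,A\!:\!(\nabla_v\rho)^{\otimes 2}.
\end{align*}
The whole game is then to show $\mathcal{B}\gtrsim\rr^{-2/3}$ in $(-\infty,0]\times\PP$, which yields the claimed lower bound after combining with $\varphi'(g)$.

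The bracket is estimated term by term. For the transport term I would split $v_d=\eta_d+V_d$ and rely on part~\ref{vr1} of Lemma~\ref{hypodist2} (which applies thanks to the verified condition \eqref{vr}) to get the coercive contribution
\begin{align*}
\frac{\eta_d(\aa X_d-\bb V_d)}{\rho}\ge\frac{\aa\,\rr\,|\vv_d|}{4\rho}\ge\frac{\aa\,\rr^{4/3}}{4\cdot 3\rho_0}\gtrsim\rr^{-2/3}.
\end{align*}
The residual pieces $\aa\kappa^2 v'\!\cdot\! x'/\rho$ and $V_d(\aa X_d-\bb V_d)/\rho$ are controlled using the ranges \eqref{range} from Lemma~\ref{hypodist1}; the factor $\kappa=\sqrt{\theta_0}/256$ is chosen precisely so that these absorb into a small fraction (say $1/4$) of the coercive contribution. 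For the diffusion I would use $-A\!:\!D_v^2\rho=-\cc\,\mathrm{tr}(A)/\rho+A\!:\!(\nabla_vQ)^{\otimes 2}/(4\rho^3)$; the first piece is bounded by $d\Lambda\cc/\rho_0=2d\Lambda\theta_0$, small for small $\theta_0$, while the second is nonnegative and simply reinforces the ellipticity reserve $\tfrac{m+1}{g}A\!:\!(\nabla_v\rho)^{\otimes 2}\ge\tfrac{(m+1)\lambda}{g}|\nabla_v\rho|^2$. The drift $|B\!\cdot\!\nabla_v\rho|\le C_B|\nabla_v\rho|$ is handled by Cauchy--Schwarz,
\begin{align*}
C_B|\nabla_v\rho|\le\frac{\lambda(m+1)}{2g}|\nabla_v\rho|^2+\frac{C_B^2\,g}{2\lambda(m+1)},
\end{align*}
the first piece being absorbed into the ellipticity reserve and the second being controlled by choosing $m$ large enough (recalling $C_B\lesssim\langle v_0\rangle^2$ and $\rr\le\langle v_0\rangle^{-6}$, so $C_B^2\rr^{2/3}\lesssim 1$).

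Fixing first $\theta_0$ small enough to make the transport coercivity dominate the $\aa\kappa^2 v'\!\cdot\! x'$, $V_d$, and $\cc\,\mathrm{tr}(A)/\rho$ errors, and then $m$ large enough (both depending only on $d,\lambda,\Lambda$) to absorb the drift, yields $\mathcal{B}\gtrsim\rr^{-2/3}$, hence the claim. The main obstacle is the balancing act at \emph{critical points} where $\cc V_d\approx\bb X_d$, so that $\partial_{v_d}\rho$ nearly vanishes and the ellipticity reserve becomes weak: there one has no choice but to rely on the transport coercivity through $\eta_d$, which is why part~\ref{vr1} (rather than the stronger part~\ref{vr2}) of Lemma~\ref{hypodist2} is the pivotal input; making $\kappa$ small and $m$ large is precisely what decouples these two regimes and lets the single bracket $\mathcal{B}$ dominate uniformly in~$\PP$.
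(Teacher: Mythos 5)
Your setup is right—composing $\varphi$ with $g=\rho-\hh t$, factoring out $\varphi'$ via $-\varphi''/\varphi'=(m+1)/g$, and splitting $v_d=\eta_d+V_d$ to isolate the transport coercivity through part~\ref{vr1} of Lemma~\ref{hypodist2}—but there is a genuine gap at the step where you claim that the residual piece $V_d(\aa X_d-\bb V_d)/\rho$ ``absorb[s] into a small fraction of the coercive contribution.'' It does not, and the choice of $\kappa$ has nothing to do with it: $\kappa$ only scales the $\kappa^2\aa\,x'\cdot v'$ piece. Quantitatively, with the parameters of \eqref{abc-g} one has $|V_d|\lesssim\theta_0^{-1/2}\rr^{1/3}$ and $|\aa X_d-\bb V_d|\approx\aa|X_d|\approx\rr^{1/3}$, so $|V_d(\aa X_d-\bb V_d)|\approx\theta_0^{-1/2}\rr^{2/3}$, whereas the coercive contribution $\eta_d(\aa X_d-\bb V_d)\ge\tfrac14\aa\;\!\rr\;\!|\vv_d|\ge\tfrac14\rr^{2/3}$. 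For small $\theta_0$ the residual dominates the coercive term by a factor $\theta_0^{-1/2}$, so no termwise sign control of your bracket $\mathcal{B}$ is possible.

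The paper closes exactly this gap in a qualitatively different way: it does not separate $V_d(\aa X_d-\bb V_d)$ from the ellipticity reserve, but groups the two into the quadratic form
\[
Q(X_d,V_d)=p_m\bb^2X_d^2+(\aa-2p_m\bb\cc)X_dV_d+(p_m\cc^2-\bb)V_d^2,\qquad p_m:=(m+2)\lambda\rho^{-2},
\]
and shows by a discriminant computation (under the constraint $p_m\ge\aa/(\bb\cc)$, which is what forces $m$ to be large) that $Q$ is positive definite with $Q_{\min}\gtrsim\aa\bb\;\!\rr^2/\cc\approx\rr^{2/3}$ on $\{|X_d|\ge\rr\}$. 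This coupled estimate is the essential hypoelliptic mechanism of the lemma: the bad transport residual is only controlled after being combined with the ellipticity reserve, precisely because the two are large in complementary regions (the ellipticity weakens when $\cc V_d\approx\bb X_d$, but there the transport residual changes sign and becomes helpful). Your proposal gestures at this at the very end (``decouples these two regimes'') but never carries out the coupling; the role you assign to $m$ (absorbing the drift via Cauchy--Schwarz) is a side issue—the paper handles the drift directly and $m$ is actually needed for $p_m\ge\aa/(\bb\cc)$. Separately, your verification of $\sqrt{\aa/\cc}\,\rr\le\langle v_0\rangle$ has the logic reversed: $\sqrt{\aa/\cc}\,\rr=(2\theta_0)^{-1/2}\rr^{1/3}$ \emph{grows} as $\theta_0$ decreases, so ``once $\theta_0$ is small'' makes the bound harder, not easier; what is actually used is that $\rr$ can be taken small for the applications in Proposition~\ref{osc-g}.
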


\begin{proof}
Within the setting of \eqref{abc-g}, the conditions \eqref{vr} and \eqref{abc} are satisfied for any $\theta_0\in(0,\frac{1}{32}]$. According to Lemma~\ref{hypodist1}, we consider the phase domain 
\begin{align*}
\PP=\{\rho_0\le\rho\le3\rho_0\}\cap\OO {\quad\rm for\ \ }
\rho_0=\sqrt{\aa}\;\!\rr=\rr^\frac{2}{3}.
\end{align*}
A direct computation yields that 
\begin{align*}
\partial_t\left(\varphi(\rho-\hh t)\right) &= -\hh\;\!\varphi',\\
\nabla_x\left(\varphi(\rho-\hh t)\right) &= \varphi'\rho^{-1} \left(\kappa^2\aa\;\!x',\aa X_d-\bb V_d\right),\\
\nabla_v\left(\varphi(\rho-\hh t)\right) &= \varphi'\rho^{-1} \left(\cc\;\!v',\,\cc V_d-\bb X_d\right),\\
D_v^2\left(\varphi(\rho-\hh t)\right)&= \varphi'\rho^{-1}\cc I_d +\left(\varphi''\rho^{-2}-\varphi'\rho^{-3}\right)(\cc\;\!v',\cc V_d-\bb X_d)^{\otimes 2}, 
\end{align*} 
where we abbreviated $\varphi'=\varphi'(\rho-\hh t)$ and $\varphi''=\varphi''(\rho-\hh t)$. It follows that 
\begin{align*}
v\cdot\nabla_x\left(\varphi(\rho-\hh t)\right) &= \varphi'\rho^{-1} \left(v',v_d\right)\cdot\left(\kappa^2\aa\;\!x',\aa X_d-\bb V_d\right)\\
&= \varphi'\rho^{-1} \left(\kappa^2\aa\;\!x'\cdot v' +V_d\left(\aa X_d-\bb V_d\right) +\eta_d\left(\aa X_d-\bb V_d\right) \right). 
\end{align*}
Observing from its definition that $\varphi'\ge0$ and $\rho\;\!\varphi''=-(m+1)\varphi'\le0$, we derive  
\begin{align*}
A:D_v^2\varphi &= \varphi'\rho^{-1}\cc\;\!A:I_d +\left(\varphi''\rho^{-2}-\varphi'\rho^{-3}\right)A:(\cc\;\!v',\cc V_d-\bb X_d)^{\otimes 2}. \\
&\le d\Lambda\;\!\cc\;\!\varphi'\rho^{-1}  -(m+2)\;\!\lambda\;\!\varphi'\rho^{-3}|\cc V_d-\bb X_d|^2.
\end{align*}
By collecting the above two computations, we obtain 
\begin{align}\label{LLL}
\begin{aligned}
\LL\left(\varphi(\rho-\hh t)\right)\ge&\; \varphi'\rho^{-1}\left(Q(X_d,V_d)+\eta_d\;\!(\aa X_d-\bb V_d)\right) -\hh\;\!\varphi'\\
&-\varphi'\rho^{-1}\left(\kappa^2\aa\;\!|x'||v'| + d\Lambda\;\!\cc + C_B\;\!\cc|v'| + C_B|\cc V_d-\bb X_d| \right). 
\end{aligned}
\end{align}
Here, with the abbreviation
\begin{align*}
p_m:=(m+2)\;\!\lambda\;\!\rho^{-2}, 
\end{align*}
the quadratic form $Q:\R\times\R\to\R$ above is defined by
\begin{align*}
Q(X_d,V_d):=p_m\bb^2X_d^2 +\left(\aa-2p_m\bb\cc\right)X_d\;\!V_d +\left(p_m\cc^2-\bb\right)V_d^2. 
\end{align*}
In view of the ranges of $|x'|,|v'|,|X_d|,|V_d|$ specified by \eqref{range}, along with \eqref{abc}, we have 
\begin{align}\label{LLLo}
\kappa^2\aa\;\!|x'||v'| + d\Lambda\;\!\cc +C_B\;\!\cc|v'| + C_B|\cc V_d-\bb X_d|
\le 32\kappa\;\!\aa\sqrt{\frac{\aa}{\cc}}\;\!\rr^2 + C_\Lambda\cc+ C_B\sqrt{\aa\cc}\;\!\rr, 
\end{align}
for some constant $C_\Lambda>0$ depending only on $d$ and $\Lambda$. Armed with \eqref{vr}, we apply part~\ref{vr1} of Lemma~\ref{hypodist2} to \eqref{LLL}, combined with \eqref{LLLo}, to deduce that for any $(t,x,v)\in(-\infty,0]\times\PP$, 
\begin{align}\label{psilemma0}
\LL\left(\varphi(\rho-\hh t)\right) \ge \varphi'\rho^{-1} \bigg(Q_{\min}+\frac{\aa\;\!\rr\;\!|\vv_d|}{4} -32\kappa\;\!\aa\sqrt{\frac{\aa}{\cc}}\;\!\rr^2 -C_\Lambda\cc -C_B\sqrt{\aa\cc}\;\!\rr\bigg)-\hh\;\!\varphi' . 
\end{align} 
Here we are set to establish a lower bound for $Q_{\min}:=\min_{(x,v)\in\PP}Q(X_d,V_d)$. Assuming that
\begin{align}\label{pm}
p_m=(m+2)\;\!\lambda\;\!\rho^{-2}\ge\frac{\aa}{\bb\cc}, 
\end{align}
we know from $\sqrt{\aa\cc}\ge8\bb$ that $p_m\cc^2-\bb>0$. The discriminant $\varDelta_Q$ of $Q(\cdot,\cdot)$ is given by 
\begin{align*}
\varDelta_Q:\!&= (2p_m\bb\cc-\aa)^2 -4p_m\bb^2(p_m\cc^2-\bb)\\
&=-4\;\!p_m\aa\bb\cc+4\;\!p_m\bb^3+\aa^2. 
\end{align*}
Under the assumptions \eqref{pm} and $\sqrt{\aa\cc}\ge8\bb$, it is negative; to be more precise, 
\begin{align*}
-p_m^{-1}\varDelta_Q = 4\;\!\aa\bb\cc-4\;\!\bb^3-p_m^{-1}\aa^2 \ge 2\;\!\aa\bb\cc. 
\end{align*}
With the constraint $|X_d|\ge\rr$ from \eqref{range}, the quantity $Q_{\min}$ is thus positive and can be bounded from below as 
\begin{align*}
Q_{\min} \ge \frac{-\varDelta_Q\;\!X_d^2}{4\left(p_m\cc^2-\bb\right)}
\ge \frac{\aa\bb\;\!\rr^2}{2\;\!\cc}. 
\end{align*}
Combining this with \eqref{psilemma0}, we obtain, for any $(t,x,v)\in(-\infty,0]\times\PP$, 
\begin{align}\label{psilemma1}
\LL\left(\varphi(\rho-\hh t)\right) \ge&\; \varphi'\rho^{-1} \bigg(\frac{\aa\bb\;\!\rr^2}{2\;\!\cc}+\frac{\aa\;\!\rr\;\!|\vv_d|}{4} - 32\kappa\;\!\aa\sqrt{\frac{\aa}{\cc}}\;\!\rr^2 -C_\Lambda\;\!\cc -C_B\sqrt{\aa\cc}\;\!\rr\bigg) -\hh\;\!\varphi'. 
\end{align}

Under the setting of \eqref{abc-g}, let $\theta_0>0$ be sufficiently small so that
\begin{align*}
32\;\!C_\Lambda\;\!\theta_0+16\;\!C_B\sqrt{2\theta_0}\;\!\langle v_0\rangle ^{-2}\le1.
\end{align*} 
Then, we have
\begin{align}\label{psi-au1}
\begin{aligned}
\frac{\aa\bb\;\!\rr^2}{4\;\!\cc} -32\kappa\;\!\aa\sqrt{\frac{\aa}{\cc}}\;\!\rr^2&=\frac{1}{8}\;\!\rr^\frac{2}{3} \left(1-\frac{1}{\sqrt{2}} \right)\ge0,\\
\frac{\aa\bb\;\!\rr^2}{4\;\!\cc} -C_\Lambda\;\!\cc -C_B\sqrt{\aa\cc}\;\!\rr
&= \frac{1}{8}\;\!\rr^\frac{2}{3} \left(1-16C_\Lambda\theta_0 -8C_B\sqrt{2\theta_0}\;\!\rr^\frac{1}{3} \right)
\ge \frac{1}{16}\;\!\rr^\frac{2}{3}. 
\end{aligned}
\end{align} 
Since $\rho\le3\rho_0=3\;\!\rr^{\frac{2}{3}}$ in $\PP$, we can verify \eqref{pm} by selecting $m=5\lambda^{-1}\theta_0^{-2}$; indeed, we have
\begin{align*}
p_m = (m+2)\;\!\lambda\;\!\rho^{-2}
\ge \frac{m+2}{9}\;\!\lambda\;\!\rr^{-\frac{4}{3}} 
>\frac{1}{2}\;\!\theta_0^{-2}\;\!\rr^{-\frac{4}{3}}=\frac{\aa}{\bb\cc}. 
\end{align*} 
Besides, it is straightforward to check that
\begin{align}\label{psi-au2}
\frac{\aa\;\!\rr\;\!|\vv_d|}{4\;\!\rho}\ge\frac{\aa\;\!\rr^\frac{4}{3}}{12\;\!\rho_0}=\frac{1}{12}>\hh. 
\end{align}
We thus deduce from \eqref{psilemma1}, \eqref{psi-au1} and \eqref{psi-au2} that 
\begin{align*}
\LL\left(\varphi(\rho-\hh t)\right) \ge \frac{1}{16}\;\!\varphi'\rho^{-1}\;\! \rr^\frac{2}{3}\ge \frac{1}{48}\;\!\varphi' {\quad\rm in\ \,}(-\infty,0]\times\PP. 
\end{align*}
Noting from its definition that 
\begin{align*}
\varphi'=m\;\!\rho^{-m-1}(\rho_0^{-m}-(3\rho_0)^{-m})\approx\rho_0^{-1}=\rr^{-\frac{2}{3}}, 
\end{align*}
we hence conclude the proof. 
\end{proof}

\begin{proposition}\label{osc-g}
Let $r_0\in(0,\langle v_0\rangle^{-2}]$, and let $f$ be a solution of \eqref{KFP} subject to \eqref{Elliptic} in $G_{r_0}$ and $f=f_b$ on $\Sigma_-$. There are some constants $\delta,c\in(0,1)$ and $C>0$ depending only on $d,\lambda,\Lambda$ such that 
\begin{align*}
{\;\!\rm osc\;\!}_{G_{cr_0}} f\le \delta{\;\!\rm osc\;\!}_{G_{r_0}} f +{\;\!\rm osc\;\!}_{G_{r_0}} f_b +C\|S\|_{L^\infty(G_{r_0})}. 
\end{align*}
\end{proposition}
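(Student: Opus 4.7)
The strategy is a super-solution comparison using the grazing barrier of Lemma~\ref{barrier-g}, combined with a dichotomy on the extrema of $f$. By Lemma~\ref{flatten} and Remark~\ref{remark-reduction}, we may work on the half-space problem centered at $\zz=(0,\zero,\zero',\vv_d)$ with $|\vv_d|$ small, the strictly incoming regime being subsumed by Proposition~\ref{phase-prop}. Write $M,m$ and $M_b,m_b$ for the extrema of $f$ on $G_{r_0}$ and of $f_b$ on $\Sigma_-\cap G_{r_0}$, and set $\omega:=M-m$, $\omega_b:=M_b-m_b$, $\sigma:=\|S\|_{L^\infty(G_{r_0})}$. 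If $\omega_b\ge\omega$ the claim is immediate; otherwise, the identity $(M-M_b)+(m_b-m)=\omega-\omega_b$ forces at least one summand to be $\ge(\omega-\omega_b)/2$, and after possibly passing to $-f$ we assume $M-M_b\ge(\omega-\omega_b)/2$.

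For a family of base points $\{(\xx_i,\vv_i)\}\subset\Gamma_-\cap Q_{r_0}$ with $\vv_{i,d}\asymp-r_0^{1/3}$ (the smallest normal velocity compatible with the constraint $\rr\le|\vv_d|^3$ in \eqref{abc-g}), apply Lemma~\ref{barrier-g} with $\rr\asymp r_0$ to obtain barriers $\Psi_i:=\varphi(\rho_i-\hh t)$ on annular regions $\PP_i\cap\OO$ satisfying $\LL\Psi_i\gtrsim r_0^{-2/3}$. Taking $T:=2\rho_0/\hh\asymp r_0^{2/3}$ ensures $\Psi_i\ge1$ on both the initial face $\{t=-T\}$ and the outer face $\{\rho_i=3\rho_0\}$, while the inner face $\{\rho_i=\rho_0\}\cap\OO$ collapses to the single tangency point $(\xx_i,\vv_i)$ by a direct computation within Lemma~\ref{hypodist1}. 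The super-solution
\begin{align*}
\phi_i:=M_b+(\omega+C_0\sigma\,r_0^{2/3})\,\Psi_i,
\end{align*}
with $C_0$ large enough that $\LL\phi_i\ge\sigma$, satisfies $\phi_i\ge M\ge f$ on the outer and initial faces, and $\phi_i\ge M_b\ge f_b=f$ on $\Sigma_-\cap\PP_i$. The maximum principle (Lemma~\ref{max-su}) yields $f\le\phi_i$ throughout $\PP_i\cap\OO$ at $t=0$; on the sub-level set $\mathcal{V}_i:=\{\Psi_i|_{t=0}\le1/4\}$ we obtain
\begin{align*}
f\le M_b+\tfrac14\omega+C\sigma\le M-\tfrac12(\omega-\omega_b)+\tfrac14\omega+C\sigma=M-\tfrac14\omega+\tfrac12\omega_b+C\sigma,
\end{align*}
which together with the trivial $f\ge m$ gives $\mathrm{osc}\le\tfrac34\omega+\tfrac12\omega_b+C\sigma$ on $\mathcal{V}_i$, matching the claim with $\delta=3/4$.

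It remains to cover $G_{cr_0}$. By varying the base points on an anisotropic grid in $\Gamma_-$ tuned to the scales of \eqref{range}, the union of the $\mathcal{V}_i$'s covers the incoming portion $\{v_d\le0\}$ of $G_{cr_0}$ for some explicit $c=c(d,\lambda,\Lambda)\in(0,1)$; the outgoing portion $\{v_d>0\}$ is handled by an analogous argument, using that transport characteristics with $v_d>0$ and $x_d\le0$ stay inside the half-space when traced backwards in time and thus reduce to bounds on $f$ already obtained at earlier time slices. The principal technical obstacle is precisely this covering: the sub-level set $\mathcal{V}_i$ is a narrow anisotropic neighborhood of a single tangency point, and the constraint $\rr\le|\vv_d|^3$ forbids placing this tangency on the grazing set itself, so the covering must accommodate base points strictly inside $\Gamma_-$ while retaining a quantitative constant $c$ depending only on $d,\lambda,\Lambda$.
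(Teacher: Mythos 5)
Your overall strategy---comparison with a barrier built from Lemma~\ref{barrier-g} via the maximum principle---is the correct approach, but your proposal contains a scaling error that would make the comparison fail, and the covering machinery you introduce (and flag as the ``principal technical obstacle'') is avoided entirely in the paper by a sharper setup.

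\textbf{Scaling error.} You set $\rr\asymp r_0$ and $\vv_{i,d}\asymp -r_0^{1/3}$, which leads to $\LL\Psi_i\gtrsim r_0^{-2/3}$ and the additive correction $C_0\sigma\,r_0^{2/3}$ in $\phi_i$. The correct scaling is $\rr\asymp r_0^3$ and $\vv_{d}\asymp -r_0$: the region $\PP$ of Lemma~\ref{hypodist1} has $|X_d|\lesssim\rr$ and $|V_d|\lesssim\sqrt{\aa/\cc}\,\rr\approx\rr^{1/3}$, so for $\PP$ to live at the scale of the kinetic cylinder $G_{r_0}$ (whose $x$-extent is $r_0^3$ and $v$-extent is $r_0$) one must take $\rr\approx r_0^3$, giving $\LL\Psi\gtrsim\rr^{-2/3}\approx r_0^{-2}$ and the source weight $\sigma r_0^2$. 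With your choice $\rr\asymp r_0$, the annular region $\PP_i$ has $x$-extent $\sim r_0$ and $v$-extent $\sim r_0^{1/3}$, both of which exceed the corresponding extents $r_0^3$ and $r_0$ of $G_{r_0}$. Thus $\PP_i\not\subset G_{r_0}$, and your verification that $\phi_i\ge M\ge f$ on the outer and initial faces of $\PP_i$ breaks down: those faces lie outside $G_{r_0}$, where $M={\rm osc}$-type bounds on $f$ are unavailable.

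\textbf{The covering is unnecessary.} The paper places a \emph{single} base point at $(\xx,\vv)=(\zero,\zero',-\rr^{1/3})$ and checks by a direct three-line computation that the grazing origin $(\zero,\zero)$ lies in $\{\rho_0\le\rho\le 2\rho_0\}\cap\OO\subset\PP$; after the kinetic rescaling, this forces $G_{cr_0}\subset\QQ$ for a suitable $c$, so one barrier dominates the entire sub-cylinder. Your intuition that the inner level set $\{\rho=\rho_0\}\cap\OO$ collapses to a point is correct, but you infer from it that the sub-level set $\mathcal{V}_i$ is a ``narrow anisotropic neighborhood'' of that point. That inference is driven by your choice of a universal threshold $\Psi_i\le\tfrac14$: since $\varphi$ is steep near $\rho_0$ for large $m=5\lambda^{-1}\theta_0^{-2}$, the set $\{\varphi\le\tfrac14\}$ is indeed thin and need not contain the origin. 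The paper instead sets $\delta:=\varphi(2\rho_0)$, so the relevant sub-level set is the full half-ellipsoid $\{\rho\le2\rho_0\}\cap\OO$ at the natural scale; the price is that $\delta$ depends on $\lambda$ (through $m$), which is allowed by the statement. With this choice, your covering (and the separate treatment of $\{v_d>0\}$ via backward characteristics, which you leave informal) is not needed---the single barrier already controls velocities of both signs, since the velocity center $\eta_d\approx -r_0/2$ and the velocity radius $\approx r_0$ place the whole band $v_d\in(-\tfrac32 r_0, \tfrac12 r_0)$ inside $\PP$.

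\textbf{Minor.} The dichotomy $(M-M_b)+(m_b-m)=\omega-\omega_b$ and the passage to $-f$ is a detour. The paper applies the maximum principle directly to $f-\sup_{G_r\cap\Sigma_-}f-M_*\varphi(\rho-\hh t)$ and then to the analogous expression with $f$ replaced by $-f$, and sums; this produces the oscillation inequality in one stroke without the case split.
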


\begin{proof}
Let $\rr\in(0,\langle v_0\rangle^{-6}]$, and $\kappa,\aa,\bb,\cc,\hh>0$ be specified in \eqref{abc-g} of Lemma~\ref{barrier-g}. Consider a point $(\xx,\vv)\in\Gamma_-$ with $\xx=\zero$, $\vv=(\zero',\vv_d)$ and $\vv_d=-\rr^\frac{1}{3}$. We examine the function $\varphi(\rho-\hh t)$ as introduced in Lemma~\ref{barrier-g}. Recall the region of interest is $\PP=\{\rho_0\le\rho\le3\rho_0\}\cap\OO$ for $\rho_0=\rr^\frac{2}{3}$. 
\begin{figure}
\includegraphics[width=9.35cm]{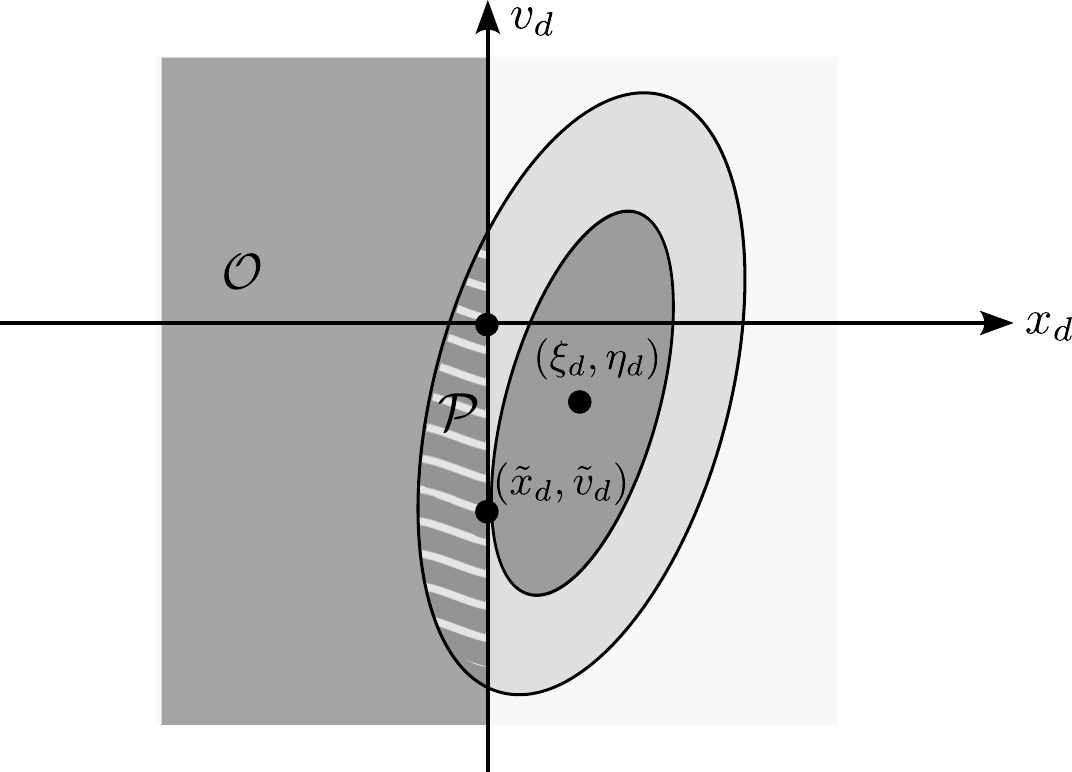}\\\vspace{-0.15cm}
\caption{The region $\PP$ that contains the grazing set.}
\label{img-g}
\end{figure}
We now check that 
\begin{align}\label{PP0}
(x_0,v_0)=(\zero,\zero)\in\{\rho_0\le\rho\le2\rho_0\}\cap\OO\subset\PP. 
\end{align}
One may refer to Figure~\ref{img-g}. Indeed, it suffices to verify that 
\begin{align*}
\rho^2(x_0,v_0)=\aa\;\!\xi_d^2-2\bb\;\!\xi_d\;\!\eta_d+\cc\;\!\eta_d^2
\le4\rho_0^2=4\;\!\rr^\frac{4}{3}. 
\end{align*}
We know from \eqref{xieta} of Lemma~\ref{hypodist1} and \eqref{abc-g} of Lemma~\ref{barrier-g} that 
\begin{align*}
&\rr\le \xi_d\le \frac{4}{3}\;\!\rr\\
&|\eta_d|=|\vv_d|-\frac{\bb\;\!\xi_d}{\cc}
=\rr^\frac{1}{3}-\frac{\xi_d}{2}\;\!\rr^{-\frac{2}{3}}\le \frac{1}{2}\;\!\rr^\frac{1}{3}. 
\end{align*}
From our choice of $\aa,\bb,\cc$ in \eqref{abc-g} with $\theta_0\in(0,\frac{1}{32}]$, it turns out that
\begin{align*}
\aa\;\!\xi_d^2-2\bb\;\!\xi_d\;\!\eta_d+\cc\;\!\eta_d^2
\le \frac{16}{9}\;\!\rr^{\frac{4}{3}} + \frac{4}{3}\;\!\theta_0\;\!\rr^\frac{4}{3} +\frac{1}{2}\;\!\theta_0\;\!\rr^\frac{4}{3}
\le 4\;\!\rr^{\frac{4}{3}},
\end{align*}
which implies \eqref{PP0} as claimed. Furthermore, we can assume that
\begin{align*}
G_r &\subset \QQ:=\big\{-10\;\!\theta_0^{-1}\;\!\rr^\frac{2}{3}<t\le0,\ \rho_0\le\rho- \hh t\le 2\rho_0 \big\}\cap\OO_T \\
&\subset \big(-10\;\!\theta_0^{-1}\;\!\rr^\frac{2}{3},0\big]\times\PP\subset G_R, 
\end{align*}
with $r:=cR$ and $\rr:=c'R^3$ for some constants $c,c'>0$ so that $r\approx R\approx\rr^\frac{1}{3}\le\langle v_0\rangle^{-2}$. 

Let set $\delta:=\varphi(2\rho_0)\in(0,1)$. It follows from Lemma~\ref{barrier-g} that, for some constant $c_0>0$, 
\begin{align*}
\left\{\begin{aligned}
\,&\LL\left(\varphi(\rho-\hh t)\right)\ge c_0R^{-2}  & &{\rm in\ \;} \QQ,\\
\,&\;\varphi(\rho-\hh t)\le \delta & &{\rm in\ \;} G_r, \\
\,&\;\varphi(\rho-\hh t)\ge1 & &{\rm on\ \;} \partial\QQ\backslash\{x_d=0, {\;\rm or\ }t=0\}. 
\end{aligned}\right. 
\end{align*} 
Define $M_*:=\sup_{G_R}f+c_0^{-1}R^2\;\!\|S\|_{L^\infty(G_R)}$. Applying the maximum principle (Lemma~\ref{max-su}) to the function $f-\sup_{G_r\cap\Sigma_-}f-M_*\varphi(\rho-\hh t)$, we conclude that 
\begin{align*}
f\le \sup\nolimits_{G_r\cap\Sigma_-}f + \delta\sup\nolimits_{G_R}f + C\|S\|_{L^\infty(G_R)}  {\quad\rm in\ }G_r. 
\end{align*}
Replacing $f$ by $-f$, we have 
\begin{align*}
-f\le -\inf\nolimits_{G_r\cap\Sigma_-}f - \delta\inf\nolimits_{G_R}f + C\|S\|_{L^\infty(G_R)} {\quad\rm in\ }G_r. 
\end{align*}
Summing the two estimates above leads to the desired result. 
\end{proof}

Theorem~\ref{rough-holder}, asserting the H\"older regularity of the solution on the grazing boundary, is a standard consequence of the oscillation decay presented in Proposition~\ref{osc-g}. 

\section{Regular coefficients case}\label{sec-regular}
This section is concerned with establishing Theorem~\ref{regular-Schauder}, which addresses higher-order regularity away from the grazing boundary, and Theorem~\ref{regular-optimal}, which characterizes the optimal H\"older regularity near the grazing boundary. By virtue of the boundary flattening reduction from Lemma~\ref{flatten}, our analysis is carried out in the same half-space domain $\OO_T=(-\infty,0]\times\OO=(-\infty,0]\times\mathbb{H}_-^d\times\R^d$ as in the preceding section. 

\subsection{Gradient estimates away from the grazing boundary}
We focus here on constant-coefficient equations. For concreteness, and without loss of generality, we work with the operator $\LL_0=\partial_t+v\cdot\nabla_x- \Delta_v$. Let us invoke the interior gradient estimates established in \cite[Proposition~3.1 and Corollary~3.3]{IM}, which remain valid on the outgoing part of the boundary. In combination with Corollary~\ref{phase-grad}, this yields the following higher-order gradient estimates for solutions to constant-coefficient equations away from the grazing boundary, for which a sketch of the proof is provided below. 

\begin{lemma}\label{phase-grad2}
Let $\zz\in\OO_T$, $R\in(0,1]$, and $l=(l_t,l_x,l_v)\in\mathbb{N}^{1+2d}$. Suppose that $f$ satisfies $\LL_0f=0$ in $G_R(\zz)$. If $G_R(\zz)\cap(\Sigma_0\cup\Sigma_-)=\emptyset$, then there is some constant $C_l>0$ depending only on $d,\lambda,\Lambda,|\zz|$ and $|l|$ such that
\begin{align*}
|\partial_t^{l_t}D_x^{l_x}D_v^{l_v}f|(\zz)
\le C_l R^{-|l|_{\rm kin}}\|f\|_{L^\infty(G_R(\zz))} . 
\end{align*}
If, in addition,  $f|_{\Sigma_-}=0$, then the same estimate holds for any $\zz\in\OO_T$ such that $G_R(\zz)\cap\Sigma_0=\emptyset$. 
\end{lemma}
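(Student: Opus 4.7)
The plan is to combine the interior hypoelliptic estimates of \cite[Proposition~3.1 and Corollary~3.3]{IM}---which remain valid up to the outgoing boundary $\Sigma_+$ because no inflow data is prescribed there---with the boundary derivative estimate of Corollary~\ref{phase-grad} at points of $\Sigma_-$.

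For the first claim, the hypothesis $G_R(\zz)\cap(\Sigma_0\cup\Sigma_-)=\emptyset$ means that the cylinder lies either entirely in the interior of $\OO_T$ or meets only $\Sigma_+$; in either subcase no boundary condition intervenes, and the cited interior estimates apply directly on the sub-cylinder $G_{R/2}(\zz)$ to produce the claimed bound at $\zz$.

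For the second claim, I may assume $G_R(\zz)\cap\Sigma_-\neq\emptyset$, since otherwise the first claim already applies. Combined with $G_R(\zz)\cap\Sigma_0=\emptyset$, this forces $\vv_d<-R$: indeed $\vv_d\ge R$ would preclude any intersection with $\Sigma_-$ because $v_d$ would be strictly positive throughout $G_R(\zz)$, while $|\vv_d|<R$ combined with $G_R(\zz)$ reaching $\{x_d=0\}$ would produce a point with both $v_d=0$ and $x_d=0$ lying in $\Sigma_0$. The backward transport characteristic through $\zz$ then meets $\partial\OO$ at the unique point $z_\star:=(\tt-\xx_d/\vv_d,\,\xx',\,0,\,\vv)\in\Sigma_-$, with $\|z_\star^{-1}\circ\zz\|=(|\xx_d|/|\vv_d|)^{1/2}\lesssim R$ (using $|\xx_d|\lesssim R^3$ near the boundary and $|\vv_d|>R$). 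Since $|v_{\star,d}|=|\vv_d|>R$ and $R\le1$, Corollary~\ref{phase-grad} applies at $z_\star$ with radius comparable to $R$ and yields the bound $|D^jf(z_\star)|\le C_j R^{-|j|_{\rm kin}}\|f\|_{L^\infty(G_R(\zz))}$ for every multi-index $j$.

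To transfer these estimates from the boundary point $z_\star\in\Sigma_-$ to the interior point $\zz$, I would expand $f$ in its kinetic Taylor polynomial of Subsection~\ref{d-poly} around $z_\star$ up to a large order $N>|l|_{\rm kin}$. Writing $f=T_{z_\star,N}[f]+\mathcal{R}_N$ and invoking the commutation $D^lT_{z_\star,N}[f]=T_{z_\star,N-|l|_{\rm kin}}[D^lf]$, the polynomial part at $\zz$ is controlled by $\sum_{|l|_{\rm kin}\le|j|_{\rm kin}<N}|D^jf(z_\star)|\,\|z_\star^{-1}\circ\zz\|^{|j|_{\rm kin}-|l|_{\rm kin}}\lesssim R^{-|l|_{\rm kin}}\|f\|_{L^\infty(G_R(\zz))}$, thanks to $\|z_\star^{-1}\circ\zz\|\lesssim R$ and the boundary bounds on the coefficients. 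The remainder $D^l\mathcal{R}_N(\zz)$ is absorbed by taking $N$ sufficiently large and invoking Corollary~\ref{phase-grad} at order $N$. The main technical step to verify is that all implicit constants in this transfer depend only on the admitted parameters $d,\lambda,\Lambda,|\zz|,|l|$; this should follow from the uniform lower bound $|v_{\star,d}|>R$ afforded by the avoidance of $\Sigma_0$ and from the explicit velocity dependence recorded in Corollary~\ref{phase-grad}.
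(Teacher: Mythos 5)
Your treatment of the first claim and the geometric reduction $|\vv_d|\ge R$ for the second claim are both fine and match the paper's intent. The genuine gap is in the last step: the remainder $D^l\mathcal{R}_N(\zz)$ is \emph{not} controlled by taking $N$ large and invoking Corollary~\ref{phase-grad} at order $N$. Corollary~\ref{phase-grad} gives bounds on $D^jf$ only \emph{at the boundary point} $z_\star$, whereas any mean-value or Lagrange-type bound on the kinetic Taylor remainder $\mathcal{R}_N = f - T_{z_\star,N}[f]$ unavoidably involves derivatives of kinetic order $\ge N$ at \emph{intermediate} points between $z_\star$ and $\zz$, which is precisely what the lemma is trying to establish. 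Even granting, heuristically, a scaling bound $|D^{(N)}f|\lesssim R^{-N}\|f\|_\infty$ at those intermediate points, the resulting remainder estimate is $\sim R^{-N}\|f\|_\infty\cdot\|z_\star^{-1}\circ\zz\|^{N-|l|_{\rm kin}}\lesssim R^{-|l|_{\rm kin}}\|f\|_\infty$ — it does not become small as $N\to\infty$, it merely stays of the target size, so "absorb by taking $N$ large'' has no content. And in fact the interior gradient estimates one actually has degrade like ${\rm dist}(z,\Sigma_-)^{-|j|_{\rm kin}}$, which blows up near $z_\star$, so even a naive intermediate-point bound fails for $|j|_{\rm kin}\ge 2$.

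What is actually needed is the patching mechanism of Lemma~\ref{patching} (which the paper invokes): combine the boundary Lipschitz estimate of Proposition~\ref{phase-prop}/Corollary~\ref{phase-grad} (which plays the role of the boundary polynomial approximation in property~(i)) with the interior estimate from the first claim (property~(ii), with constants degrading toward $\partial G$), together with translation invariance (property~(iii)). The multiscale argument inside Lemma~\ref{patching} is exactly what compensates for the blow-up of the interior estimate near $\Sigma_-$, by subtracting the boundary Taylor polynomial $p_{z_\star}$ first and exploiting that $f-p_{z_\star}$ vanishes to the right order at $z_\star$. A single forward Taylor expansion from $z_\star$, as you propose, cannot by itself substitute for this step. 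You would need to either reproduce the patching argument explicitly, or set up an induction on $|l|_{\rm kin}$ in which, at each stage, a polynomial kinetic solution is subtracted to improve the rate of vanishing at $z_\star$ before the interior estimate is applied — either of which is substantively more than what your proposal contains.
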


\begin{proof}
In the case where $G_R(\zz)\cap(\Sigma_0\cup\Sigma_-)=\emptyset$ (see Figure~\ref{img-out} for an illustration when $\zz\in\Sigma_+$), the local estimates are independent of the boundary data imposed on $\Sigma_-$. The corresponding gradient estimates for $f$ can be established by an adaptation of Bernstein's method, along the same lines as the approach in \cite[Proposition 3.1]{IM}. Higher order gradient estimates are then obtained via a standard induction argument; see \cite[Corollary~3.3]{IM}. 

If additionally $f|_{\Sigma_-}=0$, then Corollary~\ref{phase-grad} implies desired estimates on $\Sigma_-$ in the case where $G_R(\zz)\cap\Sigma_0=\emptyset$. The boundary and interior estimates can be assembled to obtain the desired derivative bounds; see for instance Lemma~\ref{patching}. 
\end{proof}

\begin{figure}
\includegraphics[width=8.55cm]{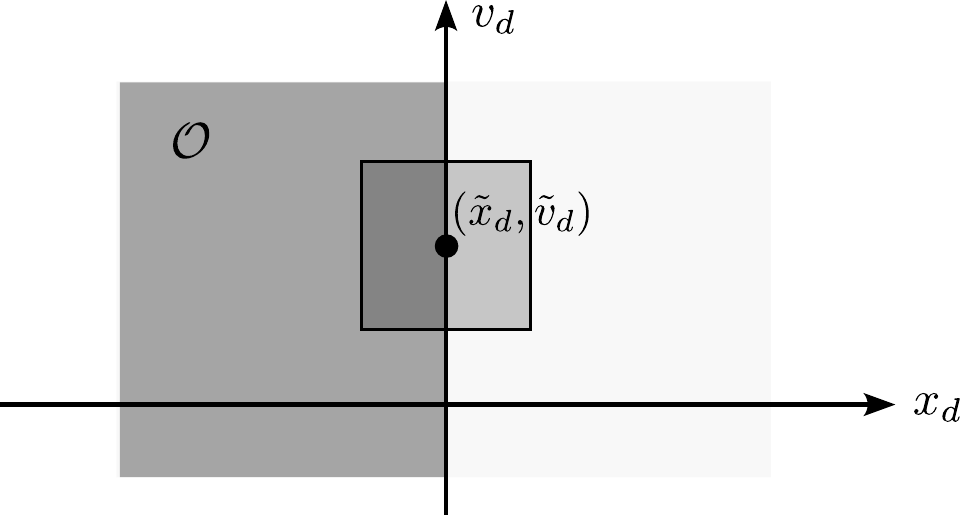}\\\vspace{-0.15cm}
\caption{Solutions follow the same pattern on the outgoing boundary as in the interior.}
\label{img-out}
\end{figure}

\subsection{Schauder estimates away from the grazing boundary}
In what follows, we incorporate a perturbative argument to complete the derivation of the Schauder estimates asserted in part~\ref{regular-Schauder-h} of Theorem~\ref{regular-Schauder}. Passing from gradient estimates to Schauder estimates via perturbation arguments has become standard, relying only on the maximum principle, since works such as \cite{Safonov,Caf1}. For the sake of completeness, we present a proof that, in essence, employs a kinetic adaptation of Safonov's approach. The argument applies equally to the derivation of higher-order interior Schauder estimates. 

Before turning to the proof, we record the following fundamental iteration lemma from \cite[Lemma~1.1]{GG} for subsequent use in our argument. 
\begin{lemma}\label{technicallemma}
Let $n(r)$ be a nonnegative bounded function defined for $0\le r_0\le r\le r_1$. Suppose that, for any $r_0\le r<R\le r_1$,  
\begin{align*}
n(r)\le \epsilon n(s)+M(s-r)^{-\iota},  
\end{align*}
for some constants $\iota,M\ge 0$ and $\epsilon\in[0,1)$. Then, for any $r_0\le r<R\le r_1$, we have
\begin{align*}
n(r)\le CM(R-r)^{-\iota}, 
\end{align*}
where $C>0$ is a constant depending only on $\epsilon$ and $\iota$. 
\end{lemma}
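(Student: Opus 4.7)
The plan is to iterate the hypothesis along a geometric sequence of radii accumulating at $R$. Fix $r_0\le r<R\le r_1$ and choose a ratio $\tau\in(0,1)$, depending only on $\epsilon$ and $\iota$, so that $\mu:=\epsilon\tau^{-\iota}<1$. This is possible since $\epsilon<1$: for $\iota>0$ any $\tau\in(\epsilon^{1/\iota},1)$ works, while for $\iota=0$ one may just take $\tau=1/2$. Define $r_k:=r+(1-\tau^k)(R-r)$, so that $r_0=r$, $r_k\nearrow R$, and the consecutive gaps satisfy $r_{k+1}-r_k=(1-\tau)\tau^k(R-r)$, all staying inside $[r_0,r_1]$.

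Applying the hypothesis to each pair $(r_k,r_{k+1})$ yields
\begin{align*}
n(r_k)\le\epsilon\,n(r_{k+1})+M(1-\tau)^{-\iota}(R-r)^{-\iota}\tau^{-k\iota}.
\end{align*}
Iterating this inequality $K$ times and telescoping the $\epsilon$-factors gives
\begin{align*}
n(r)\le\epsilon^K n(r_K)+M(1-\tau)^{-\iota}(R-r)^{-\iota}\sum_{k=0}^{K-1}\mu^k,
\end{align*}
since at step $k$ the additive term is scaled by $\epsilon^k$, producing a geometric series of ratio $\mu=\epsilon\tau^{-\iota}$.

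Here is where the two standing hypotheses do their work. The boundedness of $n$ on $[r_0,r_1]$ ensures that $\epsilon^K n(r_K)\to0$ as $K\to\infty$, and the condition $\mu<1$ guarantees that the geometric sum converges to $(1-\mu)^{-1}$. Letting $K\to\infty$ therefore delivers
\begin{align*}
n(r)\le\frac{(1-\tau)^{-\iota}}{1-\epsilon\tau^{-\iota}}\,M(R-r)^{-\iota}=CM(R-r)^{-\iota},
\end{align*}
with $C$ depending only on $\epsilon$ and $\iota$ through the fixed choice of $\tau$.

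The argument is essentially bookkeeping; the only conceptual point is that the ratio $\tau$ must be fixed \emph{before} the iteration begins so that the geometric sum converges, and the boundedness of $n$ is used precisely (and only) to absorb the residual $\epsilon^K n(r_K)$ in the limit. I do not anticipate a genuine obstacle beyond selecting $\tau$ so that $\epsilon\tau^{-\iota}<1$, which is always feasible given $\epsilon<1$.
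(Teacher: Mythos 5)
Your proof is correct. The paper does not prove this lemma but cites it directly from Giaquinta--Giusti \cite[Lemma~1.1]{GG}, and your argument---fix a geometric ratio $\tau$ with $\epsilon\tau^{-\iota}<1$, iterate along $r_k=r+(1-\tau^k)(R-r)$, and use boundedness of $n$ to discard the $\epsilon^K n(r_K)$ tail---is precisely the standard proof found in that reference. (One small slip worth fixing: you reuse the symbol $r_0$ both for the left endpoint of the interval in the hypothesis and for the first term of your iterated sequence $r_k$, which coincides with $r$; relabelling the sequence, say $\rho_k$, would avoid the clash. Also note that the paper's statement has a typo, writing $R$ in the quantifier but $s$ in the displayed inequality; you read it in the only sensible way.)
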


Part~\ref{regular-Schauder-h} of Theorem~\ref{regular-Schauder} follows from the estimates furnished by the proposition stated below, together with the boundary flattening procedure described in Subsection~\ref{BFP}. 

\begin{proposition}\label{schauder-bdy}
Let $z_0\in\R^{1+2d}$ satisfy $G_{2r_0}(z_0)\cap\Sigma_0=\emptyset$ for some $r_0\in(0,1]$. Let $f$ be a solution of \eqref{KFP} subject to \eqref{Elliptic} in $G_{2r_0}(z_0)$ with $f|_{\Sigma_-}=0$, and $A,B,S\in\Ck^{k+\alpha}(G_{2r_0}(z_0))$ for some $k\in\mathbb{N}$ and $\alpha\in(0,1)$. There exists some constant $C_k>0$ depending only on $d,\lambda,\Lambda,|z_0|$, $k,\alpha$ and $\Ck^{k+\alpha}(G_{2r_0}(z_0))$-norms of $A,B$ such that 
\begin{align*}
r_0^{k+2+\alpha}\;\![f]_{\Ck^{k+2+\alpha}(G_{r_0}(z_0))}
\le C_k\big(\|f\|_{L^\infty(G_{2r_0}(z_0))} + \|S\|_{\Ck^{k+\alpha}(G_{2r_0}(z_0))}\big).
\end{align*}
\end{proposition}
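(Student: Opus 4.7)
I adapt Safonov's freeze-and-iterate Schauder argument to the kinetic setting, with the constant-coefficient gradient bounds of Lemma~\ref{phase-grad2} as the core analytic input. By Lemma~\ref{flatten} and Remark~\ref{remark-reduction} I reduce to the half-space setting, preserving $G_{2r_0}(z_0)\cap\Sigma_0=\emptyset$. The proof proceeds by induction on $k$: the base case $k=0$ carries all the genuinely new content, while the step $k\mapsto k+1$ is obtained by differentiating \eqref{KFP} along left-invariant operators (cf.~\S\ref{d-poly}) and applying the induction hypothesis to the resulting equations for $D^l f$ with $|l|_{\rm kin}\le k$, whose sources involve $\Ck^{k-|l|_{\rm kin}+\alpha}$-norms of $A,B,S$ and already-controlled lower-order derivatives of $f$.

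For the base case, fix $z_1\in G_{r_0}(z_0)$ and a scale $0<r\le r_0/2$, and let $p_r$ be a polynomial of kinetic degree $<2+\alpha$ nearly attaining
\[
\omega(r,z_1):=\inf\nolimits_{\deg_{\rm kin} p<2+\alpha}\|f-p\|_{L^\infty(Q_r(z_1)\cap G)}.
\]
On $Q_r(z_1)\cap G$ decompose $f-p_r=g+h$, where $g$ solves the frozen-coefficient problem $\LL_{z_1}g=0$ with $\LL_{z_1}:=\partial_t+v\cdot\nabla_x-A(z_1){:}D_v^2-B(z_1)\cdot\nabla_v$ and boundary data $g=f-p_r$ on $\partial_{\rm kin}(Q_r(z_1)\cap G)$. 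A linear change of variables diagonalising $A(z_1)$ puts $\LL_{z_1}$ in a form compatible with Lemma~\ref{phase-grad2} (the disjointness from $\Sigma_0$ is inherited, and the $-p_r$ contribution on the incoming portion of the boundary is shifted into $h$). The maximum principle gives $\|g\|_{L^\infty}\lesssim\omega(r,z_1)$, and Lemma~\ref{phase-grad2} yields $\Ck^\infty$-bounds with derivatives controlled by negative powers of $r$; consequently the kinetic Taylor polynomial $T$ of $g$ at $z_1$ satisfies $\|g-T\|_{L^\infty(Q_{\sigma r}(z_1)\cap G)}\lesssim\sigma^{2+\alpha}\,\omega(r,z_1)$ for every $\sigma\in(0,\tfrac12]$. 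Meanwhile $h$ solves
\[
\LL_{z_1} h=S-\LL p_r+(A-A(z_1)){:}D_v^2(f-p_r)+(B-B(z_1))\cdot\nabla_v(f-p_r)
\]
with zero kinetic boundary data, so a hypoelliptic maximum principle plus the H\"older moduli of the coefficients produce $\|h\|_{L^\infty(Q_r(z_1)\cap G)}\lesssim r^{2+\alpha}\big(\|S\|_{\Ck^\alpha}+([A]_{\Ck^\alpha}+[B]_{\Ck^\alpha})\,[f]_{\Ck^{2+\alpha}(Q_r(z_1)\cap G)}\big)$, once the polynomial contribution $\|\LL p_r\|_\infty\cdot r^2$ is absorbed via a standard interpolation between $\|f\|_{L^\infty}$ and the unknown seminorm.

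Since $p_r+T$ is again a polynomial of kinetic degree $<2+\alpha$, the above combine into the recursive inequality
\[
\omega(\sigma r,z_1)\le C_0\sigma^{2+\alpha}\,\omega(r,z_1)+\sigma^{-N}r^{2+\alpha}\big(\|f\|_{L^\infty(G_{2r_0}(z_0))}+\|S\|_{\Ck^\alpha(G_{2r_0}(z_0))}\big)+\eta(r)\,r^{2+\alpha}\,[f]_{\Ck^{2+\alpha}(G_{r_0}(z_0))},
\]
with $\eta(r)\to 0$ as $r\to 0$ via the H\"older continuity of $A,B$. Picking $\sigma$ so that $C_0\sigma^{2+\alpha}<\tfrac12$ and iterating via Lemma~\ref{technicallemma} upgrades this into $\omega(r,z_1)\le Cr^{2+\alpha}(\|f\|_{L^\infty}+\|S\|_{\Ck^\alpha})$ uniformly in $z_1\in G_{r_0}(z_0)$; the Campanato-type characterization of $\Ck^{2+\alpha}$ then delivers the desired seminorm estimate. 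The principal obstacle is precisely this absorption step: the bound for $\|h\|_{L^\infty}$ involves $[f]_{\Ck^{2+\alpha}}$ itself, i.e.\ the very quantity one seeks to control, and Lemma~\ref{technicallemma} is engineered to let a small multiple of the unknown seminorm be reabsorbed after scale iteration. Secondary technical subtleties, easier to handle, include the velocity-growth of $B$ (absorbed through the $\langle v_0\rangle^2$-dependence of constants from Lemma~\ref{flatten}) and the requirement that the linear normalisation of $A(z_1)$ respect the half-space geometry, both handled quantitatively with constants depending only on $d,\lambda,\Lambda,|z_0|$ and the $\Ck^{k+\alpha}$-norms of $A,B$.
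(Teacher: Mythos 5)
Your core argument, the Safonov-style freeze-and-iterate decomposition combined with Lemma~\ref{phase-grad2}, the maximum principle, interpolation, and absorption via Lemma~\ref{technicallemma}, is precisely what the paper does. The differences are cosmetic: you use a near-best approximating polynomial $p_r$ while the paper uses the kinetic Taylor expansion $T_\zz[f]$, and you also freeze $B$ at $z_1$ while the paper leaves the drift term entirely in the perturbative source $S_f = (\LL_0-\LL)f + S$. Both choices lead to the same absorption estimates after interpolation.

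The genuine gap is your proposed induction on $k$. The paper proves the $\Ck^{k+2+\alpha}$ estimate for every $k$ \emph{directly}, by running the same freezing argument with the kinetic Taylor polynomial of degree $k+2$ and comparing the source $S_f$ to its degree-$k$ expansion $T^k_\zz[S_f]$ (using the commutation identity $D^l T_{z_0,\beta}[f]=T_{z_0,\beta-|l|_{\rm kin}}[D^lf]$ from \S\ref{d-poly}); no differentiation of the equation is needed. Your plan to pass $k\mapsto k+1$ by "differentiating \eqref{KFP} along left-invariant operators" runs into two obstacles you do not address. First, applying $D_v$ does not commute with $\partial_t+v\cdot\nabla_x$: the commutator $[\nabla_v,\partial_t+v\cdot\nabla_x]=\nabla_x$ deposits $D_xf$ into the source for $D_vf$, and $D_x$ is three kinetic orders higher, so this term is not among the "already-controlled lower-order derivatives" — the very quantity you wish to bound. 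Second, applying $D_x^{l_x}$ (which does commute with transport) produces sources containing $D_x^{l_x}A$, $D_x^{l_x}B$, which for $|l_x|\ge1$ consume $3|l_x|$ kinetic orders of coefficient regularity; under the hypothesis $A,B\in\Ck^{k+\alpha}$ this is only consistent with, rather than a gain over, what one already has, so the induction does not bootstrap. The remedy is simply to abandon the induction: your own base-case argument, once written with the degree-$(k+2)$ Taylor polynomial in place of degree $2$, already yields the general statement, which is exactly what the paper does.
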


\begin{proof}
Let $0<r<R\le R_0\le1$, and let $\zz\in G_r(z_0)$ and $\theta\in(0,1]$ such that $G_\theta(\zz)\subset G_R(z_0)$. Note that this inclusion implies $\theta\lesssim R-r$. Denote by $T_\zz[f]$ the polynomial expansion of $f$ around $\zz$, truncated at kinetic degree $k+2$. Without loss of generality, we may assume that $A(\zz)=I_d$. Consider the function $h$ solving
\begin{align*}
\left\{\begin{aligned}
\, &\LL_0h=0 {\quad\rm in\ } G_\theta(\zz),\\
\, &\; h=f-T_\zz[f] {\quad\rm on\ } \partial_{\rm kin} G_\theta(\zz). \\
\end{aligned}\right. 
\end{align*} 
One may refer to \cite[Lemma 2.7]{Zhu} for the solvability of the above boundary value problem. By applying the maximum principle from Lemma~\ref{max-k}, we obtain
\begin{align*}
\|h\|_{L^\infty(G_\theta(\zz))}= \|h\|_{L^\infty(\partial_{\rm kin}G_\theta(\zz))}
\le \|f-T_\zz[f]\|_{L^\infty(G_\theta(\zz))}
\le \theta^{k+2+\alpha}[f]_{\Ck^{k+2+\alpha}(G_R(z_0))}.
\end{align*} 
Combining this with the gradient estimates from Lemma~\ref{phase-grad2}, we know that for any left-invariant differential operator $D^l$ with $l\in\mathbb{N}^{1+2d}$, and any $\varepsilon\in(0,\frac{1}{2}]$, 
\begin{align}\label{bound-l}
\|D^lh\|_{L^\infty(G_{\varepsilon\theta}(\zz))}\lesssim \theta^{-|l|_{\rm kin}} \|h\|_{L^\infty(G_\theta(\zz))} 
\lesssim \theta^{k+2+\alpha-|l|_{\rm kin}}[f]_{\Ck^{k+2+\alpha}(G_R(z_0))}. 
\end{align}
By the polynomial expansion of $h$ around $\zz$ up to kinetic degree $k+2$ and the mean value theorem, for any $z\in G_{\varepsilon\theta}$, there exists some $z'\in G_{\varepsilon\theta}$ such that 
\begin{align*}
h(\zz\circ z)-T_\zz[h](z)=\sum\nolimits_{|l|=k+2} c_l\big[D^lf(\zz\circ z')-D^lf(\zz)\big] z^l, 
\end{align*}
where the constant $c_l>0$ depends only on $l$. This implies that 
\begin{align*}
\|h-T_\zz[h]\|_{L^\infty(G_{\varepsilon\theta}(\zz))} \lesssim 
\sum\nolimits_{|l|_{\rm kin}=k+3,\;\!k+4,\;\!k+5} (\varepsilon\theta)^{|l|_{\rm kin}}\|D^lh\|_{L^\infty(G_{\varepsilon\theta}(\zz))}. 
\end{align*}
Applying \eqref{bound-l} to this estimate yields 
\begin{align}\label{hhg}
\begin{aligned}
\|h-T_\zz[h]\|_{L^\infty(G_{\varepsilon\theta}(\zz))}
&\le C\varepsilon^{k+3}\theta^{k+2+\alpha}[f]_{\Ck^{k+2+\alpha}(G_R(z_0))}\\
&\le \frac{1}{4}(\varepsilon\theta)^{k+2+\alpha}[f]_{\Ck^{k+2+\alpha}(G_R(z_0))},
\end{aligned}
\end{align}
where the second inequality follows from choosing $\varepsilon>0$ sufficiently small so that $C\varepsilon^{1-\alpha}\le\frac{1}{4}$. 

Define $S_f:=(\LL_0-\LL)f+S$, so that $\LL_0f=S_f$. Recalling \S~\ref{d-poly}, it is straightforward to verify that the function $f-T_\zz[f]-h$ satisfies 
\begin{align*}
\left\{\begin{aligned}
\, &\LL_0(f-T_\zz[f]-h)= S_f-T^k_\zz[S_f] {\quad\rm in\ } G_\theta(\zz),\\
\, &\; f-T_\zz[f]-h=0 {\quad\rm on\ } \partial_{\rm kin}G_\theta(\zz), \\
\end{aligned}\right. 
\end{align*}
for $T^k_\zz[S_f]$ denoting the polynomial expansion of $S_f$ around $\zz$ of kinetic degree $k$. The right-hand side of the above equation satisfies 
\begin{align*}
\|S_f-T^k_\zz[S_f]\|_{L^\infty(G_\theta(\zz))}
\le C\theta^{k+\alpha} [S_f]_{\Ck^{k+\alpha}(G_R(z_0))}. 
\end{align*}
It follows from the maximum principle in Lemma~\ref{max-k} that 
\begin{align}\label{ggh}
\theta^{-k-2-\alpha}\|f-T_\zz[f]-h\|_{L^\infty(G_\theta(\zz))}
\le C [S_f]_{\Ck^{k+\alpha}(G_R(z_0))}
\end{align}
By the H\"older condition on $\LL$ and the interpolation of kinetic H\"older spaces (see for instance \cite[Proposition~2.10]{IS-Schauder}), we have
\begin{align*}
[S_f]_{\Ck^{k+\alpha}(G_R(z_0))}
\le (CR_0^\alpha+\varepsilon')[f]_{\Ck^{k+2+\alpha}(G_R(z_0))} + C_{\varepsilon'}\|f\|_{L^\infty(G_R(z_0))} + C[S]_{\Ck^{k+\alpha}(G_R(z_0))}.  
\end{align*} 
Here, the constant $C_{\varepsilon'}>0$ depending on $\varepsilon'$ arises from the interpolation, and we are free to choose $R_0,\varepsilon'\in(0,1]$ sufficiently small so that \eqref{ggh} implies 
\begin{align}\label{gghh}
\begin{aligned}
\theta^{-k-2-\alpha}\|f-T_\zz[f]-h\|_{L^\infty(G_\theta(\zz))}
\le&\ \frac{1}{4}\,[f]_{\Ck^{k+2+\alpha}(G_R(z_0))} \\
&\,+ C\|f\|_{L^\infty(G_R(z_0))} + C[S]_{\Ck^{k+\alpha}(G_R(z_0))}.
\end{aligned}
\end{align}

Next, \eqref{hhg} and \eqref{gghh} can be incorporated into the following triangle inequality  
\begin{align*}
\|f-T_\zz[f+h]\|_{L^\infty(G_{\varepsilon\theta}(\zz))}
\le \|h-T_\zz[h]\|_{L^\infty(G_{\varepsilon\theta}(\zz))} + \|f-T_\zz[f]-h\|_{L^\infty(G_{\varepsilon\theta}(\zz))}, 
\end{align*} 
where $\zz\in G_r(z_0)$ is arbitrary, subject to $G_\theta(\zz)\subset G_R(z_0)$, requiring $\theta\lesssim R-r$. For the complementary case $R-r\lesssim\theta\lesssim R$, the quantity $\theta^{-k-2-\alpha}\|f-T_\zz[f+h]\|_{L^\infty(G_\theta(\zz))}$ admits a trivial bound in terms of the $\Ck^{k+2}(G_\theta(z_0))$-norms of $f$ and $h$, where $h$ can, in turn, be estimated by $f$ in a manner analogous to \eqref{bound-l}. More precisely, invoking the interpolation yields that for $R-r\lesssim\theta\lesssim R$, 
\begin{align*}
\theta^{-k-2-\alpha}\|f-T_\zz[f+h]\|_{L^\infty(G_\theta(\zz))}
&\le C\theta^{-k-2-\alpha}\|f\|_{L^\infty(G_\theta(z_0))} + C\theta^{-\alpha}[f]_{\Ck^{k+2}(G_\theta(z_0))}\\
&\le C(R-r)^{-k-2-\alpha}\|f\|_{L^\infty(G_R(z_0))} + \frac{1}{2}\,[f]_{\Ck^{k+2+\alpha}(G_R(z_0))}. 
\end{align*} 
From the analysis of these two situations, we deduce that for any $0<r<R\le R_0$, 
\begin{align*}
[f]_{\Ck^{k+2+\alpha}(G_r(z_0))}
\le&\;\frac{1}{2}\,[f]_{\Ck^{k+2+\alpha}(G_R(z_0))} + C [S]_{\Ck^{k+\alpha}(G_R(z_0))}\\
&\, + C(R-r)^{-k-2-\alpha} \|f\|_{L^\infty(G_R(z_0))}.
\end{align*}
By virtue of Lemma~\ref{technicallemma}, we derive the claimed estimate. 
\end{proof}

\subsection{Cordes-Nirenberg estimates away from the grazing boundary}
We next state the Cordes-Nirenberg estimate for \eqref{KFP}, which applies to the equation with leading coefficients of merely small oscillation at small scales, less regular than those typically considered in the classical Schauder theory. For completeness, we include the proof below, which proceeds along almost the same lines as the derivation of the Schauder estimate in Proposition~\ref{schauder-bdy}. 

Part~\ref{regular-Schauder-CN} of Theorem~\ref{regular-Schauder} is implied by the following proposition. 

\begin{proposition}\label{CN-bdy}
Let $z_0\in\R^{1+2d}$ satisfy $G_{2r_0}(z_0)\cap\Sigma_0=\emptyset$ for some $r_0\in(0,1]$, and let $\alpha\in(0,1)$. There exist some $\varrho,R_0\in(0,1]$ depending only on $d,\lambda,\Lambda,\alpha$ such that if $f$ is a solution of \eqref{KFP} subject to \eqref{Elliptic} in $G_{2r_0}(z_0)$ with $f|_{\Sigma_-}=0$, with the leading coefficients satisfying
\begin{align}\label{small}
\omega(R_0):=\sup\nolimits_{\{z,\zz:\,z\in G_{R_0}(\zz)\subset G_{2r_0}(z_0)\}}|A(z)-A(\zz)|\le \varrho, 
\end{align}
then for some $C_\omega>0$ depending only on $d,\lambda,\Lambda,|z_0|,\alpha$, we have 
\begin{align*}
r_0^{1+\alpha}\;\![f]_{\Ck^{1+\alpha}(G_{r_0}(z_0))}
\le C_\omega\big( \|f\|_{L^\infty(G_{2r_0}(z_0))} + \|S\|_{L^\infty(G_{2r_0}(z_0))} \big).
\end{align*}
\end{proposition}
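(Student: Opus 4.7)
The plan is to adapt the argument of Proposition~\ref{schauder-bdy}, substituting the smallness of oscillation $\omega(R_0)\le\varrho$ for the missing H\"older regularity of $A$. Since $\deg_{\rm kin}(p_0)<1+\alpha$ permits only polynomials affine in $v$, the relevant Taylor polynomial is $T_\zz[f](z)=f(\zz)+\nabla_v f(\zz)\cdot(v-\vv)$, and it satisfies $\tilde{\LL}T_\zz[f]=0$ for the frozen-coefficient operator $\tilde{\LL}:=\partial_t+v\cdot\nabla_x-A(\zz):D_v^2$. After invoking Lemma~\ref{flatten} and a linear change in $v$ normalizing $A(\zz)=I_d$, I fix $\zz\in G_r(z_0)$ and $\theta\in(0,1]$ with $G_{2\theta}(\zz)\subset G_R(z_0)\subset G_{2r_0}(z_0)$, and let $h$ solve $\tilde{\LL}h=0$ in $G_{2\theta}(\zz)$ with boundary data $h=f-T_\zz[f]$ on $\partial_{\rm kin}G_{2\theta}(\zz)$. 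The maximum principle bounds $\|h\|_{L^\infty(G_{2\theta}(\zz))}$ by $\theta^{1+\alpha}[f]_{\Ck^{1+\alpha}(G_R(z_0))}$; combining with Lemma~\ref{phase-grad2} and expanding $h$ to kinetic degree one yields
\begin{equation*}
\|h-T_\zz[h]\|_{L^\infty(G_{\varepsilon\theta}(\zz))}\le\tfrac{1}{4}(\varepsilon\theta)^{1+\alpha}[f]_{\Ck^{1+\alpha}(G_R(z_0))}
\end{equation*}
once $\varepsilon\in(0,1/2]$ is fixed so that the gain-of-derivative factor $\varepsilon^{1-\alpha}$ overcomes a universal constant.

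The substantive new point, and the main obstacle compared to the Schauder case, arises in analysing $w:=f-T_\zz[f]-h$. A direct computation gives $\tilde{\LL}w=\tilde{\LL}f=(A-A(\zz)):D_v^2f+B\cdot\nabla_v f+S$, in which the term $(A-A(\zz)):D_v^2f$ cannot be directly controlled, since the seminorm $[f]_{\Ck^{1+\alpha}}$ dominates neither $D_v^2f$ nor any of its averages. The key trick is to exploit $D_v^2T_\zz[f]=0$, whence $D_v^2f=D_v^2h+D_v^2w$; absorbing the $(A-A(\zz)):D_v^2w$ contribution back into the leading operator restores the full diffusion $A:D_v^2$ and leaves the equation
\begin{equation*}
\LL w=(A-A(\zz)):D_v^2h+B\cdot\bigl(\nabla_v f(\zz)+\nabla_v h\bigr)+S {\quad\rm in\ }G_{2\theta}(\zz),
\end{equation*}
with $w=0$ on $\partial_{\rm kin}G_{2\theta}(\zz)$. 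Now the bad coefficient multiplies $D_v^2h$, which Lemma~\ref{phase-grad2} dominates by $C\theta^{\alpha-1}[f]_{\Ck^{1+\alpha}(G_R(z_0))}$, so the smallness $\omega(R_0)\le\varrho$ converts this into a genuine contraction on $[f]_{\Ck^{1+\alpha}}$.

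The remaining steps are routine. The kinetic H\"older interpolation of \cite[Proposition~2.10]{IS-Schauder} controls $|\nabla_v f(\zz)|$ by $R_0^\alpha[f]_{\Ck^{1+\alpha}(G_R(z_0))}+R_0^{-1}\|f\|_{L^\infty(G_R(z_0))}$, while $\nabla_v h$ is handled directly by Lemma~\ref{phase-grad2}. Applying the maximum principle for $\LL$ (Lemma~\ref{max-k}) and choosing $\varrho,R_0\in(0,1]$ small enough to absorb the resulting factors $C\omega(R_0)$ and $CR_0$ below $\tfrac{1}{4}$, I obtain
\begin{equation*}
\theta^{-1-\alpha}\|w\|_{L^\infty(G_\theta(\zz))}\le\tfrac{1}{4}[f]_{\Ck^{1+\alpha}(G_R(z_0))}+C\bigl(\|f\|_{L^\infty(G_R(z_0))}+\|S\|_{L^\infty(G_R(z_0))}\bigr).
\end{equation*}
Combining with the bound on $\|h-T_\zz[h]\|$ through the triangle inequality and handling the complementary regime $R-r\lesssim\theta\lesssim R$ by a trivial $\Ck^1$ bound exactly as in the proof of Proposition~\ref{schauder-bdy}, the final step is to invoke the iteration Lemma~\ref{technicallemma} on the resulting contractive inequality
\begin{equation*}
[f]_{\Ck^{1+\alpha}(G_r(z_0))}\le\tfrac{1}{2}[f]_{\Ck^{1+\alpha}(G_R(z_0))}+C(R-r)^{-1-\alpha}\bigl(\|f\|_{L^\infty(G_{2r_0}(z_0))}+\|S\|_{L^\infty(G_{2r_0}(z_0))}\bigr)
\end{equation*}
valid for every $0<r<R\le R_0$, which delivers the claim.
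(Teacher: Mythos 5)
Your proposal is correct and follows essentially the same route as the paper: solve for $h$ with the frozen constant-coefficient operator, bound $h$ and its derivatives via the maximum principle and Lemma~\ref{phase-grad2}, compute $\LL w$ for $w=f-T_\zz[f]-h$ so that the oscillation $\omega(R_0)$ multiplies only $D_v^2 h$ (which is exactly the $(\LL_0-\LL)h$ term the paper writes, and is automatic once one applies the full operator $\LL$ to $w$ rather than the frozen one), then use interpolation to absorb $\nabla_vf(\zz)$, the maximum principle once more, and the iteration Lemma~\ref{technicallemma}. The reorganisation you describe as a "key trick" is a valid way to explain why the bad term sees only $D_v^2h$, but it yields literally the same equation for $w$ as the paper obtains directly.
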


\begin{proof}
Let $0<r<R\le R_0\le1$, and let $\zz\in G_r(z_0)$ and $\theta\in(0,1]$ such that $G_\theta(\zz)\subset G_R(z_0)$. Note that this inclusion implies $\theta\lesssim R-r$. Consider the function $h$ solving
\begin{align*}
\left\{\begin{aligned}
\, &\LL_0h=0 {\quad\rm in\ } G_\theta(\zz),\\
\, &\; h=f-L_\zz[f] {\quad\rm on\ } \partial_{\rm kin}G_\theta(\zz), \\
\end{aligned}\right.
\end{align*} 
where we assume $A(\zz)=I_d$ and denote by $L_\zz[f]$ the polynomial expansion of $f$ around $\zz$ of kinetic degree $1$. By applying the maximum principle from Lemma~\ref{max-k}, we obtain
\begin{align*}
\|h\|_{L^\infty(G_\theta(\zz))}= \|h\|_{L^\infty(\partial_{\rm kin}G_\theta(\zz))}
\le \|f-L_\zz[f]\|_{L^\infty(G_\theta(\zz))}
\le \theta^{1+\alpha}[f]_{\Ck^{1+\alpha}(G_R(z_0))}.
\end{align*}
By Lemma~\ref{phase-grad2}, we know that for any $D^l$ with $l\in\mathbb{N}^{1+2d}$, and any $\varepsilon\in(0,\frac{1}{2}]$, 
\begin{align}\label{h-cn}
\|D^lh\|_{L^\infty(G_{\varepsilon\theta}(\zz))}\lesssim \theta^{-|l|_{\rm kin}} \|h\|_{L^\infty(G_\theta(\zz))} 
\lesssim \theta^{1+\alpha-|l|_{\rm kin}}[f]_{\Ck^{1+\alpha}(G_R(z_0))}. 
\end{align}
The polynomial expansion of $h$ around $\zz$ of kinetic degree $1$, together with the mean value theorem, shows that 
\begin{align*}
\|h-L_\zz[h]\|_{L^\infty(G_{\varepsilon\theta}(\zz))}\le 
\sum\nolimits_{|l|_{\rm kin}=2,\;\!3,\;\!4} (\varepsilon\theta)^{|l|_{\rm kin}}\|D^lh\|_{L^\infty(G_{\varepsilon\theta}(\zz))}. 
\end{align*}
Combining this with \eqref{h-cn}, and taking $\varepsilon>0$ to be small so that $C\varepsilon^{1-\alpha}\le\frac{1}{4}$, we have 
\begin{align}\label{hhg-cn}
\begin{aligned}
\|h-L_\zz[h]\|_{L^\infty(G_{\varepsilon\theta}(\zz))}
\le C\varepsilon^2\theta^{1+\alpha}[f]_{\Ck^{1+\alpha}(G_R(z_0))}
\le \frac{1}{4}(\varepsilon\theta)^{1+\alpha}[f]_{\Ck^{1+\alpha}(G_R(z_0))}.
\end{aligned}
\end{align}
Additionally, the function $f-L_\zz[f]-h$ satisfies 
\begin{align*}
\left\{\begin{aligned}
\, &\LL(f-L_\zz[f]-h)=S_h:= S-B\cdot\nabla_vf(\zz) +(\LL_0-\LL)h {\quad\rm in\ } G_\theta(\zz),\\
\, &\; f-L_\zz[f]-h=0 {\quad\rm on\ } \partial_{\rm kin}G_\theta(\zz).\\
\end{aligned}\right. 
\end{align*}
By \eqref{h-cn}, the right-hand side of the above equation satisfies
\begin{align*}
\|S_h\|_{L^\infty(G_{\varepsilon\theta}(\zz))}
\lesssim \|S\|_{L^\infty(G_R(z_0))} 
+\|f\|_{\Ck^1(G_R(z_0))} + \omega(R_0)\,\theta^{\alpha-1}[f]_{\Ck^{1+\alpha}(G_R(z_0))}. 
\end{align*}
By the maximum principle in Lemma~\ref{max-k} and the interpolation, we obtain 
\begin{align*}
\begin{aligned}
\|f-L_\zz[f]-h\|_{L^\infty(G_\theta(\zz))}
\lesssim&\ \omega(R_0)\;\!\theta^{1+\alpha}[f]_{\Ck^{1+\alpha}(G_R(z_0))}\\ 
&\,+\theta^2\|S\|_{L^\infty(G_R(z_0))} + \theta^2\|f\|_{L^\infty(G_R(z_0))}. 
\end{aligned}
\end{align*}
Applying this and \eqref{hhg-cn} in following the triangle inequality  
\begin{align*}
\|f-L_\zz[f+h]\|_{L^\infty(G_{\varepsilon\theta}(\zz))}\le \|h-L_\zz[h]\|_{L^\infty(G_{\varepsilon\theta}(\zz))} + \|f-L_\zz[f]-h\|_{L^\infty(G_{\varepsilon\theta}(\zz))}, 
\end{align*} 
we deduce that for any $\zz\in G_r(z_0)$ such that $\theta\lesssim R-r$, 
\begin{align*}
\theta^{-1-\alpha}\|f-L_\zz[f+h]\|_{L^\infty(G_{\varepsilon\theta}(\zz))}
\le \Big(\frac{1}{4}+C\omega(R_0)\Big)[f]_{\Ck^{1+\alpha}(G_R(z_0))} + C\|S\|_{L^\infty(G_R(z_0))}. 
\end{align*} 
For $R-r\lesssim\theta\lesssim R$, by using the interpolation, we have 
\begin{align*}
\theta^{-1-\alpha}\|f-L_\zz[f+h]\|_{L^\infty(G_{\varepsilon\theta}(\zz))}
\le C(R-r)^{-1-\alpha}\|f\|_{L^\infty(G_R(z_0))} +\frac{1}{4}\,[f]_{\Ck^{1+\alpha}(G_R(z_0))}.
\end{align*} 
The combination of the above two estimates yields
\begin{align*}
[f]_{\Ck^{1+\alpha}(G_r(z_0))}
\le &\;\Big(\frac{1}{4}+C\omega(R_0)\Big)[f]_{\Ck^{1+\alpha}(G_R(z_0))} + C\|S\|_{L^\infty(G_R(z_0))}\\ 
&\ + C(R-r)^{-1-\alpha}\|f\|_{L^\infty(G_R(z_0))}. 
\end{align*} 
Choose $R_0\in(0,1)$ so that the small-oscillation assumption \eqref{small} is satisfied for some small $\varrho\in(0,1)$ ensuring $C\varrho\le\frac{1}{4}$. Then, we have
\begin{align*}
[f]_{\Ck^{1+\alpha}(G_r(z_0))}
\le \frac{1}{2}\,[f]_{\Ck^{1+\alpha}(G_R(z_0))} + C\|S\|_{L^\infty(G_R(z_0))}
+ C(R-r)^{-1-\alpha} \|f\|_{L^\infty(G_R(z_0))}. 
\end{align*}
By applying Lemma~\ref{technicallemma}, we deduce the desired estimate. 
\end{proof}

\subsection{Optimal H\"older regularity near the grazing boundary}
This subsection is dedicated to the proof of Theorem~\ref{regular-optimal}, the optimal H\"older regularity for solutions of \eqref{KFP} with H\"older continuous leading coefficients, up to the grazing set $\Sigma_0$. According to Remark~\ref{remark-reduction}, we may now localize the analysis near the origin of $\R^{1+2d}$. 

To initiate the argument, we invoke the construction of a stationary solution exhibiting sharp H\"older regularity at $\Sigma_0$ from \cite[Lemma~2.1]{GJW}; see also \cite[Claim 3.7]{HJV} and \cite[Appendix~A]{Zhu} for similar results. 

\begin{lemma}\label{phi-lemma}
There is a nonnegative function $\psi=\psi(x,v)$ with $(x,v)=(x',x_d,v',v_d)\in\mathbb{H}_-^d\times\R^d$ that is independent of the variables $(x',v')\in\R^{d-1}\times\R^{d-1}$ and satisfies 
\begin{align*}
\left\{\begin{aligned}
\ &v\cdot\nabla_x\;\!\psi=0 &{\rm for\ \;}&x_d\le0,\ v_d\in\R,\\
\ &\;\! \psi=0 &{\rm for\ \;}&x_d=0,\ v_d\le0, \\
\ &\;\! \psi\approx (-x_d)^\frac{1}{6} &{\rm in\ \;}&\RR_0\,:=\left\{\;\!0\le (c_*v_d)^3\le -x_d \;\!\right\}\cap(B_1(0)\times B_1(0)), \\
\ &\;\! \psi\approx \sqrt{v_d} &{\rm in\ \;}&\RR_+:=\left\{\;\!0\le-x_d\le (c_*v_d)^3\;\!\right\}\cap(B_1(0)\times B_1(0)),\\
\ &\;\! \psi\approx \sqrt{|v_d|}\, e^{-v_d^3/(9x_d)} &{\rm in\ \;}&\RR_-:=\left\{\;\!0<-x_d\le -(c_*v_d)^3\;\!\right\}\cap(B_1(0)\times B_1(0)),
\end{aligned}\right. 
\end{align*} 
for some the constant $c_*\in(0,1)$. See Figure~\ref{img-graze} for a depiction of the regions $\RR_0$ and $\RR_\pm$. 
\end{lemma}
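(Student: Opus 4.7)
The plan is to construct $\psi$ explicitly as a self-similar solution of the reduced stationary equation $v_d\partial_{x_d}\psi - \partial_{v_d}^2\psi = 0$ on $\{x_d\le 0\}\times\R$, which is what the ambient hypoelliptic equation becomes once the prescribed $(x',v')$-independence of $\psi$ is imposed. Exploiting the kinetic scaling invariance $(x_d,v_d)\mapsto(\lambda^3 x_d,\lambda v_d)$ — under which the operator is homogeneous of degree $-2$ while the target regularity exponent equals $\tfrac12$ — I would introduce the similarity variable $\xi:=v_d^3/(-9x_d)$ and work with the ansatz
\begin{equation*}
\psi(x_d,v_d)=|v_d|^{1/2}\,H(\xi).
\end{equation*}
Substitution reduces the PDE to a second-order linear ODE for $H$ on each half-line $\xi>0$ and $\xi<0$, with a regular singular point at $\xi=0$ that corresponds to the grazing line $v_d=0$.

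In the second step I would solve this ODE explicitly. A Frobenius substitution $H(\xi)=\xi^{1/6}\widetilde H(\xi)$ should recast it as a Kummer-type confluent hypergeometric equation, whose solution family admits Laplace-integral representations of the form
\begin{equation*}
H(\xi)=\int_{0}^{\infty} K(\xi,s)\,\mathrm{d}\mu(s)
\end{equation*}
with Gaussian-in-$s$ kernel $K$. Among the resulting solutions I would single out the unique (up to positive multiple) nonnegative representative satisfying $H(\xi)\to 0$ as $\xi\to 0^-$; via the ansatz, this trace condition is exactly the prescribed boundary condition $\psi=0$ on $\{x_d=0,\,v_d<0\}$. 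Nonnegativity would then follow either from the integral representation or from a maximum principle applied to the reduced ODE on each half-line.

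The third step is to verify the asserted two-sided bounds in each of the three regions. On $\RR_+$, where $\xi\ge 1/(9c_*^3)$, the Kummer asymptotics at $+\infty$ give $H(\xi)$ bounded between positive constants, yielding $\psi\approx\sqrt{v_d}$. On $\RR_0$, where $|\xi|$ is bounded and $\xi\to 0$ is allowed, the Frobenius expansion of $H$ around the regular singular point, combined with the prefactor $|v_d|^{1/2}=(9|x_d|)^{1/6}|\xi|^{1/6}$, reproduces $\psi\approx|x_d|^{1/6}$. On $\RR_-$, where $\xi\le -1/(9c_*^3)$, a saddle-point evaluation of the Laplace representation (equivalently, the large-negative-argument asymptotics of the Kummer function) yields the sharp Gaussian-type factor $e^{-v_d^3/(9x_d)}$ multiplied by $\sqrt{|v_d|}$.

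The main obstacle is the third step, specifically the matching across the singular point $\xi=0$: passing from the algebraic profile at $v_d>0$ to the exponentially small profile at $v_d<0$ through the grazing line requires sharp two-sided control of $H$ and a correct selection of the spectral parameter picking out the distinguished solution. Once the explicit integral representation is in hand, this reduces to a careful but classical exercise in confluent hypergeometric asymptotics, and the required constants $c_*,\,c$ arise from the quantitative estimates on the Kummer expansion at the three relevant scales of $\xi$.
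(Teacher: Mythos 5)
Your proposal follows essentially the same route as the paper: a self-similar reduction to the variable $\tau=-v_d^3/(9x_d)$, a confluent hypergeometric (Kummer-type) ODE for the profile, and the known asymptotics of Tricomi's function $U$ in the three regimes $\tau\approx0$, $\tau\to+\infty$, $\tau\to-\infty$. The paper writes $\psi=(-x_d)^{1/6}\Upsilon(\tau)$, identifies $\Upsilon$ explicitly with $U(-1/6,2/3,\tau)$ for $\tau\ge0$ and $\tfrac{e^\tau}{6}U(5/6,2/3,-\tau)$ for $\tau\le0$, and verifies the $C^1$ match across $\tau=0$ via the $\Gamma$-identity $\tfrac23\Gamma(-2/3)=\Gamma(1/3)$; the asymptotics are then read off from standard references rather than re-derived by a saddle-point calculation. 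Your ansatz $\psi=|v_d|^{1/2}H(\xi)$ is the same reduction in disguise, since $(-x_d)^{1/6}=9^{-1/6}|v_d|^{1/2}|\xi|^{-1/6}$, and your instinct that the matching across the regular singular point $\xi=0$ is the delicate step is exactly right. Two details in your setup need correcting, however. First, the Frobenius factor should be $H(\xi)=\xi^{-1/6}\widetilde H(\xi)$, not $\xi^{1/6}\widetilde H(\xi)$: the indicial roots at $\xi=0$ are $\pm1/6$, and the desired branch is the singular one, since $\psi\approx(-x_d)^{1/6}$ on the grazing line forces $H\sim|\xi|^{-1/6}$ there. Second, and more importantly, the inflow boundary $\{x_d=0,\,v_d<0\}$ corresponds to $\xi\to-\infty$ (not $\xi\to0^-$, which is the approach to the grazing line from $v_d<0$); the correct selection criterion is $H(\xi)\to0$ as $\xi\to-\infty$ at the recessive exponential rate $e^\xi$, paired with algebraic boundedness as $\xi\to+\infty$. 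If taken literally, the condition $H(\xi)\to0$ as $\xi\to0^-$ would single out the vanishing Frobenius branch and thereby the wrong solution; with these two sign fixes your argument lines up with the paper's.
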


\begin{figure}
\includegraphics[width=8.45cm]{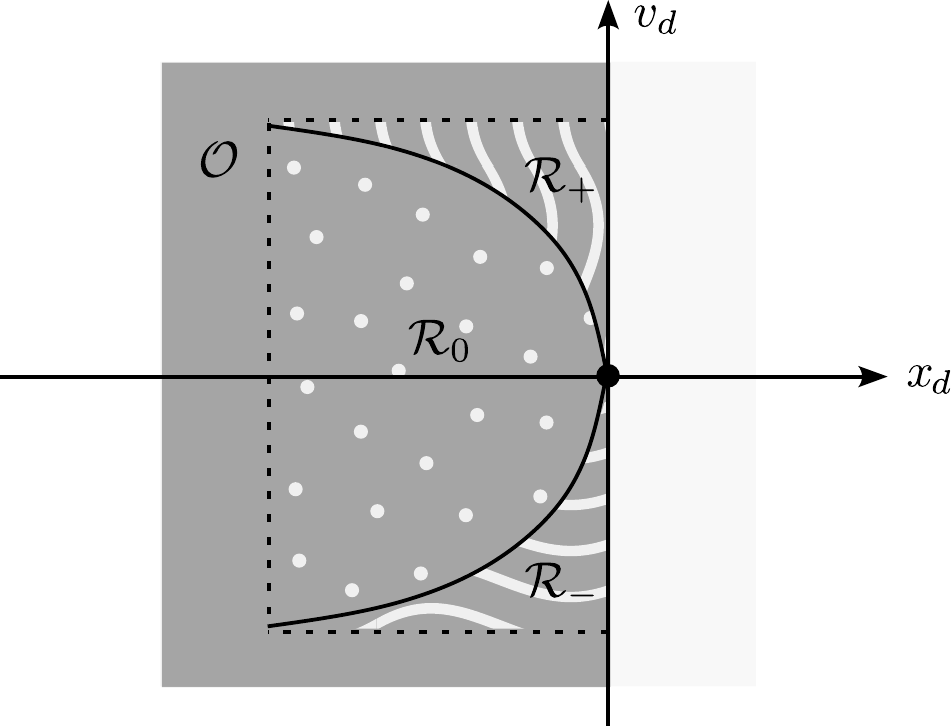}\\\vspace{-0.15cm}
\caption{The regions $\RR_0$ and $\RR_\pm$ near the grazing set where the solution exhibits distinct asymptotic behaviours.}
\label{img-graze}
\end{figure}

\begin{proof}
It suffices to consider the case $d=1$, in which we write $x=x_d\in\R_-$ and $v=v_d\in\R$. We construct a special solution $\psi$ of $v\partial_x\psi=0$ on $\R_-\times\R$ based on the ansatz
\begin{align*}
\psi(x,v)=(-x)^\frac{1}{6}\,\Upsilon(\tau) {\quad\rm with\ \;}\tau=-\frac{v^3}{9x},
\end{align*}
under which the profile function $\Upsilon:\R\to\R_+$ must satisfy
\begin{align}\label{ode-Tricomi}
\tau\Upsilon''(\tau)+\bigg(\frac{2}{3}-\tau\bigg)\Upsilon'(\tau)+\frac{1}{6}\Upsilon(\tau)=0. 
\end{align}
According to \cite[Sections~7.10]{Olver}, we can choose 
\begin{align*}
\Upsilon(\tau)=
\left\{\begin{aligned}
&\ U\!\left(-\frac{1}{6},\frac{2}{3},\tau\right) &{\rm for\ \;}&\tau\ge0,\\
&\ \frac{e^\tau}{6}\,U\!\left(\,\frac{5}{6},\frac{2}{3},-\tau\right)  &{\rm for\ \;}& \tau\le0, 
\end{aligned}\right. 
\end{align*}
where $U(\cdot,\cdot,\cdot)$ denotes Tricomi's (confluent hypergeometric) function. Its definition and asymptotic behaviours can also be found in \cite[Sections~7.10]{Olver}. While $\Upsilon(\tau)$ satisfies \eqref{ode-Tricomi} for $\tau\neq0$ by the definition of $U$, the validity of \eqref{ode-Tricomi} across $\tau=0$ is confirmed by the explicit expressions of $U(\cdot,\cdot,0)$ for the value and derivative of $\Upsilon$ at that point: 
\begin{align*}
\Upsilon(0)&=\lim_{\tau\,\to\,0^\pm}\Upsilon(\tau)=\frac{\varGamma(1/3)}{\varGamma(1/6)},\\
\Upsilon'(0)&=\lim_{\tau\,\to\,0^\pm}\Upsilon(\tau)=\frac{\varGamma(-2/3)}{6\,\varGamma(1/6)},
\end{align*}
where $\varGamma(\cdot)$ demotes the Gamma function. The identity $\frac{2}{3}\;\!\varGamma(-2/3)=\varGamma(1/3)$ thus yields \eqref{ode-Tricomi} at $\tau=0$. Furthermore, $\Upsilon>0$ on $\R$, and 
\begin{align*}
\Upsilon(\tau)=
\left\{\begin{aligned}
&\ \tau^\frac{1}{6}\;\!\big(1+O\big(\tau^{-1}\big)\big) &{\rm as\ \;}&\tau\to\infty,\\
&\ \frac{e^\tau}{6}\,|\tau|^\frac{1}{6}\;\!\big(1+O\big(\tau^{-1}\big)\big) &{\rm as\ \;}& \tau\to-\infty.
\end{aligned}\right. 
\end{align*}
This in turn implies that for any $(x,v)\in\R_-\times\R$ and any $a\neq0$,  
\begin{align*}
\psi(x,v)=
\left\{\begin{aligned}
&\ |x|^\frac{1}{6}\;\!\Upsilon\big(-(9a)^{-1}\big) &{\rm as\ \;}&xv^{-3}\to a,\\
&\ \,v^\frac{1}{2}\;\!\big(9^{-\frac{1}{6}}+O\big(xv^{-3}\big)\big) &{\rm as\ \;}&xv^{-3}\to0^-,\\
&\ \,\frac{1}{6}\,|v|^\frac{1}{2}\,e^{-\frac{v^3}{9x}}\;\!\big(9^{-\frac{1}{6}}+O\big(xv^{-3}\big)\big) &{\rm as\ \;}& xv^{-3}\to0^+.
\end{aligned}\right. 
\end{align*}
By choosing $c_*\in(0,1)$ sufficiently small, we ensure that for any $|xv^{-3}|\le c_*^3$, 
\begin{align*}
1/2\le 9^{-\frac{1}{6}}+O\big(xv^{-3}\big)\le 1, 
\end{align*} 
from which the asserted asymptotics of $\psi$ follows. 
\end{proof}

Building upon the precise characterization of the boundary behaviours of the special stationary solution $\psi$ from the preceding lemma, we are able to construct a suitable barrier function for the constant-coefficient operators, exemplified by $\LL_0=\partial_t+v\cdot\nabla_x-\Delta_v$, subject to the zero influx boundary condition. 

\begin{lemma}\label{phi-barrier}
Let the function $\psi(x,v)$ and the regions $\RR_0,\RR_\pm$ with the constant $c_*\in(0,1)$ be given in Lemma~\ref{phi-lemma}. There are some constants $r_0,c_0>0$ such that for the function 
\begin{align*}
\Psi(t,x,v):=\psi(x,v)-2v_d-|v_d|^2-t, 
\end{align*}
we have 
\begin{align*}
\left\{\begin{aligned}
\ &\LL_0\Psi=1 &\ &{\rm in\ \;\,} \{x_d\le 0\},\\
\ &\;\!\Psi\lesssim \|z\|^\frac{1}{2} &\ &\;\!{\rm in\ \;}\ \,\!G_{r_0},\\
\ &\;\!\Psi\ge0 &\ &{\rm on\ \;} \{x_d=0,\ v_d\le0\}, \\
\ &\;\!\Psi\ge c_0 &\ &{\rm on\ \;} \{t=-r_0^2,{\ \rm or\ }x_d=-r_0^3,{\ \rm or\ }|v_d|=r_0\}\cap\overline{G}_{r_0}.  
\end{aligned}\right. 
\end{align*} 
\end{lemma}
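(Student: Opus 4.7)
My plan is to verify each of the four bulleted properties of $\Psi$ in turn, combining a direct computation based on the linearity of $\LL_0$ with the pointwise asymptotics of $\psi$ collected in Lemma~\ref{phi-lemma}. The cornerstone is that $\psi$ is in fact a stationary solution of the constant-coefficient kinetic Fokker--Planck operator, i.e.\ $v_d\partial_{x_d}\psi=\partial_{v_d}^2\psi$; this is exactly what the ansatz $\psi=(-x_d)^{1/6}\Upsilon(\tau)$ with $\tau=-v_d^3/(9x_d)$ reduces to via the Tricomi ODE~\eqref{ode-Tricomi}. Then (1) follows from $\LL_0\psi=0$ together with $\LL_0(-2v_d)=0$, $\LL_0(-v_d^2)=2$ and $\LL_0(-t)=-1$, which sum to $1$ throughout $\{x_d\le 0\}$.

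For (2), I would split $G_{r_0}$ into the regions $\RR_0,\RR_\pm$ of Lemma~\ref{phi-lemma} and their complement, using $|x_d|\lesssim\|z\|^3$ and $|v_d|\lesssim\|z\|$. In $\RR_0$, $\psi\lesssim|x_d|^{1/6}\lesssim\|z\|^{1/2}$; in $\RR_+$, $\psi\lesssim v_d^{1/2}\lesssim\|z\|^{1/2}$; in $\RR_-$, the exponent $-v_d^3/(9x_d)$ is negative, so the exponential is $\le 1$ and $\psi\lesssim|v_d|^{1/2}\lesssim\|z\|^{1/2}$; on the remaining region $\{v_d<0,\ -x_d\ge-(c_*v_d)^3\}$ the variable $\tau$ stays in a bounded negative interval and $\Upsilon$ is bounded, so once more $\psi\lesssim|x_d|^{1/6}\lesssim\|z\|^{1/2}$. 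The polynomial corrections satisfy $|2v_d|+v_d^2+|t|\lesssim\|z\|+\|z\|^2\lesssim\|z\|^{1/2}$ on $G_{r_0}$ with $r_0\le 1$. For (3), on $\{x_d=0,\,v_d\le 0\}$ Lemma~\ref{phi-lemma} gives $\psi=0$, while $-2v_d-v_d^2=|v_d|(2-|v_d|)\ge 0$ (since $|v_d|\le r_0\le 2$) and $-t\ge 0$; summing, $\Psi\ge 0$.

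The remaining property (4) is the main content. I would choose $c_0\approx r_0^2$ for $r_0$ small enough (say $r_0\le c_*/4$, $r_0\le 1$) and treat the three faces in turn. On $\{|v_d|=r_0\}$: if $v_d=-r_0$ the term $-2v_d=2r_0$ dominates and $\Psi\ge 2r_0-r_0^2\gtrsim r_0$; if $v_d=r_0$, Lemma~\ref{phi-lemma} yields $\psi\gtrsim r_0^{1/2}$ uniformly (via the $\RR_+$ asymptotic, or via the $\RR_0$ lower bound $|x_d|^{1/6}\ge (c_*r_0)^{1/2}$), so $\Psi\gtrsim r_0^{1/2}-2r_0\gtrsim r_0^{1/2}$. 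On $\{x_d=-r_0^3\}$: for $v_d\ge 0$ the point lies in $\RR_0$ because $(c_*r_0)^3\le r_0^3$, so $\psi\gtrsim r_0^{1/2}$; for $v_d<0$ the point lies outside $\RR_-$, $\tau$ is bounded, and by continuity and strict positivity of $\Upsilon$ one still has $\psi\gtrsim|x_d|^{1/6}=r_0^{1/2}$. On $\{t=-r_0^2\}$: for $v_d\le 0$, $-2v_d-v_d^2\ge 0$ and $-t=r_0^2$, giving $\Psi\ge r_0^2$; for $0<v_d\le r_0$, the $\RR_0\cup\RR_+$ asymptotic gives $\psi\gtrsim\sqrt{v_d}$, and for $r_0$ small half of this dominates $2v_d$, so $\Psi\ge r_0^2-v_d^2+(\textrm{nonneg})$ and the minimum over this face is of order $r_0^2$, attained near $(x_d,v_d)=(0,0)$.

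The main obstacle is precisely this last face near the grazing corner $(x_d,v_d)=(0,0)$, where $\psi$ vanishes and one must rely purely on the time correction $-t=r_0^2$ to produce a positive lower bound while absorbing the detrimental terms $-v_d^2$ and, for $v_d>0$, $-2v_d$. This is what dictates the size $c_0\approx r_0^2$. The coexistence of the sharp $\|z\|^{1/2}$ upper bound in (2) with the $r_0^2$ lower bound in (4) is precisely what makes $\Psi$ a viable barrier encoding the optimal kinetic H\"older exponent $1/2$ in Theorem~\ref{regular-optimal}.
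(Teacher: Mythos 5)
Your proposal is correct and follows essentially the same route as the paper's own proof: you use $\LL_0\psi=0$ (correctly reading the stationary equation of Lemma~\ref{phi-lemma} as $v\cdot\nabla_x\psi=\Delta_v\psi$, consistent with the Tricomi ODE~\eqref{ode-Tricomi} derived there), verify the $\|z\|^{1/2}$ upper bound region by region in $\RR_0,\RR_\pm$ and the remaining sector $\{v_d<0,\,-x_d\ge c_*^3|v_d|^3\}$, and then read off the boundary lower bounds from the asymptotics of $\psi$ on the three faces of $\partial_{\rm kin}G_{r_0}$. One small polish: on the face $\{t=-r_0^2\}$, rather than asserting informally that the minimum is of order $r_0^2$ ``attained near $(0,0)$'', it is cleaner (and is what the paper does) to first establish $\psi-2v_d-|v_d|^2\ge 0$ throughout $\overline{G}_{r_0}$ — your inequalities $\psi\gtrsim\sqrt{v_d}$ for $v_d>0$ and $-2v_d-v_d^2\ge 0$ for $v_d\le 0$, combined with $r_0$ small, already give this — so that $\Psi\ge -t=r_0^2$ follows immediately.
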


\begin{proof}
Given the function $\psi$ constructed in Lemma~\ref{phi-lemma}, it is straightforward to see that   
\begin{align*}
\left\{\begin{aligned}
\, &\LL_0\Psi=1 &\ &{\rm in\ \;}\, \{x_d\le 0\},\\
\, &\,\Psi=0 &\ &{\rm on\ \;} \{x_d=v_d=0\}. 
\end{aligned}\right. 
\end{align*} 
The upper bound of $\Psi$ follows directly from its definition together with the result of Lemma~\ref{phi-lemma}. It thus suffices to establish the lower bound of $\Psi$ on the parabolic boundary of $G_{r_0}$ for some $r_0>0$. By Lemma~\ref{phi-lemma}, there is some constant $c\in(0,1)$ such that 
\begin{align*}
\psi(x,v)\ge 2c\;\!|x_d|^\frac{1}{6}\ge 2c\;\!|c_*v_d|^\frac{1}{2} & {\quad\rm in\ \;}\RR_0, \\
\psi(x,v)\ge 2c\;\!|v_d|^\frac{1}{2}& {\quad\rm in\ \;}\RR_+,  
\end{align*} 
where $\RR_0$ and $\RR_+$ are defined in Lemma~\ref{phi-lemma} in terms of $c_*\in(0,1)$. 
Choosing $r_0\in(0,1)$ to be sufficiently small, we know that for any $r\in(0,r_0]$,
\begin{align*}
2c\sqrt{c_*r}-2r-r^2\ge c\sqrt{c_*r} {\quad\rm and\quad}
2r-r^2\ge  r, 
\end{align*} 
which in particular implies that 
\begin{align*}
\psi(x,v)-2v_d-|v_d|^2\ge 0 {\quad\rm in\ \;}\overline{G}_{r_0}. 
\end{align*} 
Recalling the definition of $\Psi$, we deduce that 
\begin{align*}
\left\{\begin{aligned}
\,&\;\!\Psi\ge r_0^2 &\ &{\rm on\ \;} \{t=-r_0^2\}\cap\overline{G}_{r_0}, \\
\,&\;\!\Psi\ge c\sqrt{c_*r_0} &\ &{\rm on\ \;} \{x_d=-r_0^3,{\ \rm or\ }v_d=r_0\}\cap\overline{G}_{r_0},  \\
\,&\;\!\Psi\ge r_0 &\ &{\rm on\ \;} \{v_d=-r_0\}\cap\overline{G}_{r_0}. 
\end{aligned}\right. 
\end{align*} 
This establishes the claimed lower bound. 
\end{proof}

This barrier construction identifies and saturates the optimal H\"older exponent. On the one hand, it incorporates an explicit solution whose behaviour exhibits precisely this exponent, ruling out any possible improvement. On the other hand, the barrier argument yields the sharp H\"older regularity to \eqref{KFP}, matching the exponent realized by the barrier. 

Let us begin by applying this barrier construction to \eqref{KFP} with constant coefficients. 

\begin{lemma}\label{phi-holder}
Let $f$ be a solution to $\LL_0f=S$ in $G_r$ with $f|_{\Sigma_-}=0$. Then, we have 
\begin{align*}
r^{1/2}\;\![f]_{\Ck^{1/2}(G_{r/2})} \lesssim \|f\|_{L^\infty(G_r)}+\|S\|_{L^\infty(G_r)}. 
\end{align*} 
\end{lemma}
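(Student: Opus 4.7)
The plan is to combine the grazing barrier $\Psi$ from Lemma~\ref{phi-barrier} with the interior-type gradient estimates of Lemma~\ref{phase-grad2}, patched together via the Galilean group structure. A kinetic rescaling first reduces the claim to the case $r=r_0$: setting $\tilde f(z):=f(S_{r/r_0}z)$ gives $\LL_0\tilde f=(r/r_0)^2\tilde S$ with $\tilde f|_{\Sigma_-}=0$, the semi-norm satisfies $[\tilde f]_{\Ck^{1/2}(G_{r_0/2})}=(r/r_0)^{1/2}[f]_{\Ck^{1/2}(G_{r/2})}$, and $(r/r_0)^2\le 1$ can be absorbed, so it suffices to prove $[f]_{\Ck^{1/2}(G_{r_0/2})}\lesssim\|f\|_{L^\infty(G_{r_0})}+\|S\|_{L^\infty(G_{r_0})}$.

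The key step is a pointwise H\"older bound at every grazing point. Fix $z^*\in\Sigma_0\cap\overline{G}_{r_0}$. Since $x_d^*=v_d^*=0$, the Galilean translation $z\mapsto z^*\circ z$ preserves $\OO$ and the subsets $\Sigma_\pm,\Sigma_0$, so $\tilde\Psi(z):=\Psi(z^{*-1}\circ z)$ inherits all the properties of $\Psi$ relative to $z^*$. Choosing a constant $M\approx\|f\|_{L^\infty(G_{r_0})}+\|S\|_{L^\infty(G_{r_0})}$ with $M\ge\|S\|_{L^\infty}$ and $Mc_0\ge\|f\|_{L^\infty}$, the two functions $M\tilde\Psi\pm f$ satisfy $\LL_0(M\tilde\Psi\pm f)=M\pm S\ge 0$ in $G_{r_0}(z^*)$, are nonnegative on $\Sigma_-\cap\partial_{\rm kin}G_{r_0}(z^*)$ since $\tilde\Psi\ge 0$ and $f=0$ there, and are nonnegative on the remaining parabolic boundary since $M\tilde\Psi\ge Mc_0\ge|f|$. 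The maximum principle (Lemma~\ref{max-su}) then yields
\begin{align*}
|f(z)|\le M\tilde\Psi(z)\lesssim M\,\|z^{*-1}\circ z\|^{1/2}\quad\text{for all }z\in G_{r_0}(z^*);
\end{align*}
in particular $f$ extends continuously by $0$ on $\Sigma_0$.

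To upgrade this to a seminorm bound at an arbitrary base point $z_0\in G_{r_0/2}$, I let $z^*\in\Sigma_0\cap\overline{G}_{r_0}$ be a kinetic nearest point and write $d:=\|z^{*-1}\circ z_0\|$. For $z\in G_{r_0/2}$ with $\|z_0^{-1}\circ z\|\ge d/2$, the triangle inequality for the kinetic gauge together with the grazing estimate at $z^*$ applied to both $z$ and $z_0$ yields
\begin{align*}
|f(z)-f(z_0)|\lesssim M\big(d^{1/2}+(d+\|z_0^{-1}\circ z\|)^{1/2}\big)\lesssim M\|z_0^{-1}\circ z\|^{1/2}.
\end{align*}
For $\|z_0^{-1}\circ z\|\le d/2$, the cylinder $G_{d/2}(z_0)$ is disjoint from $\Sigma_0$, so Lemma~\ref{phase-grad2} applies (extended to the inhomogeneous equation by subtracting a simple quadratic $S$-barrier) and gives $|Df(z_0)|\lesssim d^{-1}\|f\|_{L^\infty(G_{d/2}(z_0))}+d\|S\|_{L^\infty}$; substituting the sharpened bound $\|f\|_{L^\infty(G_{d/2}(z_0))}\lesssim Md^{1/2}$ from the previous paragraph produces $|Df(z_0)|\lesssim Md^{-1/2}$, whence
\begin{align*}
|f(z)-f(z_0)|\lesssim Md^{-1/2}\|z_0^{-1}\circ z\|\lesssim M\|z_0^{-1}\circ z\|^{1/2},
\end{align*}
using $\|z_0^{-1}\circ z\|/d\le 1/2$. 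Combining both cases and undoing the rescaling completes the argument.

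The main obstacle is the tight interplay between the $d^{1/2}$ decay near $\Sigma_0$ supplied by $\Psi$ and the $d^{-1}$ blow-up of the gradient as one approaches the grazing set: without first feeding the sharpened grazing bound back into the interior gradient estimate, one would fall exactly one factor of $d^{1/2}$ short of closing the $\Ck^{1/2}$ argument. This is why the specific design of $\Psi$ in Lemma~\ref{phi-barrier}, whose $\|z\|^{1/2}$ ceiling matches the sharp H\"older exponent exhibited by the counterexample of \cite{GJW}, is essential.
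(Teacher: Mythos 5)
Your proof is correct and takes essentially the same approach as the paper: rescale to $r=r_0$, use the barrier $\Psi$ from Lemma~\ref{phi-barrier} together with the maximum principle to obtain the pointwise bound $|f(z)|\lesssim M\|z^{*-1}\circ z\|^{1/2}$ at grazing points, then combine with the away-from-grazing gradient estimates of Lemma~\ref{phase-grad2}. The one place where you go further than the paper's own write-up is that you carry out the patching step explicitly — translating the barrier to an arbitrary $z^*\in\Sigma_0$ via the Galilean group (correctly noting that $z\mapsto z^*\circ z$ preserves $\OO$ and $\Sigma_0,\Sigma_\pm$ when $x_d^*=v_d^*=0$), splitting on the kinetic distance $d$ to the grazing set, and feeding the sharpened $L^\infty$ bound $\|f\|_{L^\infty(G_{d/2}(z_0))}\lesssim Md^{1/2}$ back into the gradient estimate to recover the missing factor of $d^{1/2}$ — whereas the paper establishes only the pointwise bound at the origin and leaves the upgrade to the full $\Ck^{1/2}(G_{r/2})$ seminorm to the standard patching mechanism recorded in Lemma~\ref{patching}. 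Your version makes the crucial bookkeeping (the $d^{-1/2}$ versus $d^{1/2}$ tension) fully visible.
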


\begin{proof}
Let the constants $r_0,c_0\in(0,1)$ and the function $\Psi$ be given by Lemma~\ref{phi-barrier}. By scaling, it suffices to prove the estimate with $r=r_0$. Let $M_*:=c_0^{-1}\|f\|_{L^\infty(G_{r_0})}+\|S\|_{L^\infty(G_{r_0})}$. In the light of the properties of $\Psi$ established in Lemma~\ref{phi-barrier}, we have
\begin{align*}
\left\{\begin{aligned}
\, &\LL_0(f-M_*\Psi)\le0 &\ &{\rm in\ \;\,} \{x_d\le 0\}\cap\overline{G}_{r_0},\\
\, &\;\!f-M_*\Psi\le0 &\ &{\rm on\ \;} \{x_d=0,\ v_d\le0\}\cap\overline{G}_{r_0}, \\
\, &\;\!f-M_*\Psi\le0 &\ &{\rm on\ \;} \{t=-r_0^2,{\ \rm or\ }x_d=-r_0^3,{\ \rm or\ }|v_d|=r_0\}\cap\overline{G}_{r_0}. 
\end{aligned}\right. 
\end{align*} 
Applying the maximum principle, Lemma~\ref{max-k}, yields that for any $z\in G_{r_0}$, 
\begin{align*}
f(z)&\le \big(c_0^{-1}\|f\|_{L^\infty(G_{r_0})}+\|S\|_{L^\infty(G_{r_0})}\big)\,\Psi(z)\\
&\lesssim \big(\|f\|_{L^\infty(G_{r_0})}+\|S\|_{L^\infty(G_{r_0})}\big)\,\|z\|^\frac{1}{2}, 
\end{align*}
where the last inequality above is also given by Lemma~\ref{phi-barrier}. 
\end{proof}

We are in a position to prove the optimal H\"older regularity for solutions of \eqref{KFP} with various coefficients, based on a perturbative argument in the same spirit as the Schauder estimates. Theorem~\ref{regular-optimal} follows as a direct consequence of the following proposition, together with the boundary flattening procedure from Subsection~\ref{BFP}. 

\begin{proposition}\label{sharp-holder}
Let $z_0\in\Sigma_0$ and $r_0,\alpha\in(0,1)$. Let $f$ be a solution of \eqref{KFP} subject to \eqref{Elliptic} in $G_{2r_0}(z_0)$ with $f|_{\Sigma_-}=0$ and $A\in\Ck^\alpha(G_{2r_0}(z_0))$. There exists some constant $C>0$ depending only on $d,\lambda,\Lambda,|z_0|,\alpha$ and $[A]_{\Ck^\alpha(G_{2r}(z_0))}$ such that
\begin{align*}
r_0^{1/2}\;\![f]_{\Ck^{1/2}(G_{r_0})} \le C\big( \|f\|_{L^\infty(G_r)} + \|S\|_{L^\infty(G_{2r_0})} \big). 
\end{align*} 
\end{proposition}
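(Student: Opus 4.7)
The plan is to establish the claimed estimate via a Campanato-type perturbation scheme, paralleling the proofs of Propositions~\ref{schauder-bdy} and~\ref{CN-bdy}, but using the sharp constant-coefficient boundary estimate of Lemma~\ref{phi-holder} as the a priori regularity input in place of the higher-order interior gradient estimates provided by Lemma~\ref{phase-grad2}. The first step is to apply Lemma~\ref{flatten} together with Remark~\ref{remark-reduction}, reducing to the half-space setting with $z_0=0\in\Sigma_0$ so that the flattened grazing structure matches the geometry underlying Lemma~\ref{phi-holder}.

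For a grazing center $\zz\in\Sigma_0\cap G_{r_0/2}(z_0)$ and any scale $\theta\in(0,1]$ with $G_\theta(\zz)\subset G_{R_0}(z_0)$, a linear change of coordinates in $v$ normalizes $A(\zz)=I_d$, and one compares $f$ with the auxiliary solution $h$ of
\begin{align*}
\LL_0 h=0 {\ \,\rm in\ } G_\theta(\zz), \quad h=f {\ \,\rm on\ } \partial_{\rm kin}G_\theta(\zz),
\end{align*}
which inherits the inflow condition $h|_{\Sigma_-\cap G_\theta(\zz)}=0$. The Galilean left-translation carrying $\zz$ to the origin preserves the half-space structure precisely because $\zz\in\Sigma_0$, enabling Lemma~\ref{phi-holder} together with the maximum principle (Lemma~\ref{max-k}) to yield
\begin{align*}
\theta^{1/2}[h]_{\Ck^{1/2}(G_{\theta/2}(\zz))}\lesssim\|h\|_{L^\infty(G_\theta(\zz))}\le\|f\|_{L^\infty(G_\theta(\zz))}.
\end{align*}
The error $e=f-h$ satisfies $\LL e=S+(A-A(\zz))\!:\!D_v^2h+B\cdot\nabla_vh$ with $e|_{\partial_{\rm kin}G_\theta(\zz)}=0$, and Lemma~\ref{max-k} bounds $\|e\|_{L^\infty(G_\theta(\zz))}$ by $\theta^2$ times the right-hand side of this error equation, where the factor $|A-A(\zz)|\lesssim[A]_{\Ck^\alpha}\;\!\theta^\alpha$ supplies the decisive H\"older smallness. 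Writing the triangle inequality with $c_\zz=h(\zz)$ and invoking Lemma~\ref{technicallemma}, as in Propositions~\ref{schauder-bdy} and~\ref{CN-bdy}, closes the iteration and yields the $\Ck^{1/2}$ semi-norm bound at $\zz$. For centers $\zz$ with positive kinetic distance to $\Sigma_0$, Proposition~\ref{CN-bdy} applied on the interior cylinder around $\zz$ furnishes local $\Ck^{1+\alpha}$ regularity, which is stronger than $\Ck^{1/2}$; patching the two regimes by a standard covering argument produces the global bound on $G_{r_0}(z_0)$.

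The main obstacle is the control of the perturbation $(A-A(\zz))\!:\!D_v^2h+B\cdot\nabla_vh$. When $\zz\in\Sigma_0$, the interior gradient estimates of Lemma~\ref{phase-grad2} degenerate and $D_v^2h$ is not uniformly bounded near $\Sigma_0$. The remedy is to exploit the explicit barrier $\Psi$ from Lemma~\ref{phi-barrier}: the weighted singular profile of $\psi$ near the grazing set is exactly compensated by the $\Ck^\alpha$ modulus of $A-A(\zz)$ together with the rescaled prefactor $\theta^\alpha$, so that a maximum-principle comparison of $e$ against a dilated version of $\Psi$ keeps the perturbative contribution integrable at the natural $\|z\|^{1/2}$ scale. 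This delicate balance between the H\"older smallness of the coefficients and the singular behaviour captured by the barrier is what permits the iteration to close at the sharp exponent $1/2$, and constitutes the central technical step of the argument.
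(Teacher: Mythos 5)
The overall strategy you outline---flatten, compare with a constant-coefficient solution $h$, invoke Lemma~\ref{phi-holder} for its $\Ck^{1/2}$ regularity, patch with Proposition~\ref{CN-bdy} away from $\Sigma_0$, and close via Lemma~\ref{technicallemma}---does match the paper's framework. But you have correctly identified the obstacle and then not resolved it: the term $(A-A(\zz)):D_v^2 h$ is singular near $\Sigma_0$, and your proposed remedy, ``a maximum-principle comparison of $e$ against a dilated version of $\Psi$,'' is not a viable plan. The barrier $\Psi$ of Lemma~\ref{phi-barrier} satisfies $\LL_0\Psi=1$ with a \emph{bounded} source; it gives no mechanism to dominate a source that blows up like $({\rm dist\;\!to\;\!}\Sigma_0)^{-3/2}$. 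You cannot simply ``dilate'' $\Psi$ to absorb that singularity, and the claim that the $\Ck^\alpha$ modulus of $A-A(\zz)$ ``exactly compensates'' the singular profile is an assertion, not an argument.

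What actually makes the perturbation controllable is a quantitative \emph{two-scale} device that is absent from your proposal. The paper works in dyadic cylinders $G^k=G_{2^{-k}}$ and introduces, for each $k$, a cutoff scale $l>k$ and the region $G^k_l=G^k\cap\{|x_d|\ge2^{-3l},\,|v_d|\ge2^{-l}\}$, which excises a small collar of thickness $2^{-l}$ around $\Sigma_0$. On the new interior boundary thus created, both $f$ and the constant-coefficient comparison $f_k$ are small of order $2^{-l/2}$, because their $\Ck^{1/2}$ seminorms (from Lemma~\ref{phi-holder}) combine with the vanishing boundary data. Inside $G^k_l$, the kinetic distance to $\Sigma_0$ is bounded below by $2^{-l}$, so Lemma~\ref{phase-grad2} gives $\|D_v^2 f_k\|\lesssim 2^{3l/2}[f_k]_{\Ck^{1/2}}$, and the H\"older modulus of $A$ contributes the factor $2^{-\alpha k}$. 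The resulting competition is $2^{-l/2}$ versus $2^{3l/2-2k-\alpha k}$, and the optimal choice $l=(1+\alpha/2)\;\!k$ balances them, giving the crucial geometric gain $2^{-\alpha k/4}$ over the trivial bound. A telescoping sum of the increments $f_k-f_{k-1}$, each re-estimated via Lemma~\ref{phi-holder}, accumulates this gain into a closed Campanato inequality at $\Sigma_0$, which is then combined with Proposition~\ref{CN-bdy} away from $\Sigma_0$ (via Lemma~\ref{patching}) and finished with Lemma~\ref{technicallemma}. Without a concrete substitute for this optimization over the intermediate scale, your argument does not close, and the barrier idea as stated will not supply one.
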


\begin{proof}
It suffices to derive the H\"older estimate at the origin $z_0=0$. Let us abbreviate the cylinder $G^k:=G_{2^{-k}}(z_0)=G_{2^{-k}}$ with $k\ge0$ so that $G^k\subset G_{R/2}$. For $l>k\ge0$, we consider the domain
\begin{align*}
G^k_l:=G^k\cap\{|x_d|\ge2^{-3l},\,|v_d|\ge2^{-l}\}, 
\end{align*}
which is away from the grazing boundary $\Sigma_0$. Let the function $f_k$ satisfy 
\begin{align*}
\left\{\begin{aligned}
\,&\LL_0f_k=0 & &{\rm in\ } G^k,\\
\,&\;f_k=f & &{\rm on\ } \partial_{\rm kin}G^k. \\
\end{aligned}\right. 
\end{align*}
Here we assume, without loss of generality, that $A(z_0)=I_d$. By Lemma~\ref{phi-holder}, we have 
\begin{align}\label{hk1}
\|f_k\|_{\Ck^{1/2}(G^k)} 
\lesssim 2^{k/2}\|f\|_{L^\infty(G_R)} . 
\end{align} 
The gradient estimate away from $\Sigma_0$, Lemma~\ref{phase-grad2}, asserts that for any $\zz\in G^k_l$ and $\theta\in(0,1)$ satisfying $G_{2\theta}(\zz)\subset G^k_l$, one has 
\begin{align}\label{Schauder-k}
\begin{aligned}
\theta^2\|D_v^2f_k\|_{L^\infty(G_\theta(\zz))} + \theta\|D_vf_k\|_{L^\infty(G_\theta(\zz))} 
&\lesssim \|f_k-f_k(\zz)\|_{L^\infty(G_{2\theta}(\zz))}\\
&\lesssim \theta^{1/2}[f_k]_{\Ck^{1/2}(G^k)}. 
\end{aligned}
\end{align} 
Besides, the function $f-f_k$ satisfies
\begin{align*}
\LL(f-f_k)=S + (\LL_0-\LL)f_k {\quad\rm in\ } G^k.
\end{align*} 
Applying the maximum principle, Lemma~\ref{max-k}, to $f-f_k$ in $G^k_l$ yields that  
\begin{align*}
\|f-f_k\|_{L^\infty(G^k_l)} \lesssim \|f-f_k\|_{L^\infty(\partial_{\rm kin}G^k_l)}  + 2^{-2k}\big(\|(\LL_0-\LL)f_k\|_{L^\infty(G^k_l)} + \|S\|_{L^\infty(G^k_l)}\big). 
\end{align*} 
By the definition of $G^k_l$, together with the condition that $f-f_k=0$ on $\partial_{\rm kin}G^k$, we obtain 
\begin{align*}
\|f-f_k\|_{L^\infty(\partial_{\rm kin}G^k_l)} \lesssim 2^{-l/2}[f-f_k]_{\Ck^{1/2}(G^k)}. 
\end{align*} 
In view of the H\"older continuity assumption on $A$ and \eqref{Schauder-k} with $\theta\approx2^{-l}$, we have 
\begin{align*}
\|(\LL_0-\LL)f_k\|_{L^\infty(G^k_l)} 
\lesssim \big(2^{3l/2-\alpha k} +2^{l/2}\big) [f]_{\Ck^{1/2}(G^k)} 
\lesssim 2^{3l/2-\alpha k} [f]_{\Ck^{1/2}(G^k)} . 
\end{align*} 
Gathering the above three estimates, we obtain
\begin{align*}
\|f-f_k\|_{L^\infty(G^k_l)} \lesssim \big(2^{-l/2} + 2^{3l/2-2k-\alpha k}\big) \|f\|_{\Ck^{1/2}(G^k)} + 2^{-2k}\|S\|_{L^\infty(G^k)}. 
\end{align*}
By taking $l=(1+\alpha/2)k$, we arrive at 
\begin{align}\label{hk2}
\|f-f_k\|_{L^\infty(G^k_l)} \lesssim 2^{-k/2-\alpha k/4}\|f\|_{\Ck^{1/2}(G^k)}  + 2^{-2k}\|S\|_{L^\infty(G^k)}. 
\end{align}

Now that $\LL_0(f_k-f_{k-1})=0$ in $G^k$, applying Lemma~\ref{max-k} and \eqref{hk2} yields that 
\begin{align*}
\|f_k-f_{k-1}\|_{L^\infty(G^k)}&= \|f_k-f_{k-1}\|_{L^\infty(\partial_{\rm kin}G^k)} = \|f-f_{k-1}\|_{L^\infty(\partial_{\rm kin}G^k)}\\
&\lesssim 2^{-k/2-\alpha k/4}\|f\|_{\Ck^{1/2}(G^k)}  + 2^{-2k}\|S\|_{L^\infty(G^k)}. 
\end{align*}
By using Lemma~\ref{phi-holder}, we obtain 
\begin{align}\label{hk3}
\begin{aligned}
\|f_k-f_{k-1}\|_{\Ck^{1/2}(G^{k+1})} 
&\lesssim 2^{k/2}\|f_k-f_{k-1}\|_{L^\infty(G^k)} \\
&\lesssim 2^{-\alpha k/4}\|f\|_{\Ck^{1/2}(G_R)}  + 2^{-3k/2}\|S\|_{L^\infty(G_R)}. 
\end{aligned}
\end{align} 
By the triangle inequality, for any fixed $k_0\in\mathbb{N}_+$, $k>k_0+1$ and $z\in G_l^{k+1}$, we have 
\begin{align*}
|f(z)|&\le |f_{k_0}(z)| + |f(z)-f_k(z)| 
+\Big|\sum\nolimits_{j=k_0+1}^k  (f_j-f_{j-1})(z)\Big|\\
&\le  [f_{k_0}]_{\Ck^{1/2}(G^{k_0})}\|z\|^{1/2} + |f(z)-f_k(z)| + \sum\nolimits_{j=k_0+1}^k [f_j-f_{j-1}]_{\Ck^{1/2}(G^{j+1})}\|z\|^{1/2}. 
\end{align*} 
Combining this with \eqref{hk1}, \eqref{hk2} and \eqref{hk3}, we derive   
\begin{align*}
|f(z)|\lesssim&\ 2^{k_0/2}\|f\|_{L^\infty(G_R)}\|z\|^{1/2} + 2^{-k/2-\alpha k/4}\|f\|_{\Ck^{1/2}(G_R)}  + 2^{-2k}\|S\|_{L^\infty(G_R)} \\
& + 2^{-\alpha k_0/4} \big(\|f\|_{\Ck^{1/2}(G_R)} + \|S\|_{L^\infty(G_R)}\big)\;\!\|z\|^{1/2} .
\end{align*} 
Let $k\in\mathbb{N}_+$ such that $k>k_0+1$ and $2^{-k-1}\le \|z\|\le 2^{-k}$. We see that for any $\|z\|\le 2^{-k_0}$, 
\begin{align*}
\|z\|^{-1/2} |f(z)|\lesssim 2^{-\alpha k_0/4}\|f\|_{\Ck^{1/2}(G_R)}
+ 2^{k_0/2}\|f\|_{L^\infty(G_R)} + \|S\|_{L^\infty(G_R)}, 
\end{align*}
whose left-hand side quantifies the $\Ck^{1/2}$ regularity of $f$ at $\Sigma_0$. In the light of Lemma~\ref{patching}, this estimate can be patched together with the one away from $\Sigma_0$ given by Proposition~\ref{CN-bdy}. Taking $k_0$ sufficiently large, we deduce that for any $0<r<R<1$ with $R-r\lesssim 2^{-k_0}$, 
\begin{align*}
\|f\|_{\Ck^{1/2}(G_r)} &\le \frac{1}{2}\;\!\|f\|_{\Ck^{1/2}(G_R)}
+ C2^{k_0/2}\|f\|_{L^\infty(G_R)} + C\|S\|_{L^\infty(G_R)}\\
& \le \frac{1}{2}\;\!\|f\|_{\Ck^{1/2}(G_R)}
+ C(R-r)^{-1/2}\|f\|_{L^\infty(G_R)} + C\|S\|_{L^\infty(G_R)}. 
\end{align*}
The desired result then follows from Lemma~\ref{technicallemma}. 
\end{proof}

\appendix
\section{Maximum principle}\label{app-max}
We recall the following standard a priori estimates as an adaptation of the classical maximum principle. The proof is included for completeness. 

Consider $G:=\Omega_{t,x}\times\Omega_v$ with Lipschitz domains $\Omega_{t,x}\subset\R\times\R^d$ and $\Omega_v\subset\R^d$, for which the kinetic boundary $G$ is given by  
\begin{align*}
\partial_{\rm kin} G=\{(t,x,v)\in\partial\Omega_{t,x}\times\Omega_v:(1,v)\cdot n_{t,x}<0\}\cup(\Omega_{t,x}\times\partial\Omega_v), 
\end{align*}
where $n_{t,x}\in\R^{1+d}$ is the unit outward normal vector at $(t,x)\in\partial\Omega_{t,x}$. 

\begin{lemma}\label{max-su}
Let $G=\{t_0-\tau<t\le t_0\}\cap G$ for some $\tau>0$. Suppose $f\in C^0(\overline{G})$ satisfies that $(\partial_t+v\cdot\nabla_x)f$ and $D_v^2 f$ are continuous in $G$, and 
\begin{align*}
\left\{\begin{aligned}
\, &\LL f\le 0 {\quad\rm in\ }G,\\
\, &\; f\le0 {\quad\rm on\ }\partial_{\rm kin} G. 
\end{aligned}\right. 
\end{align*}
Then, we have
\begin{align*}
f\le 0 {\quad\rm in\ }G.  
\end{align*}
\end{lemma}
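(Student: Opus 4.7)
The plan is the classical contradiction argument for parabolic maximum principles, adapted to the hypoelliptic operator $\LL$. Introduce the strictly positive linear-in-time barrier $\phi(t,x,v):=t-(t_0-\tau)+1\ge 1$ on $\overline G$, for which $\LL\phi\equiv 1$. For each $\epsilon>0$, set $\tilde f_\epsilon:=f-\epsilon\phi$; then
\begin{align*}
\LL\tilde f_\epsilon\le-\epsilon<0\ \text{in }G,\qquad \tilde f_\epsilon\le-\epsilon<0\ \text{on }\partial_{\rm kin}G.
\end{align*}
The compactness of $\overline G$ (the boundedness of $\Omega_{t,x}$ and $\Omega_v$ being implicit) together with $f\in C^0(\overline G)$ guarantees that $\tilde f_\epsilon$ attains its maximum on $\overline G$. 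Showing that this maximum is non-positive for every $\epsilon>0$ gives $f\le\epsilon\phi$ throughout $\overline G$, and the conclusion $f\le 0$ follows by sending $\epsilon\to 0^+$.

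Suppose, for a contradiction, that $\tilde f_\epsilon$ attains a strictly positive maximum at some $z^*=(t^*,x^*,v^*)$. The strict negativity of $\tilde f_\epsilon$ on $\partial_{\rm kin}G$ forces $z^*\in\overline G\setminus\partial_{\rm kin}G$, so $v^*$ is interior to $\Omega_v$, and $z^*$ lies in the interior of $G$, on the top face $\{t=t_0\}$, or on an outflow lateral piece $\{(t,x,v)\in\partial\Omega_{t,x}\times\Omega_v:(1,v)\cdot n_{t,x}\ge 0\}$. In all three cases, interiority of $v^*$ in $\Omega_v$ yields the velocity optimality conditions
\begin{align*}
\nabla_v\tilde f_\epsilon(z^*)=\zero,\qquad D_v^2\tilde f_\epsilon(z^*)\le 0.
\end{align*}
Moreover, the inward characteristic vector $(-1,-v^*,\zero)$ points into $G$ at $z^*$ (strictly into $\Omega_{t,x}$ on interior and outflow points, and trivially into $G$ when $z^*$ lies on $\{t=t_0\}$), so maximality of $z^*$ along this direction produces the one-sided bound $(\partial_t+v^*\cdot\nabla_x)\tilde f_\epsilon(z^*)\ge 0$. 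Assembling these sign conditions gives $\LL\tilde f_\epsilon(z^*)\ge 0$, which contradicts $\LL\tilde f_\epsilon\le-\epsilon<0$.

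The only technical subtlety is that the hypothesis supplies continuity of $(\partial_t+v\cdot\nabla_x)f$ and $D_v^2 f$ only on the open set $G$, so when $z^*\in\partial G$ the pointwise value $\LL\tilde f_\epsilon(z^*)$ and the associated derivatives are not a priori defined. I would handle this in the usual way by a limiting argument: approach $z^*$ by interior points $z_s:=z^*+s(-1,-v^*,\zero)\in G$ for $s\downarrow 0^+$, where the strict differential inequality $\LL\tilde f_\epsilon(z_s)\le-\epsilon$ and all the derivative bounds are unambiguous; maximality of $z^*$ controls the signs of the one-sided difference quotients in $s$ and in admissible $v$-directions at $z_s$; and continuity of $f$ across $\overline G$ propagates the contradiction in the limit $s\to 0^+$. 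This limiting step is the main obstacle; all remaining steps are purely algebraic, and once the contradiction is secured, letting $\epsilon\to 0^+$ completes the proof.
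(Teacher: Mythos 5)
Your proof is correct and takes essentially the same approach as the paper's: the same linear-in-$t$ penalization (your barrier $\phi$ differs from the paper's $t-t_0+\tau$ only by the additive constant $1$, which is harmless), the same contradiction via first- and second-order optimality conditions in $v$ plus the one-sided directional derivative along $(1,v^*,\zero)$ at a non-kinetic-boundary maximum, and the same final passage $\epsilon\to 0^+$. You are somewhat more explicit about the case distinction for the location of $z^*$ and about the technical point that $\LL f$ is only defined in the interior — a subtlety the paper's proof glosses over — but the underlying argument is identical.
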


\begin{proof}
Let $\varepsilon>0$. Consider the function 
\begin{align*}
f_\varepsilon(t,x,v):=f(t,x,v)- \varepsilon\,(t-t_0+\tau)
\end{align*} 
Here we remark that $t-t_0+\tau\in(0,\tau]$ whenever $(t,x,v)\in G$. We observe
\begin{align}\label{fep1}
\left\{\begin{aligned}
\,&\LL f_\varepsilon =-\varepsilon  {\quad\rm in\ } G,\\
\,&\; f_\varepsilon \le 0 {\quad\rm on\ } \partial_{\rm kin} G.
\end{aligned}\right. 
\end{align}
One argues by contradiction to establish that
\begin{align}\label{fep2}
f_\varepsilon \le 0 {\quad\rm in\ } G. 
\end{align}
Suppose, to the contrary, that the function $f_\varepsilon$ achieves its maximum over $\overline{G}$ at some point $z_1\in\overline{G}\backslash\partial_{\rm kin} G$. Then, 
\begin{align*}
\nabla_v f_\varepsilon(z_1) = 0 {\quad\rm and\quad} D_v^2f_\varepsilon(z_1)\le 0.
\end{align*}
Since $z_1\not\in\partial_{\rm kin} G$, we have 
\begin{align*}
(1,v)\cdot\nabla_{(t,x)}\,f_\varepsilon(z_1)
=\left(\partial_t+v\cdot\nabla_x\right)f_\varepsilon(z_1)\ge 0.
\end{align*}
It follows that 
\begin{align*}
\LL f_\varepsilon(z_1)\ge0, 
\end{align*}
which contradicts the first inequality in \eqref{fep1}. Therefore, \eqref{fep2} holds. Finally, sending $\varepsilon\to0$ in \eqref{fep2}, we obtain the desired result. 
\end{proof}

\begin{lemma}\label{max-k}
Let $G=\{t_0-\tau<t\le t_0\}\cap G$ for some $\tau>0$. For any solution $f$ to \eqref{KFP} subject to \eqref{Elliptic} in $G$ with $f|_{\partial_{\rm kin} G}=0$, we have
\begin{align*}
\sup\nolimits_{G}|f|\le \sup\nolimits_{\partial_{\rm kin} G}|f|+ \tau\sup\nolimits_{G}|S| {\quad\rm in\ }G.  
\end{align*}
\end{lemma}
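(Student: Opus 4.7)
The plan is to reduce Lemma~\ref{max-k} to the comparison principle for subsolutions already established in Lemma~\ref{max-su}, by subtracting off an explicit affine-in-$t$ envelope that absorbs the source term. Set $M_b:=\sup_{\partial_{\rm kin} G}|f|$ and $M_S:=\sup_{G}|S|$, and define the auxiliary functions
\begin{align*}
g_\pm(t,x,v):=\pm f(t,x,v)-M_b-M_S\,(t-t_0+\tau).
\end{align*}
Since $t\in(t_0-\tau,t_0]$ in $G$, the quantity $t-t_0+\tau$ lies in $[0,\tau]$, so the linear correction is nonnegative and bounded by $\tau M_S$.

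The key computation is that $\LL$ annihilates everything in the correction except the $\partial_t$ contribution: noting $\nabla_x(t-t_0+\tau)=0$, $\nabla_v(t-t_0+\tau)=0$ and $D_v^2(t-t_0+\tau)=0$, we get $\LL[M_S(t-t_0+\tau)]=M_S$. Combined with $\LL(\pm f)=\pm S$, this yields
\begin{align*}
\LL g_\pm=\pm S-M_S\le 0{\quad\rm in\ }G,
\end{align*}
since $|S|\le M_S$ pointwise. On $\partial_{\rm kin} G$, we have $\pm f\le M_b$ and the linear term is nonnegative, so $g_\pm\le 0$ there. Thus the hypotheses of Lemma~\ref{max-su} are satisfied for $g_\pm$, yielding $g_\pm\le 0$ throughout $G$. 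Unpacking the definitions gives $\pm f\le M_b+M_S(t-t_0+\tau)\le M_b+\tau M_S$ in $G$, which is precisely the claimed bound.

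There is no substantive obstacle: the argument is a textbook reduction, and the only point worth checking carefully is that the affine envelope is indeed admissible as a comparison function in the sense required by Lemma~\ref{max-su}, which follows from the constant-in-$(x,v)$ nature of $t-t_0+\tau$ together with the upper bound $t-t_0+\tau\le\tau$ enforced by the time-slice hypothesis $G=\{t_0-\tau<t\le t_0\}\cap G$.
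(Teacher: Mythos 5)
Your proposal is correct and follows essentially the same route as the paper's own proof: both subtract the affine-in-$t$ envelope $M_b+M_S(t-t_0+\tau)$ from $\pm f$, observe that $\LL$ applied to the resulting $g_\pm$ is nonpositive, check the sign on $\partial_{\rm kin}G$, and invoke Lemma~\ref{max-su}. The only cosmetic difference is that the paper records $t-t_0+\tau\in(0,\tau]$ rather than $[0,\tau]$; this has no bearing on the argument.
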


\begin{proof}
Consider the function 
\begin{align*}
g_\pm(t,x,v):=\pm f(t,x,v)- (t-t_0+\tau)\sup\nolimits_{G}|S|-\sup\nolimits_{\partial_{\rm kin} G}|f|, 
\end{align*}
where $t-t_0+\tau\in(0,\tau]$ in $G$. It satisfies
\begin{align*}
\left\{\begin{aligned}
\,&\LL g_\pm =\pm S -\sup\nolimits_{G}|S|\le0 {\quad\rm in\ } G,\\
\,&\; g_\pm \le 0 {\quad\rm on\ } \partial_{\rm kin} G.
\end{aligned}\right. 
\end{align*}
It then follows from the definition and Lemma~\ref{max-su} that
\begin{align*}
\pm f- \tau\sup\nolimits_{G}|S|-\sup\nolimits_{\partial_{\rm kin} G}|f| \le g_\pm\le 0 {\quad\rm in\ } G, 
\end{align*}
which implies the desired estimate. 
\end{proof}

\section{Patching lemma}\label{app-patch}
There is a well-established general standard machinery for deriving global estimates by patching together interior and boundary estimates, which we often invoke implicitly when needed. For a precise statement of this methodology, we refer to \cite[Theorem~3.1]{Wang2} and present the following reformulation for our purposes. 

\begin{lemma}\label{patching}
Let $\mathcal{S}$ be a subset of the space of continuous functions defined on the closure of a bounded domain $G\subset\R^{1+2d}$. Fix $k\in\mathbb{N}$ and $\alpha\in(0,1)$. Assume the following three properties holds. 
\begin{enumerate}[label=(\roman*), leftmargin=*]
\item\label{patch-b} There is some constant $A_b>0$ such that for any $f\in\mathcal{S}$ and $\zz\in\partial G$, there exists a polynomial $p_\zz(z)$ of kinetic degree $k$ satisfying, for any $z\in\overline{G}$, 
\begin{align*}
&\|p_\zz\|_{\Ck^k(G)}\le A_b,\\
&|f(z)-p_\zz(z)|\leq A_b\;\!\|\zz^{-1}\circ z\|^{k+\alpha}. 
\end{align*}
\item\label{patch-in} There is some $A_0>0$ and $A_1,A_2\ge1$ such that for any $f\in\mathcal{S}$ and $\zz\in G$, there exists a polynomial $p_\zz$ of kinetic degree $k$ satisfying, for any $r\in(0,\theta]$ and $Q_{2\theta}(\zz)\subset G$, 
\begin{align*}
&\sum\nolimits_{|l|\le k}\nolimits \theta^l\|D^lp_\zz\|_{L^\infty(Q_\theta(\zz))}\le A_1\|f\|_{L^\infty(Q_{2\theta}(\zz))},\\
&\,\|f-p_\zz\|_{L^\infty(Q_r(\zz)} \le \big(A_2\;\!\theta^{-k-\alpha}\|f\|_{L^\infty(Q_{2\theta}(\zz))}+A_0\big)\;\! r^{k+\alpha}.
\end{align*}
\item\label{patch-inva} For every $f\in\mathcal{S}$ and $\zz\in\partial G$, if $p_\zz$ is the polynomial given in \ref{patch-b}, then $f-p_\zz$ satisfies the estimates stated in \ref{patch-in}. 
\end{enumerate} 
Then, we have $\mathcal{S}\subset\Ck^{k+\alpha}(G)$, and there is some constant $C_k>0$ depending only $k$ such that
\begin{align*}
\|f\|_{\Ck^{k+\alpha}(G)} \leq C_k\left(A_1A_2A_b+A_0\right). 
\end{align*}
\end{lemma}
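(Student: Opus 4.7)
The plan is to construct, for each $\zz \in G$, a polynomial $P_\zz$ of kinetic degree $k$ satisfying
$$\|f - P_\zz\|_{L^\infty(Q_r(\zz) \cap G)} \lesssim (A_1 A_2 A_b + A_0)\, r^{k+\alpha}$$
for every $r > 0$, which by definition of $\Ck^{k+\alpha}$ yields the stated global bound. The construction merges a single boundary polynomial from \ref{patch-b} with a local interior polynomial from \ref{patch-in}, using the compatibility hypothesis \ref{patch-inva} as the bridge.

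Fix $\zz \in G$ and choose $\zz_0 \in \partial G$ minimizing the kinetic gauge $d := \|\zz_0^{-1} \circ \zz\|$. Select $\theta = c\, d$ with $c \in (0,1)$ small enough that $Q_{2\theta}(\zz) \subset G$. Let $p_{\zz_0}$ be the boundary polynomial given by \ref{patch-b}, set $g := f - p_{\zz_0}$, and apply \ref{patch-in} to $g$ at $\zz$ (legitimate by \ref{patch-inva}) to obtain an interior polynomial $q_\zz$. Take $P_\zz := p_{\zz_0} + q_\zz$, which is of kinetic degree at most $k$.

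The verification splits by scale. For $r \le \theta$, the approximation estimate from \ref{patch-in} yields
$$\|f - P_\zz\|_{L^\infty(Q_r(\zz))} = \|g - q_\zz\|_{L^\infty(Q_r(\zz))} \le \big(A_2\, \theta^{-k-\alpha} \|g\|_{L^\infty(Q_{2\theta}(\zz))} + A_0\big) r^{k+\alpha},$$
and the triangle inequality for the kinetic gauge combined with \ref{patch-b} produces $\|g\|_{L^\infty(Q_{2\theta}(\zz))} \lesssim A_b (d + \theta)^{k+\alpha} \lesssim A_b\, \theta^{k+\alpha}$, which absorbs the $\theta^{-k-\alpha}$ factor and leaves $\lesssim (A_b A_2 + A_0)\, r^{k+\alpha}$. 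For $r > \theta$, the triangle inequality
$$\|f - P_\zz\|_{L^\infty(Q_r(\zz) \cap G)} \le \|f - p_{\zz_0}\|_{L^\infty(Q_r(\zz) \cap G)} + \|q_\zz\|_{L^\infty(Q_r(\zz))}$$
handles the two terms separately: the first is $\lesssim A_b\, r^{k+\alpha}$ by \ref{patch-b} (using $d \le r$), and for the second we invoke the derivative bound in \ref{patch-in} to obtain $\|q_\zz\|_{L^\infty(Q_\theta(\zz))} \lesssim A_1\, \|g\|_{L^\infty(Q_{2\theta}(\zz))} \lesssim A_1 A_b\, \theta^{k+\alpha}$, then extend to $Q_r(\zz)$ using that a polynomial of kinetic degree $k$ satisfies $\|q_\zz\|_{L^\infty(Q_r(\zz))} \lesssim (r/\theta)^k \|q_\zz\|_{L^\infty(Q_\theta(\zz))}$, yielding $\lesssim A_1 A_b\, \theta^\alpha r^k \le A_1 A_b\, r^{k+\alpha}$.

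The main technical subtlety is the polynomial extension between the two regimes, which relies on a Bernstein-Markov-type inequality adapted to the kinetic scaling; the balance $\theta \approx d$ is precisely what allows the boundary and interior approximations to be sharp simultaneously. For $\zz$ well inside $G$, where $d$ is comparable to the diameter of $G$, the construction degenerates to a direct application of \ref{patch-in} with $\theta$ comparable to the diameter, which still yields the claimed bound. Finally, the case $r \gtrsim \mathrm{diam}(G)$ is trivial since $Q_r(\zz) \cap G = G$ and the resulting bound on $\|f - P_\zz\|_{L^\infty(G)}$ is absorbed into $r^{k+\alpha}$ through the constant $C_k$.
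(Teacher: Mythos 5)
Your proposal is correct and follows essentially the same approach as the paper's proof: anchor at a nearest boundary point $\zz_0$, apply property \ref{patch-b} there, invoke properties \ref{patch-inva} and \ref{patch-in} for $g = f - p_{\zz_0}$ at $\zz$ with scale $\theta \approx \|\zz_0^{-1}\circ\zz\|$, and then split into the regimes $r \le \theta$ and $r > \theta$. The only cosmetic difference is that in the regime $r > \theta$ you compress the polynomial-extension step into a single Bernstein--Markov factor $(r/\theta)^k$, whereas the paper keeps the intermediate sum $\sum_{|l| \le k} r^l \theta^{k+\alpha-l}$ coming directly from the derivative bound in \ref{patch-in}; these are equivalent and give the same $A_1 A_b\, r^{k+\alpha}$ bound.
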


\begin{proof}
It suffices to establish the estimate at interior points of $G$; otherwise, if $\zz\in\partial G$, property~\ref{patch-b} yields the desired bound directly. We hence focus on $\zz\in G$ for which $Q_{2\theta}(\zz)\subset G$ for some $\theta\approx\|\zz^{-1}\circ z_0\|$ and some $z_0\in\partial G$. 

Applying property~\ref{patch-b} at $z_0$ produces a polynomial $p_{z_0}$ such that   
\begin{align}\label{fpz0}
\|f-p_{z_0}\|_{L^\infty(Q_{2\theta}(\zz))}\lesssim A_b\;\!\theta^{k+\alpha}. 
\end{align}
It then follows from properties~\ref{patch-inva} and \ref{patch-in} at $\zz$ that there exists a polynomial $p_\zz$ such that for any $r\in(0,\theta]$, 
\begin{align*}
\|f-p_{z_0}-p_\zz\|_{L^\infty(Q_r(\zz))} 
&\le \big(A_2\;\!\theta^{-k-\alpha}\|f-p_{z_0}\|_{L^\infty(Q_{2\theta}(\zz))}+A_0\big)\;\!r^{k+\alpha}\\
&\lesssim (A_2A_b +A_0)\;\! r^{k+\alpha}. 
\end{align*}
For the complementary case $r\ge\theta$, we know from property~\ref{patch-b} that 
\begin{align*}
&\|f-p_{z_0}\|_{L^\infty(Q_r(\zz))}\lesssim A_b\;\!r^{k+\alpha},\\
&\|f-p_{z_0}\|_{L^\infty(Q_{2\theta}(\zz))}\lesssim A_b\;\!\theta^{k+\alpha}.
\end{align*}
In view of the estimate for $p_\zz$ from property~\ref{patch-in}, we have
\begin{align*}
\|p_\zz\|_{L^\infty(Q_r(\zz))} 
&\lesssim \sum\nolimits_{|l|\le k}\nolimits r^l\;\! \|D^lp_\zz\|_{L^\infty(Q_\theta(\zz))}\\
&\lesssim A_1\sum\nolimits_{|l|\le k}\nolimits r^l\;\! \theta^{-l}\;\!\|f-p_{z_0}\|_{L^\infty(Q_{2\theta}(\zz))}. 
\end{align*}
The combination of the above three estimates then implies that for $r\ge\theta$,  
\begin{align*}
\|f-p_{z_0}-p_\zz\|_{L^\infty(Q_r(\zz))} 
&\le \|f-p_{z_0}\|_{L^\infty(Q_r(\zz))} + \|p_\zz\|_{L^\infty(Q_r(\zz))} \\
&\lesssim A_b\;\!r^{k+\alpha} + A_1A_b\sum\nolimits_{|l|\le k}\nolimits r^l\;\!\theta^{k+\alpha-l}
\lesssim A_1A_b\;\!r^{k+\alpha}. 
\end{align*}
We thus conclude the proof. 
\end{proof}

\end{document}